\newtheorem{theorem}{Theorem}
\newtheorem{fact}{Fact}
\newtheorem{lemma}{Lemma}
\newtheorem{corollary}{Corollary}
\theoremstyle{definition}
\newtheorem{example}{Example}
\newtheorem{remark}{Remark}
\newtheorem{problem}{Problem}
\newcommand{\ord}{{\mathrm{ ord}}\,}
\newcommand{\modd}{\,{\mathrm{ mod}}\,}
\newcommand{\dd}{{\mathrm{ d}}}
\newcommand{\id}{{\mathrm{ id}}}
\newcommand{\ps}[2]{\boldsymbol{#1}[\hspace{-0.05cm}[{#2}]\hspace{-0.05cm}]}
\newcommand{\pol}[2]{\boldsymbol{#1}[{#2}]}
\newcommand{\G}[1]{\mbox{\gothic{#1}}}
\newfont{\gothic}{eufm9 scaled 1100}
\begin{document}
\title{An algebraic structure of groups of~solutions of~the third Acz{\'e}l-Jabotinsky formal differential equation}
\author[W. Jab{\l}o{\'n}ski]{Wojciech Jab{\l}o{\'n}ski}
\address{Faculty of Applied Mathematics, AGH University of Science and Technology, Mickiewicza 30, 30-059 Krak{\'o}w, POLAND}
\email{wjablonski@agh.edu.pl}
\author[L. Reich]{Ludwig Reich}
\address{Institute of Mathematics, Karl-Franzens-University Graz,
                Heinrichstrasse 36, A-8010 Graz, AUSTRIA}
\email{ludwig.reich@kfunigraz.ac.at} \keywords{formal power series, translation equation, Acz{\'e}l-Jabotinsky differential equation} \subjclass[2020]{Primary: 13F25, 34M25; Secondary: 13J05, 13H05}
\date{}
\begin{abstract}
We study the algebraic structure of the groups of solutions of the third Acz{\'e}l-Jabotinsky differential equation
$$
(H\circ \Phi)(x)=\frac{\dd\Phi}{\dd x}\cdot H(x)
$$
in the rings of formal power series and truncated formal power series $\ps{k}{x}_s$, where $s$ is either a positive integer or $s=\infty$ and $H\in\ps{\boldsymbol{k}}{x}_s$ is given. We give also the detailed description of all  solutions of this equation.
\end{abstract}

\maketitle

\pagestyle{myheadings} \markboth{W. Jab{\l}o{\'n}ski and L.
Reich}{Iterations groups in formal power series and ... }

%\tableofcontents

\section{Introduction}

Let $\boldsymbol{k}$ be a field of characteristic~$0$ with the unit~$1$ and let $\boldsymbol{q}\subset\boldsymbol{k}$ be a simple field (which is clearly isomorphic to the field of all rationals). The problem of determining of one-parameter iteration groups $(F_t)_{t\in \boldsymbol{k}}$ of invertible formal power series over~$\boldsymbol{k}$ leads to the famous translation equation
\begin{equation}\label{tre}
\left\{\begin{array}{l}
F_{t_1+t_2}=F_{t_1}\circ F_{t_2}\qquad\mbox{ for }\,t_1,t_2\in \boldsymbol{k},\\
F_1=\id.
\end{array}\right.
\end{equation}
Since $(\boldsymbol{k},+)$ is commutative, so any itaration group $(F_t)_{t\in \boldsymbol{k}}$ is a family of commuting formal power series (see~\cite{Reich2}), that is it satisfies
\begin{equation}\label{comm}
F_{t_1}\circ F_{t_2}=F_{t_2}\circ F_{t_1}\qquad\mbox{ for }\,t_1,t_2\in \boldsymbol{k}.
\end{equation}
If $F_t$ is in some sense differentiable with respect to the time-variable~$t$, then differentiating~\eqref{comm} with respect to~$t_1$ we get
$$
\frac{\partial F_{t_1}}{\partial t}(F_{t_2}(x))=\frac{\partial F_{t_2}}{\partial x}(F_{t_1}(x))\cdot\frac{\partial F_{t_1}}{\partial t}(x).
$$
Putting $t_1=0$ and next $t_2=1$, with $H:=\frac{\partial F_0}{\partial t}$ and $\Phi:=F_1$, we obtain the third Acz{\'e}l-Jabotinsky differential equation
$$
(H\circ\Phi)(x)=\frac{\dd\Phi}{\dd x}(x)\cdot H(x).
\leqno{({\mathrm{AJ}}_{\infty}(H))}
$$
The group $\Gamma^{\infty}\subset\ps{k}{x}$ of invertible formal power series is the projective limit of the sequence $(\Gamma^s)_{s\in\mathbb{N}}$ of groups $\Gamma^s\subset\ps{k}{x}_s$ of invertible truncated formal power series. One can therefore ask for the form of one-parameter iteration groups $(F^{[s]}_t)_{t\in G}$ invertible truncated formal power series, and also here we can derive the Acz{\'e}l-Jabotinsky differential equation
$$
(H\circ\Phi)(x)=\frac{\dd\Phi}{\dd x}(x)\cdot H(x) \quad \mod x^{s+1}\leqno{({\mathrm{AJ}}_s(H))}
$$
in the ring $\ps{k}{x}_s$ of $s$-truncated formal power series. We denote by ${\mathcal{S}}({\mathrm{AJ}}_s(H))$ the set of all solutions of~$({\mathrm{AJ}}_s(H))$, where $s\in\mathbb{N}\cup\{\infty\}$. Put $\ps{k}{x}_{\infty}:=\ps{k}{x}$.

For a fixed positive integer $n$ by $\boldsymbol{E}_n\subset\boldsymbol{k}^*:=\boldsymbol{k}\setminus\{0\}$ we denote the set of all roots of the unit~$1\in\boldsymbol{k}$, that is the set of all roots of the polynomial $x^n-1\in\boldsymbol{k}[x]$.

Let us fix $s\in\mathbb{N}\cup\{\infty\}$ and $H(x)=\sum_{j=l+1}^sh_jX^j\in\ps{k}{x}_s$, where $l\in\mathbb{N}\cup\{0\}$ is a non-negative integer and $h_{l+1}\neq0$. It is a simple observation that $\Phi(x)$ is a solution of $({\mathrm{AJ}}_s(H))$ if and only if $\Phi(x)$ is a solution of~$({\mathrm{AJ}}_s(\frac{1}{h_{l+1}}H))$. Hence, without loss of generality we assume that $H(x)=x^{l+1}+\sum_{j=l+2}^s h_jX^j$. We begin with general properties of the set of solutions~${\mathcal{S}}({\mathrm{AJ}}_s(H))$. We prove that ${\mathcal{S}}({\mathrm{AJ}}_s(H))$ is a group with respect to the substitution. Further on, we solve~$({\mathrm{AJ}}_s(H))$ considering two general cases:
\begin{description}
\item[case 1] $l=1$,
\item[case 2] $l\geq2$.
\end{description}
In the first case we give two representations of solutions $\Phi(x)=c_1x+\sum_{j=2}^sc_jx^j\in{\mathcal{S}}({\mathrm{AJ}}_s(H))$. These representations allow us to prove that the group ${\mathcal{S}}({\mathrm{AJ}}_s(H))$ is commutative and isomorphic to the multiplicative group $(\boldsymbol{k}^*,\cdot)$. The second case is much more complicated. Firstly, for $s=\infty$ we have two parameter representation of the group~${\mathcal{S}}({\mathrm{AJ}}_{\infty}(H))$. Also here the group of solutions ${\mathcal{S}}({\mathrm{AJ}}_{\infty}(H))$ is commutative.
For $s\in\mathbb{N}$ solutions of~${\mathrm{AJ}}_s(H))$ depend on some additional free parameters. Contrary to previous cases, the group ${\mathcal{S}}({\mathrm{AJ}}_s(H))$ is not commutative for $l\geq3$.

\section{The rings of formal power series and truncated formal power series}
\label{ringfps}

In the ring $\ps{k}{x}$ of formal power series $\sum_{j=0}^{\infty}c_jx^j$ over $\boldsymbol{k}$ we define the order of any formal power series by
$\ord\left(\sum_{j=0}^{\infty}c_jx^j\right)= \min\{j\in\mathbb{Z}:j\geq0, c_j\neq0\,\}$, with the assumption $\min\emptyset=\infty$. In the set $x\ps{k}{x}$ of formal power series $f$ with $\ord f\geq1$ we define a substitution as follows, if $f(x)=\sum_{j=1}^{\infty}c_jx^j, g(x)=\sum_{j=1}^{\infty}d_jx^j\in\ps{k}{x}$ then
$$
(f\circ g)(x)=\sum_{j=1}^{\infty}c_j\left(\sum_{l=1}^{\infty}d_lx^l\right)^j.
$$
Then the set $\Gamma^{\infty}=\{f(X)\in\ps{k}{X}:\ord{f(X)}=1\,\}$ is a group under substitution $\circ$ with the unit $L_1(x)=x=\id$.

Fix a positive integer $s\in\mathbb{N}$. A ring $\ps{k}{x}_s$ of $s$-truncated formal power series is the quotient ring $\ps{k}{X}/\G{m}^{s+1}$ where $\G{m}=(x)=x\ps{k}{x}=\{f\in\ps{k}{x}:\ord f \geq 1\,\}$ and $\G{m}^{s+1}=(x)^{1+1}=x^{s+1}\ps{k}{x}=\{f\in\ps{k}{x}:\ord f \geq s+1\,\}$. To each coset $f+\G{m}^{s+1}$ with $f(x)=\sum_{j=0}^{\infty}c_jX^j\in\ps{k}{x}$ we associate the $s$-truncation of $f$ given by
$$
f^{[s]}(x):=\sum_{j=0}^sc_jx^j\in\ps{k}{x}_s\subset
\pol{k}{x}\subset\ps{k}{x}.
$$
In so defined $\ps{k}{x}_s$ we introduce operations of addition, multiplication and substitution in the following way: for $f_1,f_2\in\ps{k}{x}_s$ we put $(f_1+f_2)(x):=f_1(x)+f_2(x)$, $(f_1\cdot f_2)(x):=(f_1\cdot f_2)^{[s]}(x)$ and $(f_1\circ f_2)(x):=(f_1\circ f_2)^{[s]}(x)$. Also here the set $\Gamma^s=\{f\in\ps{k}{x}_s:\ord f=1\,\}$ is a group under substitution and with the unit $L_1$.

As one can see, the ring $\ps{k}{x}_{\infty}:=\ps{k}{x}$ is a projective limit of the sequence of the rings $(\ps{k}{x}_s)_{s\in\mathbb{N}}$. Projections $\pi^r_s:\ps{k}{x}_r\to\ps{k}{x}_s$ given by
$$
\pi^r_s\left(\sum_{j=0}^rc_jx^j\right)=\sum_{j=0}^sc_jx^j
$$
for $r\in\mathbb{N}\cup\{\infty\}$ and $s\in\mathbb{N}$, are cononical rings epimorphisms and they satisfy $\pi^p_q\circ\pi^q_r=\pi^p_r$ for $p\geq q\geq r$. Similarly, the group $\Gamma^{\infty}$ is a projective limit of the sequence $(\Gamma^s)_{s\in\mathbb{N}}$ with cononical groups epimorphisms $\pi^r_s:\Gamma^r\to\Gamma^s$ for $r\in\mathbb{N}\cup\{\infty\}$ and $s\in\mathbb{N}$.

Let us fix $s\in\mathbb{N}\cup\{\infty\}$. The invert and the $n$-th power of $f\in\Gamma^s$ with respect to the substitution we denote by $f^{\circ-1}$ and $f^{\circ n}$, respectively. By a semicanonical form of order $l\in\mathbb{N}$ of a~formal power series in $\Gamma^s$ we mean every $f\in\Gamma^s$ such that $f(x)=\sum_{j=0}^rc_{jl+1}x^{jl+1}$, where $r$ is either the last positive integer with $rl+1\leq s$ if $s<\infty$ or $r=\infty$, otherwise. Denote by ${\mathcal{N}}^{s,l}$ the family of all such $f\in\Gamma^s$. If $c\in\boldsymbol{E}_l$ is primitive of order $l$ then
$$
{\mathcal{N}}^{s,l}=\left\{f\in\Gamma^s:f\circ L_c=L_c\circ f\,\right\},
$$
thus ${\mathcal{N}}_l^s$ is a subgroup of $\Gamma^s$. Let $L_c(x)=cx$ for $c\in\boldsymbol{k}^\star$ dente the linear form. For fixed $l\in\mathbb{N}$ we put
$$
{\mathcal{L}}=\{L_c: c\in\boldsymbol{k}^\star\,\}\quad\mbox{ and }\quad {\mathcal{L}}_l=\{L_c\in{\mathcal{L}}: c\in\boldsymbol{E}_l\,\}.
$$
Clearly ${\mathcal{L}}_l\subset{\mathcal{L}}\subset\Gamma^s$ for every $s\in\mathbb{N}\cup\{\infty\}$ and they are subgroups of $\Gamma^s$. Let us consider the following subgroups of $\Gamma^s$:
$$
%\begin{array}{rcl}
\Gamma_1^s=\ker\pi^s_1=\{f\in \Gamma^s: f(x)=x\mod x^2\,\},\;
{\mathcal{N}}^{s,l}_l=\{f\in\mathcal{N}^{s,l}: \pi^s_1f\in{\mathcal{L}}_l\,\}.
%\end{array}
$$
Clearly $\Gamma^s_1$ is normal in $\Gamma^s$, but ${\mathcal{L}}_m$ and ${\mathcal{L}}$ are not normal for $s\geq2$. Furthermore, ${\mathcal{L}}\cap\Gamma^s_1=\{L_1\}$ and if $f\in\Gamma^s$ with $\pi^s_1f=L_c$ for some $c\in\boldsymbol{k}^*$, then $f=L_c\circ(L_{c^{-1}}\circ f)$ and $L_{c^{-1}}\circ f\in\Gamma^s_1$, so ${\mathcal{L}}\circ\Gamma^s_1:=\{L\circ f:L\in\mathcal{L}^s, f\in\Gamma^s_1\,\}=\Gamma^s$, hence $\Gamma^s={\mathcal{L}}\overline{\circledcirc}\Gamma^s_1$ is a~semidirect product of the subgroup ${\mathcal{L}}$ and the normal subgroup
$\Gamma^s_1$. On the other hand, both ${\mathcal{L}}_l$ and ${\mathcal{N}}^{s,l}_1=\ker\pi^s_1|_{\mathcal{N}^{s,l}_l} \subset{\mathcal{N}}^{s,l}$ are normal divisors in ${\mathcal{N}}^{s,l}_m$,  so ${\mathcal{N}}^{s,l}_l={\mathcal{L}}_m\circledcirc {\mathcal{N}}^{s,l}_1$ is a direct product of the subgroups ${\mathcal{L}}_l$ and ${\mathcal{N}}^{s,l}_1$.

\section{A Description and properties of the substitution in~$\ps{k}{x}$ and $\ps{k}{x}_s$}
\label{propert}

In order to prove the the form of solution of the Acz{\'e}l-Jabotinsky formal differential equation we have to justify a detailed description of the substitution law in the rings~$\ps{k}{x}$ as well as~$\ps{k}{x}_s$. Any positive integer $n$ we can treate as $n\cdot1\in\boldsymbol{q}\subset\boldsymbol{k}$. For fixed integers $i$ and $j$ let $|i,j|$ be a set of all integers $n$ such that $i\leq n\leq j$, and let $|i,\infty|$ denote the set of all integers $n\geq i$. We assume additionally that $\sum_{t\in\emptyset}a_t=0$ and $\prod_{t\in\emptyset}a_t=1$. The formula which describes the substitution~$\circ$ follows directly form the following lemma.

\begin{lemma}[multinomial theorem]\label{f3}
Let $l,m$ be positive integers and let $x_1,\ldots,x_m$ be independent indeterminates. Then
$$
\left(x_1+x_2+\ldots+x_m\right)^l=
\sum_{\substack{\overline{u}_m=(u_1,\ldots, u_m)\in|0,l|^m,\\ u_1+\ldots +u_m=l}}B_{\overline{u}_m}^l
\prod_{j=1}^mx_j^{u_j},
$$
where $B_{\overline{u}_m}^l=\frac{l!}{\prod\limits_{j=1}^m u_j!}\in\boldsymbol{n}:=\{n\cdot1\in\boldsymbol{k}:n\in\mathbb{Z},\,n\geq1\,\}$.
\end{lemma}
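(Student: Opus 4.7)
The natural strategy is induction on the number $m$ of indeterminates, with the binomial theorem (the case $m=2$) as the base. For $m=1$ the formula reduces to $x_1^l = x_1^l$ with $B_{(l)}^l = l!/l! = 1$, and for $m=2$ I would give a brief separate induction on $l$: assume $(x_1+x_2)^l = \sum_{k=0}^l \frac{l!}{k!(l-k)!}\,x_1^k x_2^{l-k}$, multiply by $(x_1+x_2)$, shift one summation index, and use the Pascal identity $\frac{l!}{k!(l-k)!}+\frac{l!}{(k-1)!(l-k+1)!} = \frac{(l+1)!}{k!(l+1-k)!}$ to conclude the step for $l+1$.

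For the inductive step from $m$ to $m+1$, I would group the last indeterminate and apply the binomial theorem:
$$
(x_1+\ldots+x_{m+1})^l = \sum_{k=0}^l \frac{l!}{k!(l-k)!} (x_1+\ldots+x_m)^k\, x_{m+1}^{l-k}.
$$
Invoking the inductive hypothesis on $(x_1+\ldots+x_m)^k$ and setting $u_{m+1} := l-k$, the constraint $u_1+\ldots+u_m = k$ becomes $u_1+\ldots+u_{m+1} = l$. The coefficient rearranges cleanly as
$$
\frac{l!}{k!(l-k)!}\cdot\frac{k!}{\prod_{j=1}^m u_j!} = \frac{l!}{u_{m+1}!\prod_{j=1}^m u_j!} = \frac{l!}{\prod_{j=1}^{m+1}u_j!} = B_{\overline{u}_{m+1}}^l,
$$
which is exactly the desired multinomial coefficient, so the formula propagates to $m+1$.

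It remains to note that $B_{\overline{u}_m}^l \in \boldsymbol{n}$. Since all the $u_j$ are nonnegative integers summing to $l$, the quotient $\frac{l!}{\prod_j u_j!}$ is a nonnegative integer by the above proof (each coefficient in the expansion is a sum of $1$'s coming from choices of which factor contributes which $x_j$), and it is positive because the term corresponding to any admissible tuple $\overline{u}_m$ is actually realized. Under the canonical embedding $\mathbb{Z}_{>0} \hookrightarrow \boldsymbol{q} \subset \boldsymbol{k}$, this places $B_{\overline{u}_m}^l$ in $\boldsymbol{n}$ as claimed.

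The main obstacle is not conceptual but notational: keeping the index set $\{(u_1,\ldots,u_m) \in |0,l|^m : u_1+\ldots+u_m = l\}$ consistent with the convention $\sum_{t\in\emptyset} = 0$ and $\prod_{t\in\emptyset} = 1$ in the degenerate cases ($m=1$, or $k=0$ in the inductive step where all $u_1,\ldots,u_m$ vanish). Once one is careful that these boundary contributions match the claimed formula, the induction is entirely routine.
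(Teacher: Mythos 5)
Your induction is correct: the base cases and the step from $m$ to $m+1$ via the binomial theorem, with the coefficient identity $\binom{l}{k}\cdot\frac{k!}{\prod_{j=1}^m u_j!}=\frac{l!}{\prod_{j=1}^{m+1}u_j!}$, are exactly the standard argument, and you rightly flag that the inductive hypothesis must be invoked for every exponent $k\in|1,l|$ while the boundary case $k=0$ (all $u_j=0$, contributing the single term $x_{m+1}^l$) is handled separately. The paper itself offers no proof of this lemma --- it is stated as the classical multinomial theorem --- so your proof fills that in with the expected routine argument and there is nothing in the paper to diverge from.
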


Applying Lemma~\ref{f3} it is easily to check that if $\sum_{i=1}^{\infty}a_ix^i,\sum_{i=1}^{\infty}b_i
x^i\in\ps{k}{x}$, then
$$
\sum_{i=1}^{\infty}a_i\left(\sum_{j=1}^{\infty}b_jx^j\right)^i=
\sum_{n=1}^{\infty}d_nx^n,
$$
if and only if
\begin{equation}
\label{7}
d_n=\sum_{i=1}^na_i\sum_{\overline{u}_n \in U_{n,i}}
B_{\overline{u}_n} \prod_{j=1}^n b_j^{u_j} \qquad \mbox{ for }\,n\in|1,\infty|,
\end{equation}
where
$$
U_{n,i}=\left\{\overline{u}_n=(u_1,\ldots,u_n) \in|0,i|^n:
\sum_{j=1}^nu_j=i, \,\sum_{j=1}^nju_j=n \right\},
\,B_{\overline{u}_n}=\frac{i!}{\prod\limits_{j=1}^n u_j!}\in\boldsymbol{n}.
$$
In the ring $\ps{k}{x}_s$ of truncated formal power series, for $\sum_{i=1}^sa_ik^i,\sum_{i=1}^sb_ix^i\in\ps{k}{x}_s$ we have
$$
\sum_{i=1}^sa_i\left(\sum_{j=1}^sb_jx^j\right)^i=
\sum_{n=1}^sd_nx^n \qquad \mod x^{s+1}
$$
if and only if $d_n$ for $n\in|1,s|$ are given by~\eqref{7}. As examples of~\eqref{7} we have (cf.~\cite{JablReich2005})
\begin{equation}\label{8}
%\begin{array}{l}
d_1=a_1b_1,\quad d_2=a_1b_2+a_2b_1^2,\quad d_3=a_1b_3+2a_2b_1b_2+a_3b_1^3.
%\end{array}
\end{equation}

To prove our result we need some particular and detailed properties of the operation of substitution described by the formula~\eqref{7}. We collect them here for the convenience of the Reader and we mention that they are  consequences of the equalities, which describe the sets~$U_{n,i}$ in~\eqref{7}.

\begin{lemma}\label{l3}~

\begin{enumerate}[(i)]
\item For $n\geq2$ we have $U_{n,1}=\{(0,\ldots,0,1)\}$ and $U_{n,n}=\{(n,0,\ldots,0)\}$. Then $B_{(0,\ldots,0,1)}=B_{(n,0,\ldots,0)}=1$.
\item If\/ $n\geq3$, $i\in|2,n|$ and $\overline{u}_n\in U_{n,i}$ then $u_j=0$ for each $j\in|n-i+2,n|$.
\item If\/ $n\geq3$ then $U_{n,n-1}=\{(n-2,1,0,\ldots,0)\}$ and  $B_{(n-2,1,0,\ldots,0)}=n-1$.
\item For $n\geq4$ we have $U_{n,n-2}=\{(n-4,2,0,\ldots,0),(n-3,0,1,0,\ldots,0)\}$. Then $B_{(n-4,2,0,\ldots,0)}=\frac{(n-3)(n-2)}{2}$ and $B_{(n-3,0,1,0,\ldots,0)}=n-2$.
\item If\/ $n\geq4$, $i\in|2,n-1|$ and $\overline{u}_n=(u_1,\ldots,u_n)\in U_{n,i}$, then $i-u_1\geq1$.
\end{enumerate}
\end{lemma}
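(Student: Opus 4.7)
The plan is to derive each item directly from the two defining constraints
$\sum_{j=1}^n u_j = i$ and $\sum_{j=1}^n j u_j = n$ of $U_{n,i}$, together with the non-negativity of the $u_j$. Subtracting the first relation from the second produces the auxiliary identity
$$
\sum_{j=1}^n (j-1) u_j = n-i,
$$
which will be the workhorse throughout. The multinomial coefficients $B_{\overline{u}_n}$ in each part will then be computed straight from the definition $B_{\overline{u}_n}=i!/\prod_j u_j!$.

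For (i), the condition $\sum u_j = 1$ forces exactly one coordinate to equal $1$ and the rest to vanish; then $\sum j u_j = n$ pins its index at $j=n$. For $U_{n,n}$ the auxiliary identity reads $\sum (j-1)u_j = 0$, so $u_j = 0$ for every $j\ge 2$ and $u_1 = n$. For (ii), I would argue by contradiction: if $u_j \ge 1$ for some $j \ge n-i+2$, then $(j-1)u_j \ge n-i+1$, and since every remaining term $(k-1)u_k$ is non-negative, the auxiliary identity is violated. Item (v) is an analogous contradiction argument: if $u_1 = i$ then $\sum u_j = i$ forces $u_j = 0$ for $j \ge 2$, whence $\sum j u_j = u_1 = i \le n-1 < n$, contradicting membership in $U_{n,i}$.

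The statements (iii) and (iv) are quick corollaries of (ii). In (iii) with $i = n-1$, (ii) forces $u_j = 0$ for $j \ge 3$, so the defining constraints reduce to the linear system $u_1 + u_2 = n-1$ and $u_1 + 2u_2 = n$, with the unique non-negative solution $(n-2,1,0,\ldots,0)$. In (iv) with $i = n-2$, (ii) restricts us to $u_1,u_2,u_3$ and the system becomes $u_1+u_2+u_3 = n-2$, $u_1+2u_2+3u_3 = n$, i.e.\ $u_2 + 2u_3 = 2$, whose only non-negative integer solutions are $(u_2,u_3) \in \{(2,0),(0,1)\}$. Reading off the two tuples and computing the multinomial coefficients finishes the proof. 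The only real obstacle is bookkeeping — keeping the index bounds in (ii) tight enough to make (iii) and (iv) immediate, and handling the small cases where $n$ just meets the stated lower bound — but no genuine mathematical difficulty is expected.
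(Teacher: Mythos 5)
Your proof is correct and follows exactly the route the paper intends: the paper states Lemma~\ref{l3} without proof, remarking only that the items are consequences of the defining equalities $\sum_j u_j=i$, $\sum_j ju_j=n$ of $U_{n,i}$, and your verification via the auxiliary identity $\sum_j(j-1)u_j=n-i$ together with the direct computation of the coefficients $B_{\overline{u}_n}$ is precisely that argument. Nothing is missing.
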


Within studying solutions of Acz{\'e}l-Jabotinsky differential equation we frequently use the particular form of~\eqref{7}. It is a consequence of  Lemma~\ref{l3}~{\em (i)} and~{\em (ii)}, namely we have
\begin{equation}\label{dn}
d_n=a_1b_n +\sum_{i=2}^{n-1} a_i \sum_{\overline{u}_n \in
U_{n,i}} B_{\overline{u}_n} b_1^{u_1}
\prod_{j=2}^{n-i+1}b_j^{u_j}+a_nb_1^n\qquad\mbox{ for }\,n\geq2.
\end{equation}

For fixed integer $l\geq1$, let ${\mathbb{N}}_m$ denote the set of all positive integers $j$ such that $j\equiv1\modd l$. Clearly $\mathbb{N}_1=\mathbb{N}$. For $n\geq l+2$ we put (see Lemma~\ref{l3}~{\em (ii)})
$$
\begin{array}{rcl}
{\widehat{U}}_{n,i}^{l+1}&=&\left\{\overline{u}_n\in U_{n,i}:u_j=0\;\;\mbox{ for }\,j\in|2,l|\cup|n-i+2,n|\,\right\},\\[1ex]
U_{n,i}^m&=&\{\overline{u}_n\in \widehat{U}^{l+1}_{n,i}:u_j=0\;\;
\mbox{ for }\,j\in|l+2,n-i+1|\setminus\mathbb{N}_l\,\}\\[1ex]
&=&\left\{\overline{u}_n\in U_{n,i}:u_j=0\;\;
\mbox{ for }\,j\in(|2,n-i+1|\setminus\mathbb{N}_l)\cup|n-i+2,n|\,\right\}.
\end{array}
$$
Note that $\widehat{U}^2_{n,i}=U^2_{n,i}=U_{n,i}$. We prove now several further properties of~\eqref{dn}. Because of their generality, we will always assume that $l\geq1$ even if for $l=1$ these facts are either obvious or empty satisfied. If $n\geq l+2$ and $i\in|2,n-l|$, we set
$$
\begin{array}{rcl}
\overline{U}^{l+1}_{n,i}&=&\{\overline{u}_n\in \widehat{U}^{l+1}_{n,i}:u_j=0\,\mbox{ for }\,j\in|n-i+1,n|\,\}\\[1ex]
&=&\left\{\overline{u}_n\in U_{n,i}:u_j=0\,\mbox{ for }\,j\in|2,l|\cup|n-i+1,n|\,\right\}.
\end{array}
$$

From equalities describing sets $U_{n,i}$ we derive the following lemma.

\begin{lemma}\label{l3x}
Fix $l\geq1$ and $n\geq l+2$.
\begin{enumerate}[(i)]
\item If\/ $i\in|n-l+1,n-1|$ then $\widehat{U}^{l+1}_{n,i}=\emptyset$, hence also  $U^{l+1}_{n,i}=\emptyset$.
\item If $i\in|2,n-l|$ then
$\widehat{U}^{l+1}_{n,i}=\left\{\left(i-1,0,\ldots,0,
\raise1.2pt\hbox{$\stackrel{n-i+1}{1}$},0,\ldots,0 \right)\right\}\cup\overline{U}^{l+1}_{n,i}$. For $\overline{u}_n=\left(i-1,0,\ldots,0,
\raise1.2pt\hbox{$\stackrel{n-i+1}{1}$},0,\ldots,0 \right)\in \widehat{U}^{l+1}_{n,i}$ we have $B_{\overline{u}_n}=i$.
\end{enumerate}
\end{lemma}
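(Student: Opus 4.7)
My plan is to prove both parts by combining the support conditions defining $\widehat{U}^{l+1}_{n,i}$ with the two linear identities built into $U_{n,i}$, namely $\sum_{j=1}^n u_j = i$ and $\sum_{j=1}^n j u_j = n$. The starting observation is that for any $\overline{u}_n \in \widehat{U}^{l+1}_{n,i}$ the coordinates $u_j$ vanish on $|2,l| \cup |n-i+2,n|$, so the only indices where $u_j$ may be nonzero are $j=1$ and $j \in |l+1,n-i+1|$.

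For part (i), the hypothesis $i \geq n-l+1$ yields $n-i+1 \leq l$, which makes the range $|l+1,n-i+1|$ empty. Thus only $u_1$ can be nonzero, and the two identities force $u_1 = i$ and $u_1 = n$ simultaneously; since $i \leq n-1$ this is impossible, so $\widehat{U}^{l+1}_{n,i} = \emptyset$, and a fortiori $U^{l+1}_{n,i} = \emptyset$.

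For part (ii), $i \leq n-l$ gives $l+1 \leq n-i+1$, so the range is nonempty. I write $k := u_{n-i+1}$ and subtract the first identity from the second, restricted to the surviving indices, obtaining
$$\sum_{j=l+1}^{n-i+1}(j-1)u_j = n-i.$$
Separating the boundary term $j=n-i+1$ yields
$$\sum_{j=l+1}^{n-i}(j-1)u_j = (n-i)(1-k).$$
Since $n-i \geq l \geq 1$ and each coefficient $j-1 \geq l$ is positive, the left side is nonnegative, forcing $k \in \{0,1\}$. The case $k=0$ is precisely the defining condition of $\overline{U}^{l+1}_{n,i}$. The case $k=1$ reduces the identity to $\sum_{j=l+1}^{n-i}(j-1)u_j = 0$, hence $u_j = 0$ for all $j \in |l+1,n-i|$; combined with $\sum u_j = i$ this gives $u_1 = i-1$, which is exactly the distinguished tuple. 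Its multinomial coefficient is $B_{\overline{u}_n} = i!/((i-1)!\,1!) = i$, as claimed.

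The whole argument is elementary bookkeeping. The only step requiring any mild care is the subtraction of the two constraints restricted to this sparse support together with the separation of the boundary term; once the empty ranges are handled correctly (particularly the corner case $n-i=l$, where $|l+1,n-i|$ itself is empty so the reduced identity is satisfied vacuously), there is no further obstacle.
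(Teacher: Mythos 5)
Your argument is correct: the case split on $u_{n-i+1}\in\{0,1\}$, forced by subtracting the two defining identities $\sum_j u_j=i$ and $\sum_j ju_j=n$ over the allowed support, yields exactly the decomposition $\widehat{U}^{l+1}_{n,i}=\{(i-1,0,\ldots,0,\raise1.2pt\hbox{$\stackrel{n-i+1}{1}$},0,\ldots,0)\}\cup\overline{U}^{l+1}_{n,i}$ with $B_{\overline{u}_n}=i$, and the emptiness in case (i) follows from the incompatibility $u_1=i=n$. The paper states this lemma without proof as a direct consequence of the equalities describing $U_{n,i}$, and your elementary bookkeeping is precisely that intended verification.
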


As an elementary consequence of~\eqref{dn} and Lemma~\ref{l3x} we get

\begin{corollary}\label{c1}
If\/ $l\geq1$, $n\geq l+2$ and $b_j=0$ for $j\in|2,l|$, then
$$
\sum_{\overline{u}_n \in
\widehat{U}_{n,i}^{l+1}} B_{\overline{u}_n} b_1^{u_1}
\prod_{j=l+1}^{n-i+1}b_j^{u_j}=ib_1^{i-1}b_{n-i+1}+\sum_{\overline{u}_n \in\overline{U}^{l+1}_{n,i}} B_{\overline{u}_n}b_1^{u_1}\prod_{j=l+1}^{n-i}b_j^{u_j}\,\mbox{ for }\, i\in|2,n-l|,
$$
hence
$$
\begin{array}{rcl}
d_n&=&\displaystyle a_1b_n +\sum_{i=2}^{n-l} a_i \sum_{\overline{u}_n \in
\widehat{U}_{n,i}^{l+1}} B_{\overline{u}_n} b_1^{u_1}
\prod_{j=l+1}^{n-i+1}b_j^{u_j}+a_nb_1^n\\
&=&\displaystyle a_1b_n +\sum_{i=2}^{n-l} a_i \left(ib_1^{i-1}b_{n-i+1}+\sum_{\overline{u}_n\in
\overline{U}^{l+1}_{n,i}}B_{\overline{u}_n}b_1^{u_1} \prod_{j=l+1}^{n-i}b_j^{u_j} \right) +a_nb_1^n.
\end{array}
$$
\end{corollary}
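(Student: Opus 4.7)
The plan is to derive both identities directly from~\eqref{dn} combined with the two parts of Lemma~\ref{l3x}, using the vanishing hypothesis $b_j=0$ for $j\in|2,l|$ to eliminate whole families of summands in a~controlled way.

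First I would examine the inner sum $\sum_{\overline{u}_n\in U_{n,i}}B_{\overline{u}_n}b_1^{u_1}\prod_{j=2}^{n-i+1}b_j^{u_j}$ in~\eqref{dn}. Any tuple with at least one $u_j>0$ for $j\in|2,l|$ contributes zero, because its summand contains the factor $b_j^{u_j}=0$. So the summation may be restricted to tuples satisfying $u_j=0$ for $j\in|2,l|$; combined with the already-known vanishing $u_j=0$ for $j\in|n-i+2,n|$ coming from Lemma~\ref{l3}{\em (ii)}, the indexing set is exactly $\widehat{U}_{n,i}^{l+1}$. The inner product then collapses to $\prod_{j=l+1}^{n-i+1}b_j^{u_j}$.

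Next, Lemma~\ref{l3x}{\em (i)} gives $\widehat{U}_{n,i}^{l+1}=\emptyset$ whenever $i\in|n-l+1,n-1|$, so those terms vanish and the outer summation range $|2,n-1|$ shrinks to $|2,n-l|$. For each $i\in|2,n-l|$, Lemma~\ref{l3x}{\em (ii)} decomposes $\widehat{U}_{n,i}^{l+1}$ disjointly into the singleton whose unique element has $u_1=i-1$, $u_{n-i+1}=1$, and all other entries zero, together with $\overline{U}_{n,i}^{l+1}$; by the same part of the lemma, the singleton's multinomial coefficient equals~$i$. Its contribution to the inner sum is therefore exactly $ib_1^{i-1}b_{n-i+1}$, while for tuples in $\overline{U}_{n,i}^{l+1}$ one has $u_j=0$ for $j\in|n-i+1,n|$, so the product shrinks further to $\prod_{j=l+1}^{n-i}b_j^{u_j}$. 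This proves the first displayed identity; substituting it into~\eqref{dn} with the reduced outer range yields the second.

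There is no real obstacle: the argument is combinatorial bookkeeping over $U_{n,i}$, $\widehat{U}_{n,i}^{l+1}$ and $\overline{U}_{n,i}^{l+1}$ as prescribed by the two lemmas. The only point requiring care is to keep the product ranges and the vanishing index sets consistent with the definitions, in particular when reducing $\prod_{j=2}^{n-i+1}$ first to $\prod_{j=l+1}^{n-i+1}$ and then (after extracting the distinguished tuple) to $\prod_{j=l+1}^{n-i}$.
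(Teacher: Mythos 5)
Your argument is correct and is exactly the derivation the paper intends: the corollary is stated there as an immediate consequence of~\eqref{dn} and Lemma~\ref{l3x}, and your write-up simply carries out that bookkeeping (restricting $U_{n,i}$ to $\widehat{U}^{l+1}_{n,i}$ via the vanishing of $b_2,\ldots,b_l$, discarding $i\in|n-l+1,n-1|$ by Lemma~\ref{l3x}\,(i), and splitting off the distinguished tuple with coefficient $i$ by Lemma~\ref{l3x}\,(ii)). Nothing further is needed.
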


Fix a positive integer $l\geq2$ and put
\begin{equation}\label{xxy}
\begin{array}{ll}
\widetilde{U}_{n,l+1}^{l+1}= \overline{U}^{l+1}_{n,l+1}\;\mbox{ and }\; \widecheck{U}^{l+1}_{n,l+1}=U^{l+1}_{n,l+1}\cap\overline{U}^{l+1}_{n,l+1} & \mbox{ for }\, n\geq2l+2,\\
\widetilde{U}_{n,2l+1}^{l+1}=\widehat{U}_{n,2l+1}^{l+1} & \mbox{ for }\, n\geq 3l+1.
\end{array}
\end{equation}
We prove

\begin{lemma}\label{l5}
If\/ $l\geq2$, $n\in|l,\infty|\setminus\mathbb{N}_l$ and $i\in\{l+1,2l+1\}$ then $\widetilde{U}_{n,i}^{l+1}=\emptyset$.
\end{lemma}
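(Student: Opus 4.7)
The plan is to rule out the existence of any $\overline{u}_n=(u_1,\ldots,u_n)\in\widetilde{U}^{l+1}_{n,i}$ by extracting a divisibility condition modulo $l$ from the two defining equations $\sum_{j=1}^n u_j=i$ and $\sum_{j=1}^n j u_j=n$ of $U_{n,i}$.

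First, I would unwind the support restrictions packed into the definition of $\widetilde{U}^{l+1}_{n,i}$ in both cases $i=l+1$ and $i=2l+1$. The nested conditions coming from $\widehat{U}^{l+1}_{n,i}$, $\overline{U}^{l+1}_{n,i}$ and $U^{l+1}_{n,i}$ together force $u_j=0$ outside $\{1\}\cup\mathbb{N}_l$: they kill the block $|2,l|$, the tail $|n-i+2,n|$ (resp.\ $|n-i+1,n|$), and any interior index $j\in|l+2,n-i+1|\setminus\mathbb{N}_l$. Hence the only positions at which $u_j$ may be nonzero are $j=1$ and indices $j\equiv 1\pmod l$.

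Next, I would subtract the cardinality constraint $\sum u_j=i$ from the weighted-sum constraint $\sum j u_j=n$ to obtain
$$
\sum_{j=1}^n (j-1)\,u_j \;=\; n-i.
$$
For every $j$ with $u_j\neq 0$ one has $l\mid(j-1)$: either $j=1$, in which case $j-1=0$, or $j\in\mathbb{N}_l$, in which case $l\mid(j-1)$ by the very definition of $\mathbb{N}_l$. Consequently the left-hand side is divisible by $l$, so $l\mid(n-i)$.

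Finally, since $i\in\{l+1,2l+1\}$ one has $i-1\in\{l,2l\}$, hence $l\mid(i-1)$ in both cases. Combined with $l\mid(n-i)$ this yields $l\mid(n-1)$, i.e.\ $n\in\mathbb{N}_l$, contradicting the hypothesis $n\in|l,\infty|\setminus\mathbb{N}_l$. The only obstacle I anticipate is the careful bookkeeping of the support conditions inherited from the four nested sets $\widehat{U}^{l+1}$, $\overline{U}^{l+1}$, $U^{l+1}$ and $\widetilde{U}^{l+1}$; once the support is seen to live in $\{1\}\cup\mathbb{N}_l$, the divisibility argument collapses into a single line.
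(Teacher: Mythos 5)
Your argument is correct and is essentially the paper's own proof: both read off from the defining support conditions that $u_j$ can be nonzero only for $j=1$ or $j\equiv1\pmod l$, and then use $n-i=\sum_{j=2}^n(j-1)u_j$ to conclude $n\equiv i\equiv1\pmod l$, i.e. $n\in\mathbb{N}_l$, contradicting the hypothesis. The only cosmetic difference is that the paper writes $n=i+\sum_{j\in|2,n-i+1|\cap\mathbb{N}_l}(j-1)u_j\in\mathbb{N}_l$ directly, whereas you spell out the divisibility chain $l\mid(n-i)$ and $l\mid(i-1)$, hence $l\mid(n-1)$.
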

\begin{proof}
Fix $l\geq2$, $n\in|l+2,\infty|\setminus{\mathbb{N}}_l$ and $i\in\{l+1,2l+1\}$. If $\overline{u}_n\in \widetilde{U}_{n,i}^{l+1}$ then for each
$j\in(|2,n-i+1|\setminus\mathbb{N}_l)\cup|n-i+2,n|$ we have $u_j=0$. Hence
$$
n-i=\sum_{j=2}^n(j-1)u_j=\sum_{j\in|2,n-i+1|\cap\mathbb{N}_l}(j-1)u_j,
$$
which implies
$$
n=i+\sum_{j\in|2,n-i+1|\cap\mathbb{N}_l} (j-1)u_j\in{\mathbb{N}}_l.
$$
This contradiction finishes the proof.
\end{proof}

We summarize now properties of substitution in the following lemma.

\begin{corollary}\label{c3}
Fix positive integers $l\geq1$ and $n\geq l+2$. If $b_1=1$ and $b_j=0$ for $j\in|2,n-l|\setminus\mathbb{N}_l$, then
$$
d_n=a_1b_n +\sum_{i=2}^{n-l} a_i \sum_{\overline{u}_n \in
U_{n,i}^{l+1}} B_{\overline{u}_n} b_1^{u_1}
\prod_{j\in|2,n-i+1|\cap\mathbb{N}_l}b_j^{u_j}+a_nb_1^n.
$$
Moreover,
\begin{enumerate}[(i)]
\item for $n\notin\mathbb{N}_l$ and $i\in\{l,2l-1|$ we have
$$
\sum_{\overline{u}_n \in
\widetilde{U}_{n,i}^{l+1}} B_{\overline{u}_n} b_1^{u_1}
\prod_{j\in|l,n-i+1|\cap\mathbb{N}_l}b_j^{u_j}=0,
$$
\item for $n=rl+1\in\mathbb{N}_l$ and
$i=pl+1\in|l+1,n-1|\cap\mathbb{N}_l$ we have
$$
\sum_{\overline{u}_n\in\widetilde{U}^{l+1}_{n,i}}B_{\overline{u}_n} b_1^{u_1}
\prod_{j=l+1}^{n-i+1}b_j^{u_j}=
\sum_{\overline{u}_{rl+1} \in\widecheck{U}_{rl+1,pl+1}^{l+1}}
B_{\overline{u}_{rl+1}} b_1^{u_1}
\prod_{j=1}^{r-p}b_{jl+1}^{u_{jl+1}}.
$$
\end{enumerate}
\end{corollary}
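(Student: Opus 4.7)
My plan is to derive the identity directly from Corollary~\ref{c1} by restricting the inner summation domain $\widehat{U}^{l+1}_{n,i}$ to $U^{l+1}_{n,i}$ via the vanishing hypothesis on $(b_j)$. Both auxiliary statements are short consequences: part~(i) follows from Lemma~\ref{l5}, and part~(ii) from an explicit reindexing of the surviving tuples.

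I would begin with the formula provided by Corollary~\ref{c1},
$$
d_n = a_1b_n + \sum_{i=2}^{n-l} a_i \sum_{\overline{u}_n\in\widehat{U}^{l+1}_{n,i}} B_{\overline{u}_n}\,b_1^{u_1}\prod_{j=l+1}^{n-i+1}b_j^{u_j} + a_n b_1^n.
$$
For a tuple $\overline{u}_n\in\widehat{U}^{l+1}_{n,i}\setminus U^{l+1}_{n,i}$ there is an index $j_0\in|l+2,n-i+1|\setminus\mathbb{N}_l$ with $u_{j_0}\ge1$; the hypothesis $b_j=0$ for $j\in|2,n-l|\setminus\mathbb{N}_l$ then forces the corresponding product to vanish, so the inner sum reduces to a sum over $U^{l+1}_{n,i}$. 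On this restricted domain $u_j=0$ whenever $j\in|l+2,n-i+1|\setminus\mathbb{N}_l$, and since $l+1\in\mathbb{N}_l$, the product $\prod_{j=l+1}^{n-i+1}b_j^{u_j}$ coincides with $\prod_{j\in|2,n-i+1|\cap\mathbb{N}_l}b_j^{u_j}$. Setting $b_1=1$ absorbs the factor $b_1^{u_1}$ trivially and yields the principal identity.

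Part~(i) is then immediate: by Lemma~\ref{l5}, $\widetilde{U}^{l+1}_{n,i}=\emptyset$ under the stated hypotheses, so the displayed sum is empty. For part~(ii), with $n=rl+1$ and $i=pl+1$, I would exploit the equality $n-i=(r-p)l$ to show that any $\overline{u}_n\in\widetilde{U}^{l+1}_{n,i}$ has support contained in $\{1\}\cup\{tl+1:1\le t\le r-p\}$. Setting $v_t:=u_{tl+1}$ and $v_0:=u_1$, the constraints $\sum u_j=i$ and $\sum ju_j=n$ translate into the defining relations of $\widecheck{U}^{l+1}_{rl+1,pl+1}$ expressed in the $v$-variables, and since only the nonzero coordinates enter $B_{\overline{u}_n}$, the multinomial weights transport unchanged, so the two sums agree term by term.

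The main obstacle I foresee is the combinatorial bookkeeping in part~(ii): one must verify that the map $u\mapsto v$ is a genuine bijection preserving both the arithmetic conditions and the multinomial coefficient, and that under the hypothesis the tuples in $\widetilde{U}^{l+1}_{rl+1,pl+1}$ which contribute nontrivially to the sum are precisely those of $\widecheck{U}^{l+1}_{rl+1,pl+1}$. Once the four layered definitions $U_{n,i}$, $\widehat{U}$, $\overline{U}$ and $U^{l+1}$ have been untangled, each identity falls out as a formal consequence of Corollary~\ref{c1} together with Lemma~\ref{l5}.
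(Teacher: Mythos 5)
Your route --- specialize Corollary~\ref{c1}, use the vanishing of the $b_j$'s to shrink $\widehat{U}^{l+1}_{n,i}$ to $U^{l+1}_{n,i}$, quote Lemma~\ref{l5} for (i), and reindex along $\mathbb{N}_l$ for (ii) --- is exactly the derivation the paper intends (it states the corollary without proof, as a summary of Corollary~\ref{c1}, Lemma~\ref{l5} and the definitions of the $U$-sets). Parts (i) and (ii) are fine, provided in (ii) you argue as your closing paragraph indicates: the hypothesis kills every tuple of $\widetilde{U}^{l+1}_{n,i}$ having a nonzero entry at a position outside $\mathbb{N}_l$ (all such positions are $\leq n-l$ here because $i\geq l+1$), so the sum collapses onto the subset $\widecheck{U}^{l+1}_{rl+1,pl+1}$ and the relabelling $v_t=u_{tl+1}$ is a mere change of notation; it is not true, as your middle paragraph briefly suggests, that \emph{every} element of $\widetilde{U}^{l+1}_{n,i}$ is supported on $\{1\}\cup\mathbb{N}_l$.

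The main identity, however, contains a genuine gap. Discarding a tuple $\overline{u}_n\in\widehat{U}^{l+1}_{n,i}\setminus U^{l+1}_{n,i}$ needs $b_{j_0}=0$ at a witness $j_0\in|l+2,n-i+1|\setminus\mathbb{N}_l$, but the hypothesis only controls $j_0\leq n-l$; for $i\in|2,l|$ (nonempty as soon as $l\geq2$) one has $n-i+1>n-l$, and by Lemma~\ref{l3x}\,(ii) the tuple $\left(i-1,0,\ldots,0,\raise1.2pt\hbox{$\stackrel{n-i+1}{1}$},0,\ldots,0\right)$ lies in $\widehat{U}^{l+1}_{n,i}\setminus U^{l+1}_{n,i}$ with its second nonzero entry exactly at such an uncontrolled position (for $n<2l$ even the appeal to Corollary~\ref{c1} is unjustified, since $b_j=0$ for all $j\in|2,l|$ is then not granted). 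This cannot be patched from the stated hypothesis: take $l=3$, $n=9$, so $b_2=b_3=b_5=b_6=0$ while $b_8$ is free; then $d_9$ contains the monomial $2a_2b_1b_8$ (from $u_1=u_8=1$), whereas $U^{4}_{9,2}=\emptyset$, so the displayed formula fails whenever $a_2b_8\neq0$. The identity is correct precisely in the regime in which the paper later applies it, namely when $a_i=0$ for $i\in|2,l|$ (e.g.\ $a_i$ the coefficients of $\widetilde{H}$, so that only $i\in\{l+1,2l+1\}$ occur and every relevant index satisfies $n-i+1\leq n-l$), or under the strengthened hypothesis $b_j=0$ for $j\in|2,n-1|\setminus\mathbb{N}_l$; your proof should add one of these restrictions and note the discrepancy with the statement as printed, since as written the step ``the hypothesis forces the corresponding product to vanish'' is false for $i\leq l$.
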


\section{The third Aczel-Jabotynski formal differential equation}\label{AJeqG}

Here and subsequently, if it will not be stated otherwise, $s$ is either a positive integer or $s=\infty$. We prove some properties of the solutions of the third Acz{\'e}l-Jabotinsky differential  equation
$$
(H\circ\Phi)(x)=\frac{\dd\Phi}{\dd x}\cdot H(x)\leqno{({\mathrm{AJ}}_s(H))}
$$
considered here for both, formal power series and $s$-truncated formal power series. In that last case all the equalities are then assumed as to hold $\mbox{mod}\, x^{s+1}$.

\begin{fact}[cf.~\cite{Reich1}]\label{f1}
For a fixed  nonzero $H(x)\in\ps{\boldsymbol{k}}{x}$ the set ${\mathcal{S}}({\mathrm{AJ}}_s(H))$ of all solutions $\Phi$ of~$({\mathrm{AJ}}_s(H))$ is a~group with respect to the substitution.
\end{fact}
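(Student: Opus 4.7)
The plan is to verify the three group axioms: the identity $\id=x$ solves $({\mathrm{AJ}}_s(H))$, the set ${\mathcal{S}}({\mathrm{AJ}}_s(H))$ is closed under $\circ$, and it is closed under the substitutional inverse. I would use throughout the formal chain rule $(f\circ g)'=(f'\circ g)\cdot g'$ and distributivity $(f\cdot g)\circ h=(f\circ h)\cdot(g\circ h)$, both of which are valid in $\ps{\boldsymbol{k}}{x}$ and descend to $\ps{\boldsymbol{k}}{x}_s$ by truncation. The identity axiom is immediate: $H\circ\id=H=1\cdot H=\id'\cdot H$. Before the two closure arguments I would also record that any nonzero $\Phi\in{\mathcal{S}}({\mathrm{AJ}}_s(H))$ automatically lies in $\Gamma^s$: comparing orders in $({\mathrm{AJ}}_s(H))$ with $\ord H=l+1$ forces $\ord\Phi=1$, and then comparing coefficients of $x^{l+1}$ forces $c_1^l=1$. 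Thus $\Phi$ is substitutionally invertible, which legitimises what follows.

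For closure under $\circ$, I would take $\Phi_1,\Phi_2\in{\mathcal{S}}({\mathrm{AJ}}_s(H))$ and compute
$$
\begin{aligned}
H\circ(\Phi_1\circ\Phi_2)&=(H\circ\Phi_1)\circ\Phi_2=(\Phi_1'\cdot H)\circ\Phi_2\\
&=(\Phi_1'\circ\Phi_2)\cdot(H\circ\Phi_2)=(\Phi_1'\circ\Phi_2)\cdot\Phi_2'\cdot H=(\Phi_1\circ\Phi_2)'\cdot H,
\end{aligned}
$$
where the first step is associativity of $\circ$, the second and fourth are $({\mathrm{AJ}}_s(H))$ applied to $\Phi_1$ and $\Phi_2$, the third is distributivity of $\circ$ over $\cdot$, and the last is the chain rule. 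Hence $\Phi_1\circ\Phi_2\in{\mathcal{S}}({\mathrm{AJ}}_s(H))$.

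For inverses, let $\Phi\in{\mathcal{S}}({\mathrm{AJ}}_s(H))$ with $\Phi(x)=c_1x+\ldots$ and set $\Psi=\Phi^{\circ-1}\in\Gamma^s$. Substituting $\Psi$ into $({\mathrm{AJ}}_s(H))$ for $\Phi$ gives
$$
H=(H\circ\Phi)\circ\Psi=(\Phi'\cdot H)\circ\Psi=(\Phi'\circ\Psi)\cdot(H\circ\Psi).
$$
Differentiating $\Phi\circ\Psi=\id$ yields $(\Phi'\circ\Psi)\cdot\Psi'=1$, and since the constant term of $\Psi'$ equals $c_1^{-1}\neq 0$ the series $\Psi'$ is a unit in $\ps{\boldsymbol{k}}{x}_s$; hence $\Phi'\circ\Psi=1/\Psi'$, and the displayed identity gives $H\circ\Psi=\Psi'\cdot H$, i.e.\ $\Psi\in{\mathcal{S}}({\mathrm{AJ}}_s(H))$. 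The argument is essentially formal; the only points that require attention are that solutions automatically have order $1$ (so that $\Phi^{\circ-1}$ exists) and that $\Psi'$ is a unit in $\ps{\boldsymbol{k}}{x}_s$ (so that one may divide by it), and both follow from the order/coefficient analysis already recorded. In the truncated case every equality is read $\bmod x^{s+1}$, but as $\circ$ and $\cdot$ in $\ps{\boldsymbol{k}}{x}_s$ are by definition the truncations of the corresponding operations in $\ps{\boldsymbol{k}}{x}$, the identities above transport verbatim and no genuine obstacle arises.
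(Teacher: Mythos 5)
Your identity, composition and inversion computations are correct and are essentially the paper's own proof: closure under $\circ$ comes from applying $({\mathrm{AJ}}_s(H))$ first to $\Phi_1$ and then to $\Phi_2$ together with the chain rule, and the inverse is obtained by substituting $\Phi^{\circ-1}$ into the equation and using $(\Phi'\circ\Phi^{\circ-1})\cdot(\Phi^{\circ-1})'=1$. You are in fact more explicit than the paper, which takes the existence of $\Phi^{\circ-1}$ for granted and compresses the final division by $(\Phi^{\circ-1})'$ into the remark that the displayed identity ``is $({\mathrm{AJ}}_s(H))$ for $\Phi^{-1}$''; your observation that $(\Phi^{\circ-1})'$ is a unit because its constant term is $c_1^{-1}\neq0$ is exactly the missing justification.

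The genuine problem is the auxiliary claim you make load-bearing, namely that every nonzero $\Phi\in{\mathcal{S}}({\mathrm{AJ}}_s(H))$ automatically lies in $\Gamma^s$. For $s=\infty$ the conclusion is true, but your justification is incomplete when $l=0$: there both sides of the equation have the same order $m=\ord\Phi$, and it is the comparison of leading coefficients, $c_m=mc_m$, not of orders, that excludes $m\geq2$. For finite $s$ the claim is false: take $l\geq1$, say $H(x)=x^{l+1}$, and $\Phi(x)=x^s$. Then $\ord\left(\frac{\dd\Phi}{\dd x}\cdot H\right)=s+l\geq s+1$ and $\ord(H\circ\Phi)=s(l+1)\geq s+1$, so both sides vanish $\bmod\ x^{s+1}$ and $\Phi$ is a nonzero, non-invertible solution of the truncated equation; more generally any $\Phi$ with $2\leq\ord\Phi$ and $\ord\Phi\geq s-l+1$ works -- this is the same truncation phenomenon that later produces the free coefficients $c_{s-l+1},\ldots,c_s$ in Lemma~\ref{lx1}. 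Consequently, read literally, ``the set of all solutions'' in $\ps{\boldsymbol{k}}{x}_s$ is not a group, and no order analysis can repair that. The statement has to be read, as the paper implicitly does (cf.\ Section~\ref{sequal}, where $\Phi(x)=\sum_{i\geq1}c_ix^i$ and the root $c_1=0$ of $c_1^{l+1}=c_1$ is discarded), as concerning solutions in $\Gamma^s$, i.e.\ of order $1$. With that convention your invertibility preamble becomes unnecessary, and the remainder of your argument goes through verbatim.
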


\begin{proof}
Indeed, fix $\Phi_1,\Phi_2\in{\mathcal{S}}({\mathrm{AJ}}_s(H))$. Then
$$
\begin{array}{lll}
\lefteqn{
H\left((\Phi_1\circ \Phi_2)(x)\right)%}\null\\[1ex]
%& & \null\displaystyle
=H\left(\Phi_1(\Phi_2(x))\right)=\left.\frac{\dd\Phi_1}{\dd
x}\right|_{y=\Phi_2(x)}\cdot H(\Phi_2(x))}\null\\[1ex]
&\;\;\;\;\;\;\;\;\;\;\;\;\;\;\;\;\;\;\;\; &
\null\displaystyle =\left.\frac{\dd\Phi_1}{\dd
x}\right|_{y=\Phi_2(x)}\cdot\frac{\dd\Phi_2}{\dd x}\cdot
H(x)=\frac{\dd(\Phi_1\circ\Phi_2)}{\dd x}\cdot H(x).
\end{array}
$$
Hence $\Phi_1\circ\Phi_2\in{\mathcal{S}}({\mathrm{AJ}}_s(H))$. Moreover, if $\Phi\in{\mathcal{S}}({\mathrm{AJ}}_s(H))$ then
$$
H(x)=H((\Phi\circ\Phi^{-1})(x))=H(\Phi(\Phi^{-1}(x)))=\left.\frac{\dd\Phi}{\dd
x}\right|_{y=\Phi^{-1}(x)}\cdot H(\Phi^{-1}(x)),
$$
which is~$({\mathrm{AJ}}_s(H))$ for $\Phi^{-1}$.
\end{proof}

We prove now a connection bewteen ${\mathcal{S}}(\mathrm{AJ}_{\infty}(H_1))$ and ${\mathcal{S}}(\mathrm{AJ}_s(H_2))$ for finite~$s$.

\begin{lemma}\label{lx2}
If $s$ is finite then $\pi^{\infty}_s\left({\mathcal{S}}(\mathrm{AJ}_{\infty}(H))\right)\subset {\mathcal{S}}(\mathrm{AJ}_s(\pi^{\infty}_s(H)))$.
\end{lemma}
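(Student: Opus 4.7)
The plan is to apply the projection $\pi^{\infty}_s$ to both sides of $(\mathrm{AJ}_{\infty}(H))$ and show that, in the truncated ring~$\ps{k}{x}_s$, the resulting equation is precisely $(\mathrm{AJ}_s(\pi^{\infty}_s(H)))$ for $\pi^{\infty}_s(\Phi)$. Since $\pi^{\infty}_s$ is a ring epimorphism, it already commutes with addition and with multiplication, so only the composition $H\circ\Phi$ and the derivative $\dd\Phi/\dd x$ require individual attention.

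For the composition, the substitution in $\ps{k}{x}_s$ is by construction the $s$-truncation of the usual substitution, and inspection of~\eqref{7} shows that the coefficient $d_n$ of $x^n$ in $f\circ g$ for $n\leq s$ depends only on the coefficients $a_i$ and $b_j$ with $i,j\leq n$. Consequently
\[
\pi^{\infty}_s(H\circ\Phi)=\pi^{\infty}_s(H)\circ\pi^{\infty}_s(\Phi)
\]
in~$\ps{k}{x}_s$, where the substitution on the right is understood in the truncated ring.

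The main obstacle is the derivative, since in general $\pi^{\infty}_s(\dd\Phi/\dd x)$ and $\dd\pi^{\infty}_s(\Phi)/\dd x$ differ by the term $(s+1)c_{s+1}x^s$, where $c_{s+1}$ is a coefficient of $\Phi$ that is erased by $\pi^{\infty}_s$. The key observation removing this discrepancy is that $H$ has order at least~$1$ by the standing assumption on~$H$, so the extra summand, once multiplied by~$H(x)$, has order at least $s+1$ and therefore vanishes modulo $x^{s+1}$. Hence
\[
\pi^{\infty}_s\!\left(\frac{\dd\Phi}{\dd x}\cdot H\right)=\frac{\dd\pi^{\infty}_s(\Phi)}{\dd x}\cdot\pi^{\infty}_s(H)
\]
in~$\ps{k}{x}_s$.

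Combining this identity with the composition identity above and applying $\pi^{\infty}_s$ to both sides of $(\mathrm{AJ}_{\infty}(H))$ yields exactly $(\mathrm{AJ}_s(\pi^{\infty}_s(H)))$ for $\pi^{\infty}_s(\Phi)$, which is the desired inclusion.
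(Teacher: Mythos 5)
Your proof is correct and follows essentially the same route as the paper: apply $\pi^{\infty}_s$ to both sides, use that truncation commutes with substitution and multiplication, and observe that $\pi^{\infty}_s(\dd\Phi/\dd x)$ and $\dd(\pi^{\infty}_s\Phi)/\dd x$ agree after multiplication by $H$ modulo $x^{s+1}$. In fact you make explicit the detail the paper only states as an observation, namely that the discrepancy $(s+1)c_{s+1}x^s$ is annihilated because $\ord H\geq1$.
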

\begin{proof}
If $\Phi\in{\mathcal{S}}(\mathrm{AJ}_{\infty}(H))$ then for finite $s$ we have
$$
\pi^{\infty}_s\left((H\circ \Phi)(x)\right)=\pi^{\infty}_s\left(\frac{\dd \Phi}{\dd x}\cdot H(x)\right),
$$
hence
$$
\left((\pi^{\infty}_sH)\circ (\pi^{\infty}_s\Phi)\right)(x)=\pi^{\infty}_s\left(\frac{\dd \Phi}{\dd x}\right)\cdot (\pi^{\infty}_sH)(x)\mod x^{s+1}.
$$
Finally, we have to observe that
$$
\pi^{\infty}_s\left(\frac{\dd \Phi}{\dd x}\right)\cdot (\pi^{\infty}_sH)(x)=
\frac{\dd (\pi^{\infty}_s\Phi)}{\dd x}\cdot (\pi^{\infty}_sH)(x)
\mod x^{s+1},
$$
which means that $\pi^{\infty}_s\Phi\in{\mathcal{S}}(\mathrm{AJ}_s(\pi^{\infty}_sH))$.
\end{proof}

In some cases it is convenient to consider another, equivalent form of the Acz{\'e}l-Jabotinsky equation~$({\mathrm{AJ}}_s(H))$. This is expressed in the following

\begin{fact}[{cf.~\cite[Lemmas~1 and~4]{Reich1}}]\label{f2}
Let $T\in\Gamma_1^s$ be fixed. Then $\Phi\in{\mathcal{S}}({\mathrm{AJ}}_s(H))$ if and only if
$\widetilde{\Phi}=(T^{-1}\circ\Phi\circ
T)\in{\mathcal{S}}({\mathrm{AJ}}_s(\widetilde{H}))$ with $\widetilde{H}(x)=\left(\frac{\dd T}{\dd x}\right)^{-1}(H\circ
T)(x)$.
\end{fact}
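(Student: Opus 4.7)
The plan is a direct verification via the chain rule. Set $\widetilde{\Phi}=T^{-1}\circ\Phi\circ T$ and compute both sides of $(\mathrm{AJ}_s(\widetilde{H}))$ in terms of $\Phi$, $T$ and $H$, then show the resulting identity is equivalent to $(\mathrm{AJ}_s(H))$ evaluated at $T(x)$.

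First I would record the derivative identity $(T^{-1})'(y)=1/T'(T^{-1}(y))$, obtained by differentiating $T\circ T^{-1}=\id$; note $T\in\Gamma_1^s$ ensures $T'(0)=1$, so $\frac{\dd T}{\dd x}$ is a unit in $\ps{k}{x}_s$ and all the inversions make sense both in $\ps{k}{x}$ and modulo $x^{s+1}$. Applying the chain rule to $\widetilde{\Phi}=T^{-1}\circ\Phi\circ T$ and substituting this identity at the point $\Phi(T(x))$ (whose $T^{-1}$-image is $\widetilde{\Phi}(x)$) yields
\[
\frac{\dd \widetilde{\Phi}}{\dd x}(x)=\frac{\Phi'(T(x))\cdot T'(x)}{T'(\widetilde{\Phi}(x))}.
\]

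Next I would compute the right-hand side of $(\mathrm{AJ}_s(\widetilde{H}))$ using $\widetilde{H}(x)=H(T(x))/T'(x)$:
\[
\frac{\dd \widetilde{\Phi}}{\dd x}(x)\cdot \widetilde{H}(x)
=\frac{\Phi'(T(x))\cdot T'(x)}{T'(\widetilde{\Phi}(x))}\cdot\frac{H(T(x))}{T'(x)}
=\frac{\Phi'(T(x))\cdot H(T(x))}{T'(\widetilde{\Phi}(x))}.
\]
For the left-hand side, direct substitution gives
\[
(\widetilde{H}\circ \widetilde{\Phi})(x)=\frac{H(T(\widetilde{\Phi}(x)))}{T'(\widetilde{\Phi}(x))}=\frac{H(\Phi(T(x)))}{T'(\widetilde{\Phi}(x))},
\]
where we used $T\circ \widetilde{\Phi}=\Phi\circ T$. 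Hence $(\mathrm{AJ}_s(\widetilde{H}))$ holds for $\widetilde{\Phi}$ if and only if
\[
H(\Phi(T(x)))=\Phi'(T(x))\cdot H(T(x)),
\]
which is $(\mathrm{AJ}_s(H))$ evaluated at $T(x)$. Since $T\in\Gamma_1^s\subset\Gamma^s$ is invertible under substitution, this equality is equivalent to $(\mathrm{AJ}_s(H))$ itself, establishing both implications simultaneously.

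The same identities work modulo $x^{s+1}$ because substitution, differentiation, and multiplication by units all respect the ideal $\G{m}^{s+1}$ as described in Section~\ref{ringfps}. The only step that requires real care is bookkeeping: tracking at which point each derivative is evaluated when applying the chain rule in formal power series, and verifying that $T^{-1}(\Phi(T(x)))=\widetilde{\Phi}(x)$ is used precisely where $T'$ is evaluated, so that the factor $T'(\widetilde{\Phi}(x))$ matches on both sides and cancels cleanly. Once that matching is set up correctly, the proof is a routine symmetric manipulation.
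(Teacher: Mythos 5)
Your proposal is correct and follows essentially the same route as the paper: a chain-rule computation on the conjugation relation $T\circ\widetilde{\Phi}=\Phi\circ T$ combined with composing $({\mathrm{AJ}}_s(H))$ with $T$, using that $\frac{\dd T}{\dd x}$ and $T'(\widetilde{\Phi}(x))$ are units and that substitution by the invertible $T$ is injective. The only cosmetic differences are that you phrase it as a single chain of equivalences (the paper proves one direction and invokes the symmetric map for the converse) and you route the derivative of $\widetilde{\Phi}$ through $(T^{-1})'$ rather than differentiating $T\circ\widetilde{\Phi}=\Phi\circ T$ directly.
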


\begin{proof}
Let $\Phi$ be a solution of~$({\mathrm{AJ}}_s(H))$ and let
$T\circ\widetilde{\Phi}=\Phi\circ T$. Then
\begin{equation}\label{4}
\left.\frac{\dd T}{\dd u}\right|_{u=\widetilde{\Phi}(x)}
\cdot\frac{\dd\widetilde{\Phi}}{\dd x}=\left.\frac{\dd\Phi}{\dd v}\right|_{v=T(x)}\cdot\frac{\dd T}{\dd x}.
\end{equation}
Moreover, from~$({\mathrm{AJ}}_s(H))$ we have
\begin{equation}\label{5}
(H\circ\Phi\circ T)(x)=\left.\frac{\dd\Phi}{\dd
v}\right|_{v=T(x)}\cdot(H\circ T)(x).
\end{equation}
Consequently, using $\Phi\circ T=T\circ\widetilde{\Phi}$,
from~\eqref{4} and~\eqref{5} we get
$$
\left(\left.\frac{\dd T}{\dd
u}\right|_{u=\widetilde{\Phi}(x)}\right)^{-1}\cdot(H\circ
T\circ\widetilde{\Phi})(x)=\frac{\dd\widetilde{\Phi}}{\dd x}\cdot\left(\frac{\dd T}{\dd x}\right)^{-1}(H\circ T)(x),
$$
which with $\widetilde{H}(x):=\left(\frac{\dd T}{\dd
x}\right)^{-1}(H\circ T)(x)$ gives
\begin{equation}\label{6}
(\widetilde{H}\circ\widetilde{\Phi})(x)=\frac{\dd\widetilde{\Phi}}{\dd x}\cdot\widetilde{H}(x).
\end{equation}
Clearly, the converse mapping $\Phi\mapsto
T\circ\widetilde{\Phi}\circ T^{-1}$ transforms~\eqref{6}
to~$({\mathrm{AJ}}_s(H))$.
\end{proof}

\section{The system of equalities}
\label{sequal}

Calculating both sides of~$({\mathrm{AJ}}_s(H))$ (in the case of finite $s$ all the equalities should be understand as to hold $\mod x^{s+1}$),
we obtain (with $h_{l+1}=1$)
$$
\begin{array}{lll}
\displaystyle\frac{\dd\Phi}{\dd x}H(x)& = & \displaystyle
\left(\sum_{n=1}^snc_nx^{n-1}\right)
\left(x^{l+1}+\sum_{n=l+2}^sh_nx^n\right)\\[3ex]
& = & \displaystyle\sum_{n=l+1}^s
\left(\sum_{i=l+1}^nh_i(n+1-i)c_{n-i+1}\right)x^n,
\end{array}
$$
and, on account of \eqref{7} with $a_j=0$ for $j\in|1,l|$ we get
$$
\begin{array}{lll}
\displaystyle
(H\circ \Phi)(x))& = &\displaystyle \sum_{n=l+1}^sh_n\left(\sum_{i=1}^sc_ix^i \right)^n \\[3ex]
& = & \displaystyle \sum_{n=l+1}^s\left(\sum_{i=l+1}^nh_i\sum_{\overline{u}_n\in
U_{n,i}}B_{\overline{u}_n}\prod_{j=1}^{n-i+1}c_j^{u_j}\right)x^n
\end{array}
$$
with $h_l=1$. Then~$({\mathrm{AJ}}_s(H))$ is equivalent to
\begin{equation}\label{9}
\sum_{i=l+1}^nh_i\sum_{\overline{u}_n\in
U_{n,i}}B_{\overline{u}_n}\prod_{j=1}^{n-i+1}c_j^{u_j}=
\sum_{i=l+1}^nh_i(n-i+1)c_{n-i+1}\,\mbox{ for every }\,n\in|l+1,s|
\end{equation}
with $h_{l+1}=1$. In particular, for $n=l+1$ we obtain $c_1^{l+1}=c_1$. Hence, $c_1\in\boldsymbol{k}^\star$ can be arbitrarily fixed for $l=0$ and $c_1\in\boldsymbol{E}_l$ for each positive integer $l$. Therefore  determining solutions of~$({\mathrm{AJ}}_s(H))$ for both, finite and
infinite~$s$, we consider two distinct cases:
\begin{itemize}
\item[(C1)] $l=0$ i.e. $H(x)=x+\sum_{n=2}^sh_nx^n$,
\item[(C2)] $l\geq1$, i.e. $H(x)=x^{l+1}+\sum_{n=l+2}^s
h_nx^n$.
\end{itemize}
In the first case, from ~\eqref{9} on account of~\eqref{8} and~\eqref{dn} we get
$$%\begin{equation}\label{10}
\begin{array}{lll}
%\lefteqn{c_1=c_1 \qquad (\mbox{for }\,n=1)}\null\\
\lefteqn{c_2+h_2c_1^2=2c_2+h_2c_1 \qquad\qquad\qquad\qquad\quad\, (\mbox{for }\,n=2 \mbox{ provided }\,s\geq2),}\null\\[1ex]
\lefteqn{c_3+2h_2c_1c_2+h_3c_1^3=3c_3+2h_2c_2+h_3c_1 \quad (\mbox{for }\,n=3 \mbox{ provided }\,s\geq3),}\null\\
\lefteqn{c_n+\sum_{i=2}^{n-1}h_i\sum_{\overline{u}_n\in
U_{n,i}}B_{\overline{u}_n}\prod_{j=1}^{n-i+1}c_j^{u_j}+h_nc_1^n}\null\\
& \qquad\qquad\qquad\qquad & \null\displaystyle =nc_n+\sum_{i=2}^nh_i(n-i+1)c_{n-i+1}
\quad \mbox{ for  }\,n\in|4,s|.
\end{array}
$$%\end{equation}
This system can easily be reduced to
\begin{equation}\label{11}
\begin{array}{lll}
\lefteqn{c_2=h_2(c_1^2-c_1) \qquad\mbox{ provided } \,s\geq2,}\null\\[1ex]
\lefteqn{2c_3=h_3(c_1^3-c_1)+2h_2c_2(c_1-1)\qquad \mbox{ provided } \,s\geq3,}\null\\[1ex]
\lefteqn{(n-1)c_n=h_n(c_1^n-c_1)+}\null\\
& \quad & \null\displaystyle
\sum_{i=2}^{n-1}h_i\left(\sum_{\overline{u}_n\in
U_{n,i}}B_{\overline{u}_n}c_1^{u_1}\prod_{j=2}^{n-i+1} c_j^{u_j}-(n-i+1)c_{n-i+1}\right),\;n\in|4,s|,
\end{array}
\end{equation}
and we can find here the form of a solution by some recurrent procedure. In the second case, using Lemma~\ref{l3}~{\em(i)}, {\em(iii)},~{\em(iv)} and~\eqref{dn} we rewrite~\eqref{9} in the form
\begin{equation}\label{15}
\begin{array}{lll}
%\lefteqn{c_1^l=c_1 \;\;\;(n=l),}\null\\[1ex]
\lefteqn{(l+1)c_1^lc_2+h_{l+2}c_1^{l+2}=2c_2+h_{l+2}c_1  \qquad\quad(n=l+1 \mbox{ provided }s\geq l+2),}\null\\[1ex]
\lefteqn{\frac{l(l+1)}{2}c_1^{l-1}c_2^2+(l+1)c_1^lc_3+(l+2)c_1^{l+1}c_2+ h_{l+3}c_1^{l+3}}\null\\[1ex]
&  & \null\displaystyle\quad=
3c_2+2h_{l+2}c_2+h_{l+3}c_1  \qquad\qquad\qquad  (n=l+3 \mbox{ provided }s\geq l+3),\\[1ex]
\lefteqn{\sum_{\overline{u}_n\in
U_{n,l+1}}B_{\overline{u}_n}\prod_{j=1}^{n-l}c_j^{u_j}
+\sum_{i=l+2}^{n-1}h_i\sum_{\overline{u}_n\in
U_{n,i}}B_{\overline{u}_n}\prod_{j=1}^{n-i+1}c_j^{u_j}+
h_nc_1^n}\null\\[1ex]
&  & \null\displaystyle\quad =(n-l)c_{n-l}+\sum_{i=l+2}^{n-1}(n-i+1)h_ic_{n-i+1} +h_nc_1,\qquad\quad n\in|l+4,s|.
\end{array}
\end{equation}
Note that $U_{n,l+1}=\widehat{U}^2_{n,l+1}$. By Corollary~\ref{c1} with $l=1$ and $i=l+1$ we get
$$
\begin{array}{lll}
\lefteqn{\sum_{\overline{u}_n\in
U_{n,l+1}}B_{\overline{u}_n}c_1^{u_1}\prod_{j=2}^{n-l}c_j^{u_j}}\null\\
& & \null\displaystyle =
\sum_{\overline{u}_n\in
\widehat{U}^2_{n,l+1}}B_{\overline{u}_n}c_1^{u_1} \prod_{j=2}^{n-l}c_j^{u_j}=
(l+1)c_1^lc_{n-l}+\sum_{\overline{u}_n\in
\overline{U}^2_{n,l+1}}B_{\overline{u}_n}\prod_{j=1}^{n-l-1}c_j^{u_j},
\end{array}
$$
thus from~\eqref{15} we obtain
$$
\begin{array}{lll}
\lefteqn{2c_2-(l+1)c_1^lc_2=h_{l+2}(c_1^{l+2}-c_1),}\null\\
\lefteqn{3c_3-(l+1)c_1^lc_3=\frac{l(l+1)}{2}c_1^{l-1}c_2^2+h_{l+3} (c_1^{l+3}-c_1)+h_{l+2}((l+2)c_1^{l+1}c_2-2c_2),}\\
\lefteqn{(n-l)c_{n-l}-(l+1)c_1^lc_{n-l}=
\sum_{\overline{u}_n\in
\overline{U}^2_{n,l+1}}B_{\overline{u}_n}c_1^{u_1} \prod_{j=2}^{n-l}c_j^{u_j} +h_n(c_1^n-c_1)}\null\\
& & \null\displaystyle \quad+
\sum_{i=l+2}^{n-1}h_i\left(\sum_{\overline{u}_n\in
U_{n,i}}B_{\overline{u}_n}c_1^{u_1}\prod_{j=2}^{n-i+1} c_j^{u_j}-(n-i+1)c_{n-i+1} \right), \,n\in|l+4,s|.
\end{array}
$$
Since $c_1^l=1$, we get
\begin{equation}\label{16}
\begin{array}{lll}
\lefteqn{(1-l)c_2=h_{l+2}(c_1^2-c_1)\qquad\qquad\qquad(n=l+2 \mbox{ provided }s\geq l+2)}\null\\[1ex]
\lefteqn{(2-l)c_3=\frac{l(l+1)}{2}c_1^{l-1}c_2^2+}\null\\
& & \null\displaystyle \quad h_{l+2}(c_1^3-c_1)
+h_{l+2}c_2((l+2)c_1-2) \quad (n=l+3 \mbox{ provided }s\geq l+3),\\
\lefteqn{(n-1-2l)c_{n-l}= \sum_{\overline{u}_n\in
\overline{U}^2_{n,l+1}}B_{\overline{u}_n}c_1^{u_1}\prod_{j=2}^{n-l} c_j^{u_j}+h_n(c_1^n-c_1)+}\null\\
& & \null\displaystyle
\quad \sum_{i=l+2}^{n-1}h_i\left(\sum_{\overline{u}_n\in U_{n,i}}B_{\overline{u}_n}c_1^{u_1}\prod_{j=2}^{n-i+1}c_j^{u_j}-
(n-i+1)c_{n-i+1}\right),\, n\in|l+4,s|.
\end{array}
\end{equation}
Observe that for either $l=1$ or $s\leq 2l$ we also can get the form of a solution by a recurrent procedure. The case $l\geq2$ and $s\geq 2l+1$ is much more complicated. Indeed, for $n=2l+1$ we are able to determine by recurrent method the form of $c_j$ for $j\in\{2,\ldots,l\}$ and then we have $(n-1-2l)c_{n-l}=0c_{l+1}=0$, but we are not able to verify whether the right hand side of this equality is also equal to zero. Therefore we transform the Acz{\'e}l-Jabotinsky differential equation to a~simpler form, where  that problem will not appear.

\section{Normal forms of the third Acz{\'e}-Jabotinsky formal differential equation}

Fix $s\in\mathbb{N}\cup\{\infty\}$, $l\in\mathbb{N}\cup\{0\}$ and $H(x)=x^{l+1}+\sum_{i=l+2}^sh_ix^i\in\ps{k}{x}_s$. We prove that the Acz{\'e}l-Jabotinsky formal differential equation~$({\mathrm{AJ}}_s(H))$ can be transformed (see Fact~\ref{f2}) to possibly simple normal form. Put $\widetilde{H}(x)=x^{l+1}+\delta x^{2l+1}$. We find $T(x)=x+\sum_{i=2}^sv_ix^i$ which transform~$({\mathrm{AJ}}_s(H))$ to
$$
(\widetilde{\Phi}(x))^{l+1}+\delta(\widetilde{\Phi}(x))^{2l+1}=\frac{\dd \widetilde{\Phi}}{\dd x}\cdot\left(x^{l+1}+\delta x^{2l+1}\right),\leqno{({\mathrm{AJ}}_s(\widetilde{H}))}
$$
where for a finite $s$ the above equality should be understood as to hold $\modd x^{s+1}$ with an additional assumption $\delta=0$ for either $l=0$ or $s<2l+1$.

\begin{lemma}\label{l6}
For each $H(x)=x^{l+1}+\sum_{i=l+2}^sh_ix^i\in\ps{k}{x}_s$ there exists $T\in\Gamma_1^s$ which transforms~$({\mathrm{AJ}}_s(H))$ to~$({\mathrm{AJ}}_s(\widetilde{H}))$. In the cases either $l=0$ or $s\leq2l$ it can be done for $\delta=0$ and then the transformation $T$ is unique.
\end{lemma}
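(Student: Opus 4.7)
My plan is to construct $T(x)=x+\sum_{i=2}^s v_i x^i\in\Gamma_1^s$ coefficient by coefficient so as to enforce
$$
H(T(x))=\frac{\dd T}{\dd x}\cdot\widetilde H(x)\modd x^{s+1},
$$
which by Fact~\ref{f2} is equivalent to $T$ conjugating $({\mathrm{AJ}}_s(H))$ into $({\mathrm{AJ}}_s(\widetilde H))$. Expanding the LHS via~\eqref{7} and reading the RHS off directly from $\widetilde H(x)=x^{l+1}+\delta x^{2l+1}$, the coefficient of $x^m$ for $m\in|l+1,s|$ gives
$$
\sum_{i=l+1}^{m}h_i\sum_{\overline{u}_m\in U_{m,i}}B_{\overline{u}_m}\prod_{j=1}^{m-i+1}v_j^{u_j}=(m-l)v_{m-l}+\delta(m-2l)v_{m-2l},
$$
with the conventions $v_1=1$, $v_j=0$ for $j\leq 0$, and $h_{l+1}=1$. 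The equation at $m=l+1$ is a tautology when $l\geq 1$ and, for $l=0$, it independently forces $\delta=0$; so the substantive system is indexed by $m\in|l+2,s|$.

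The next step is to isolate the unknown of highest index, $v_{m-l}$, in each equation. By Lemma~\ref{l3}(ii), for any $i\in|l+2,m|$ and $\overline{u}_m\in U_{m,i}$ one has $u_j=0$ for $j\geq m-i+2$, so the contributions with $i\geq l+2$ depend only on $v_2,\ldots,v_{m-l-1}$ and on $h_{l+2},\ldots,h_m$. Within the $i=l+1$ block, a direct count shows that the unique multi-index with $u_{m-l}>0$ is the one with $u_1=l$, $u_{m-l}=1$ and all other entries zero; by Lemma~\ref{f3} its multinomial coefficient is $B_{\overline{u}_m}=l+1$, producing a term $(l+1)v_{m-l}$. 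Collecting everything else into an expression $R_m$ depending only on $v_2,\ldots,v_{m-l-1}$ and $h_{l+2},\ldots,h_m$, and setting $n:=m-l$, the equation takes the form
$$
(n-l-1)\,v_n=R_{n+l}-\delta(n-l)\,v_{n-l}.
$$

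It then suffices to run this recursion on $n\in|2,s-l|$. If $l=0$, the coefficient $n-l-1=n-1$ is invertible for every $n\geq 2$, and with the enforced $\delta=0$ each $v_n$ is uniquely determined. If $l\geq 1$ and $s\leq 2l$, only indices $n\in|2,l|$ occur; there $n-l-1<0$ is invertible and $v_{n-l}=0$ kills the $\delta$--term, so $v_n$ is uniquely determined and one takes $\delta=0$. If $l\geq 1$ and $s\geq 2l+1$, the coefficient $n-l-1$ is nonzero for $n\in|2,l|\cup|l+2,s-l|$ but vanishes precisely at $n=l+1$; using $v_{l+1-l}=v_1=1$, the equation at that step reduces to $0=R_{2l+1}-\delta$, which uniquely determines $\delta:=R_{2l+1}$ and leaves $v_{l+1}$ as a free parameter, after which the recursion determines every subsequent $v_n$.

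The only non-routine step is the extraction of the leading contribution $(l+1)v_{m-l}$ from the LHS, which relies on Lemma~\ref{l3}(ii) to rule out any $v_j$ with $j\geq m-l$ arising from the $i\geq l+2$ terms. Everything else is linear bookkeeping on a triangular system; the single degenerate index $n=l+1$, which appears only when $l\geq 1$ and $s\geq 2l+1$, is precisely the obstruction that the extra parameter $\delta$ in the normal form $\widetilde H$ is designed to absorb, whereas in the complementary cases $l=0$ or $s\leq 2l$ no such obstruction arises and $T$ is forced to be unique (with $\delta=0$).
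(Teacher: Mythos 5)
Your construction is essentially the paper's own proof: you impose $(H\circ T)(x)=\frac{\dd T}{\dd x}\cdot\widetilde{H}(x)\modd x^{s+1}$, extract the leading contribution $(l+1)v_{m-l}$ from the $i=l+1$ block (this is exactly the content of Corollary~\ref{c1} and Lemma~\ref{l3x}~(ii)), and solve the resulting triangular system, with the singular index $m=2l+1$ absorbed into $\delta$ and $v_{l+1}$ left free --- the same recursion as~\eqref{17}, written uniformly via the convention $v_j=0$ for $j\le 0$ instead of the paper's case split $l=0$, $l=1$, $l\ge2$. One caveat: in the subcase $l\ge1$, $l+1\le s\le 2l$ your recursion (like the paper's) only determines $v_2,\ldots,v_{s-l}$, while the coefficients $v_j$ with $j\in|s-l+1,s|$ never enter either side $\modd x^{s+1}$, so the concluding claim that $T$ is forced to be unique there does not follow from the argument; uniqueness is genuinely obtained only for $l=0$, and this limitation is inherited from the statement rather than introduced by your approach.
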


\begin{proof}
It is enough to find (cf.~Fact~\ref{f2}) $T(x)=x+\sum_{i=2}^sv_ix^i\in\Gamma_1^s$ such that $(H\circ T)(x)=\frac{\dd T}{\dd x}(x^l+\delta x^{2l-1})$. Thus we have (with $h_l=1$)
$$%\begin{equation}\label{17}
\sum_{i=l+1}^sh_i\left(X+\sum_{j=2}^s v_jx^j\right)^i=\left(1+
\sum_{n=2}^skv_nx^{n-1}\right)(x^{l+1}+\delta x^{2l+1}),
$$%\end{equation}
which leads to the following system of equalities
\begin{equation}\label{16a}
\begin{array}{lll}
\lefteqn{\displaystyle \sum_{i=l+1}^{n-1}h_i\sum_{\overline{u}_n\in
U_{n,i}}B_{\overline{u}_n}\prod_{j=2}^{n-i+1}v_j^{u_j} +h_n=(n-l)v_{n-l},\quad
n\in|l+2,2l|,}\null\\[1ex]
\lefteqn{\displaystyle \sum_{i=l+1}^{2l}h_i\sum_{\overline{u}_{2l+1}\in
U_{2l+1,i}}B_{\overline{u}_{2l+1}}\prod_{j=2}^{2l+2-i}v_j^{u_j}+h_{2l+1}}\null\\  & & \null\displaystyle =(l+1)v_{l+1}+\delta, \qquad\qquad   \mbox{ provided }\,s\geq2l+1,\null\\
\lefteqn{\displaystyle \sum_{i=l+1}^{n-1}h_i\sum_{\overline{u}_n\in
U_{n,i}}B_{\overline{u}_n}\prod_{j=2}^{n-i+1}v_j^{u_j}+h_n}\null\\[1ex]
& \qquad\qquad\qquad \ & \null\displaystyle =(n-l)v_{n-l}+ \delta(n-2l)v_{n-2l},\quad n\in|2l+2,s|.
\end{array}
\end{equation}
We consider firstly the case $l=0$. Clearly $|l+2,2l+1|=|2,1|=\emptyset$ and we can put $\delta=0$. On account of~\eqref{dn}, from~\eqref{16a} we obtain
$$
v_n+\sum_{i=2}^{n-1}h_i\sum_{\overline{u}_n\in U_{n,i}}B_{\overline{u}_n}\prod_{j=2}^{n-i+1}v_j^{u_j}+h_n=nv_n,\quad n\in|2,s|,
$$
which implies the folloving recurrent formula
\begin{equation}\label{16b}
\left\{\begin{array}{l}
v_2=h_2\\
\displaystyle v_n=\frac{1}{n-1}\left(\sum_{i=2}^{n-1}h_i\sum_{\overline{u}_n\in U_{n,i}}B_{\overline{u}_n}\prod_{j=2}^{n-i+1}v_j^{u_j}+h_n\right) \quad\mbox{ for }\,n\in|3,s|.
\end{array}\right.
\end{equation}
Assume now $l\geq1$. On account of Corollary~\ref{c1}~{\em(i)} with $l=1$ and $i=l+2$, from~\eqref{16a} we get ($U_{n,l+1}=\widehat{U}^2_{n,l+1}$)
$$
\begin{array}{lll} \lefteqn{(l+1)v_{n-l}+\sum_{\overline{u}_n\in
\overline{U}^2_{n,l+1}}B_{\overline{u}_n}\prod_{j=2}^{n-l-1}v_j^{u_j} +\sum_{i=l+2}^{n-1}h_i\sum_{\overline{u}_n\in
U_{n,i}}B_{\overline{u}_n}\prod_{j=2}^{n-i+1}v_j^{u_j}}\null\\[2ex]
& & \null\displaystyle
+h_n=(n-l)v_{n-l},\qquad\qquad\qquad\quad   n\in|l+2,2l|,\\[1ex]
\lefteqn{(l+1)v_{l+1}+\sum_{\overline{u}_{2l+1}\in
\overline{U}^2_{2l+1,l+1}}B_{\overline{u}_n}\prod_{j=2}^{l} v_j^{u_j}+\sum_{i=l+2}^{2l}h_i\sum_{\overline{u}_{2l+1}\in
U_{2l+1,i}}B_{\overline{u}_n}\prod_{j=2}^{2l+2-i} v_j^{u_j}}\null\\[2ex]
& & \null\displaystyle +h_{2l+1} =(l+1)v_{l+1}+ \delta, \qquad\qquad   \mbox{ provided }\,s\geq2l+1\\[1ex]
\lefteqn{(l+1)v_{n-l}+\sum_{\overline{u}_n\in
\overline{U}^2_{n,l+1}}B_{\overline{u}_n}\prod_{j=2}^{n-l-1}v_j^{u_j} +\sum_{i=l+2}^{n-1}h_i\sum_{\overline{u}_n\in
U_{n,i}}B_{\overline{u}_n}\prod_{j=2}^{n-i+1}v_j^{u_j}}\null\\[2ex]
& \qquad\qquad\qquad & \null\displaystyle +h_n=(n-l+1)v_{n-l+1}+ \delta(n-2l)v_{n-2l}, \quad n\in|2l+2,s|,
\end{array}
$$
or, equivalently
\begin{equation}\label{17}
\begin{array}{lll}
\lefteqn{(n-2l-1)v_{n-l}=\sum_{\overline{u}_n\in
\overline{U}^2_{n,l+1}}B_{\overline{u}_n}\prod_{j=2}^{n-l-1}v_j^{u_j}}\null\\
& & \null\displaystyle+
\sum_{i=l+2}^{n-1}h_i\sum_{\overline{u}_n\in
U_{n,i}}B_{\overline{u}_n}\prod_{j=2}^{n-i+1}v_j^{u_j}+h_n,\qquad\qquad n\in|l+2,2l|,\\[1ex]
\lefteqn{\displaystyle
\delta=\sum_{\overline{u}_{2l+1}\in
\overline{U}^2_{2l+1,l+1}}B_{\overline{u}_n}\prod_{j=2}^{l}v_j^{u_j}}\null\\
& & \null\displaystyle +
\sum_{i=l+2}^{2l}h_i\sum_{\overline{u}_{2l+1}\in
U_{2l+1,i}}B_{\overline{u}_n}\prod_{j=2}^{2l+2-i}v_j^{u_j}+h_{2l+1}, \quad \mbox{ provided }\,s\geq2l+1,\\[1ex]
\lefteqn{(n-2l-1)v_{n-l}=\sum_{\overline{u}_n\in
\overline{U}^2_{n,l+1}}B_{\overline{u}_n}\prod_{j=2}^{n-l-1}v_j^{u_j}}\null\\
& & \null\displaystyle +\sum_{i=l+2}^{n-1}h_i\sum_{\overline{u}_n\in
U_{n,i}}B_{\overline{u}_n}\prod_{j=2}^{n-i+1}v_j^{u_j}+h_n-
\delta(n-2l)v_{n-2l},\; n\in|2l+2,s|.
\end{array}
\end{equation}
Lut us consider the case $l=1$. We have $|l+2,2l|=|3,2|=\emptyset$ and $\widetilde{U}_{3,2}=\emptyset$. Moreover, for $n=4$ we have
$$
\sum_{\overline{u}_4\in
\overline{U}^2_{4,2}}B_{\overline{u}_4}\prod_{j=2}^{4-2}v_j^{u_j}+\sum_{i=3}^3h_i \sum_{\overline{u}_4\in
U_{4,i}}B_{\overline{u}_4}\prod_{j=2}^{4-i+1}v_j^{u_j}+h_4=v_2^2+3h_3v_2+h_4,
$$
hence from~\eqref{17} we obtain
$$
\begin{array}{lll}
\lefteqn{\delta=h_3, \quad\mbox{ provided }\,s\geq3,}\null\\[1ex]
\lefteqn{v_3=v_2^2+3h_3v_2+h_4-2\delta v_{2},\quad\mbox{ provided }\,s\geq4, }\null\\
\lefteqn{(n-3)v_{n-1}=\sum_{\overline{u}_n\in
\overline{U}^2_{n,2}}B_{\overline{u}_n}\prod_{j=2}^{n-2}v_j^{u_j}}\null\\
& \qquad\qquad\quad & \null\displaystyle +\sum_{i=3}^{n-1}h_i\sum_{\overline{u}_n\in
U_{n,i}}B_{\overline{u}_n}\prod_{j=2}^{n-i+1}v_j^{u_j}+h_n-
(n-2)\delta v_{n-2},\; n\in|5,s|.
\end{array}
$$
Thus $\delta=h_3$, $v_2$ can be arbitrarily fixed. Moreover, we obtain the following recurrence formula
$$
\left\{\begin{array}{lll}
\lefteqn{v_3=v_2^2+h_3v_2+h_4,}\null\\
\lefteqn{v_{n-1}=\frac{1}{n-3}\left(\sum_{\overline{u}_n\in
\overline{U}^2_{n,2}}B_{\overline{u}_n}\prod_{j=2}^{n-2}v_j^{u_j}\right.}\null\\
& \qquad\; & \displaystyle\null \left.+ \sum_{i=3}^{n-1}h_i\sum_{\overline{u}_n\in
U_{n,i}}B_{\overline{u}_n}\prod_{j=2}^{n-i+1}v_j^{u_j}+h_n-
(n-2)h_3 v_{n-2}\right)\;\mbox{ for }\, n\in|5,s|,
\end{array}
\right.
$$
and we fix $v_n\in\boldsymbol{k}$ arbitrarily.

Finally, assume that $l\geq2$. We determine $v_k$ for $k\in|2,l|$ recursively. Indeed, $\widetilde{U}_{l+2,l+1}=\emptyset$, so from~\eqref{17} for $n=l+2$ we get
$$
\left\{\begin{array}{lll}
\lefteqn{v_2=-\frac{1}{l-1}h_{l+2},}\null\\
\lefteqn{v_{n-l}=-\frac{1}{2l+1-n}\left( \sum_{\overline{u}_n\in
\overline{U}^2_{n,l+1}}B_{\overline{u}_n}\prod_{j=2}^{n-l-1}v_j^{u_j}\right.}\null\\
& \qquad\qquad & \null\displaystyle \left.+
\sum_{i=l+2}^{n-1}h_i\sum_{\overline{u}_n\in
U_{n,i}}B_{\overline{u}_n}\prod_{j=2}^{n-i+1}v_j^{u_j}+h_n \right)\quad\mbox{ for }\,n\in|l+3,2l|.
\end{array}\right.
$$
Next, put (see~\eqref{17} for $n=2l+1$)
$$
\delta=\sum_{\overline{u}_{2l+1}\in
\overline{U}^2_{2l+1,l+1}}B_{\overline{u}_n}\prod_{j=2}^{l+1}v_j^{u_j}+
\sum_{k=l+2}^{2l+1}h_k\sum_{\overline{u}_{2l+1}\in
U_{2l+1,k}}B_{\overline{u}_n}\prod_{j=2}^{2l+2-k}v_j^{u_j}
$$
provided $s\geq2l+1$ and fix $v_l\in\boldsymbol{k}$ arbitrarily. Finally, from~\eqref{17} we find
$$
\begin{array}{lll}
\lefteqn{
v_{n-l}=\frac{1}{n-2l-1}\left(\sum_{\overline{u}_n\in\overline{U}^2_{n,l+1}} B_{\overline{u}_n}\prod_{j=2}^{n-l-1}v_j^{u_j} +\sum_{i=l+2}^{n-1}h_i\sum_{\overline{u}_n\in
U_{n,i}}B_{\overline{u}_n}\prod_{j=2}^{n-i+1}v_j^{u_j}\right.}\null\\[1ex]
& \qquad\qquad\qquad\qquad\qquad\qquad & \null\displaystyle \left. +\rule{0pt}{7mm}h_n-
\delta(n-2l)v_{n-2l}\right)\quad\mbox{ for }\, n\in|2l+2,s|,
\end{array}
$$
and we fix $v_n\in\boldsymbol{k}$ arbitrarily for $n\in|s-l+1,s|$ whenever $s<\infty$.
\end{proof}

\section{Groups with simple algebraic structure}

We give here technical results connected with direct and semi-direct products of some groups. We begin with

\begin{fact}\label{fxy1}
Let $(G,\cdot)$ be a group and let $H_1,H_2\subset G$ be subgroups such that $G=H_1\cdot H_2$ and $H_1\cap H_2=\{1\}$.
\begin{enumerate}[(i)]
\item If $H_2$ is a normal divisor in~$G$, then $G=H_1\overline{\odot} H_2$ is a semi-direct product of the subgroups $H_1$ and $H_2$.
\item If $H_1,H_2$ are normal in~$G$, then $G=H_1\odot H_2$ is a direct product of $H_1$ and~$H_2$.
\item If $h_1h_2=h_2h_1$ for all $h_1\in H_1$ and $h_2\in H_2$, then both $H_1$ and $H_2$ are normal divisors in~$G$.
\end{enumerate}
\end{fact}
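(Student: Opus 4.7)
The plan is to base every item on the uniqueness of factorization implied by $G = H_1 H_2$ and $H_1 \cap H_2 = \{1\}$. Indeed, if $h_1 h_2 = h_1' h_2'$ with $h_i, h_i' \in H_i$, then $(h_1')^{-1} h_1 = h_2' h_2^{-1} \in H_1 \cap H_2 = \{1\}$, so the decomposition of every $g \in G$ as $h_1 h_2$ is unique. This turns the set-theoretic equality $G = H_1 \cdot H_2$ into an algebraic internal product, and is the workhorse in all three parts.

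For part~(i), with $H_2$ normal in $G$, I would verify the defining identity of an internal semi-direct product: for $g = h_1 h_2$ and $g' = h_1' h_2'$ one has
$$
g g' = h_1 h_1' \cdot \bigl((h_1')^{-1} h_2 h_1'\bigr) \cdot h_2',
$$
and the middle factor lies in $H_2$ precisely because $H_2$ is normal. Together with uniqueness this shows that the map $\varphi\colon H_1 \to \mathrm{Aut}(H_2)$ defined by $\varphi(h_1)(h_2) = h_1 h_2 h_1^{-1}$ is a well-defined homomorphism, which is exactly the content of the internal semi-direct product structure $H_1 \overline{\odot} H_2$.

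For part~(ii), with both $H_1$ and $H_2$ normal, I would invoke the standard commutator trick. For $h_1 \in H_1$ and $h_2 \in H_2$, the element $[h_1, h_2] = h_1 h_2 h_1^{-1} h_2^{-1}$ can be grouped both as $h_1 (h_2 h_1^{-1} h_2^{-1}) \in H_1$ (by normality of $H_1$) and as $(h_1 h_2 h_1^{-1}) h_2^{-1} \in H_2$ (by normality of $H_2$), hence lies in $H_1 \cap H_2 = \{1\}$. Thus $H_1$ and $H_2$ commute elementwise, and combined with the unique factorization this yields the internal direct product $H_1 \odot H_2$.

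For part~(iii), assuming $h_1 h_2 = h_2 h_1$ for all $h_i \in H_i$, I would fix $g = h_1 h_2 \in G$ and $k \in H_1$, and compute $g k g^{-1} = h_1 h_2 k h_2^{-1} h_1^{-1} = h_1 k h_1^{-1} \in H_1$, using that $h_2$ commutes with $k$; the symmetric calculation proves normality of $H_2$. No single step is a genuine obstacle; the one place where care is needed is the parenthesisation of the commutator in part~(ii), which must be chosen so that each of the two memberships follows immediately from the definition of a normal subgroup.
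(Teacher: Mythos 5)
Your argument is correct in all three parts: the uniqueness of the factorization $g=h_1h_2$, the regrouping $gg'=h_1h_1'\bigl((h_1')^{-1}h_2h_1'\bigr)h_2'$ for (i), the commutator $[h_1,h_2]\in H_1\cap H_2=\{1\}$ for (ii), and the conjugation computation $gkg^{-1}=h_1kh_1^{-1}$ for (iii) are exactly the standard verifications, and each step is justified. The paper itself states this Fact without proof, treating it as elementary group theory, so there is no proof to compare against; your write-up simply supplies the omitted standard argument, and it does so correctly.
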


\begin{lemma}\label{lxy1}
Let $(G,\cdot)$ be a group and let $H\subset G$ be a subgroup. Assume that $\varphi:G\to H$ is a homomorphism such that $\varphi|_H=\id_H$. Then $G=H\overline{\odot}\ker\varphi$. Moreover, if $h_1h_2=h_2h_1$ for all $h_1\in H$ and $h_2\in\ker\varphi$, then $G=H\odot\ker\varphi$.
\end{lemma}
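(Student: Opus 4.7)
The plan is to verify the hypotheses of Fact~\ref{fxy1} with $H_1=H$ and $H_2=\ker\varphi$, and then invoke parts~(i), (iii), and~(ii) in turn. The key observation is that the retraction property $\varphi|_H=\id_H$ forces $\varphi$ to be idempotent (meaning $\varphi\circ\varphi=\varphi$), which is exactly what is needed to split every element of $G$ into an $H$-part and a $\ker\varphi$-part in a controlled way.

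First I would check that $H\cap\ker\varphi=\{1\}$: if $g$ lies in both subgroups, then on one hand $g=\varphi(g)$ (by $\varphi|_H=\id_H$) and on the other hand $\varphi(g)=1$ (by $g\in\ker\varphi$), hence $g=1$. Next I would establish the factorization $G=H\cdot\ker\varphi$. For an arbitrary $g\in G$ write
$$
g=\varphi(g)\cdot\bigl(\varphi(g)^{-1}g\bigr).
$$
Clearly $\varphi(g)\in H$, and because $\varphi$ is a homomorphism with $\varphi|_H=\id_H$,
$$
\varphi\bigl(\varphi(g)^{-1}g\bigr)=\varphi(\varphi(g))^{-1}\varphi(g)=\varphi(g)^{-1}\varphi(g)=1,
$$
so the second factor lies in $\ker\varphi$. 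Since $\ker\varphi$ is a normal subgroup of $G$ (as the kernel of a group homomorphism), Fact~\ref{fxy1}~(i) applies and yields $G=H\overline{\odot}\ker\varphi$.

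For the second assertion, assume in addition that every element of $H$ commutes with every element of $\ker\varphi$. Then Fact~\ref{fxy1}~(iii) forces both $H$ and $\ker\varphi$ to be normal in $G$, and Fact~\ref{fxy1}~(ii) upgrades the decomposition to a direct product $G=H\odot\ker\varphi$. There is no real obstacle here; the only point that could trip one up is the identity $\varphi(\varphi(g))=\varphi(g)$, which must be invoked explicitly to show that the second factor in the above splitting actually belongs to $\ker\varphi$.
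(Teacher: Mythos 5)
Your proposal is correct and follows essentially the same route as the paper: the splitting $g=\varphi(g)\cdot(\varphi(g)^{-1}g)$, the use of $\varphi|_H=\id_H$ to see the second factor lies in $\ker\varphi$, the triviality of $H\cap\ker\varphi$, and the appeal to Fact~\ref{fxy1}~(i), then (iii) and (ii) for the direct-product statement. The only cosmetic difference is that you phrase the key identity as idempotency $\varphi\circ\varphi=\varphi$, while the paper applies $\varphi|_H=\id_H$ directly to $\varphi(g)^{-1}\in H$; these are the same observation.
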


\begin{proof}
For every $g\in G$ we have $\varphi(g)^{-1}\in H$, hence $\varphi(\varphi(g)^{-1})=\varphi(g)^{-1}$ and $\varphi\left(\varphi(g)^{-1}\cdot g\right)=\varphi\left(\varphi(g)^{-1}\right)\cdot\varphi(g)= \varphi(g)^{-1}\cdot\varphi(g)=1$, thus
$$
g=\varphi(g)\cdot\left(\varphi(g)^{-1}\cdot g\right)\in H\cdot\ker\varphi,
$$
so $G=H\cdot\ker\varphi$. Moreover, $\ker\varphi$ is a normal divisor in~$G$ and $\varphi(h)=1$ for $h\in H$ implies $h=1$, so $H\cap\ker\varphi=\{1\}$. On account of Fact~\ref{fxy1}~{\em (i)} we get $G=H\overline{\odot}\ker\varphi$. Finally, the last statement if a consequence of Fact~\ref{fxy1}~{\em (iii)} and~{\em (ii)}.
\end{proof}

We consider here some groups with a simple algebraic structure, which appear in an algebraic characterizations of groups of solutions of the third Acz{\'e}l-Jabotinsky formal differential equation. In these characterizations we use additive group $(\boldsymbol{k},+)$ as well as multiplicative groups $(\boldsymbol{k}^*,\cdot)$ and $(\boldsymbol{E}_l,\cdot)$ with $l\in\mathbb{N}$. In a~product $\boldsymbol{E}_l\times\boldsymbol{k}$ we introduce an operation $\diamond:(\boldsymbol{E}_l\times\boldsymbol{k})\times(\boldsymbol{E}_l \times\boldsymbol{k})\to\boldsymbol{E}_l\times\boldsymbol{k}$,
$$
(c_1,c_2)\diamond(d_1,d_2)=(c_1d_1,c_1d_2+d_1c_2)\qquad \mbox{ for }\,(c_1,c_2),(d_1,d_2)\in\boldsymbol{E}_l\times\boldsymbol{k}.
$$
Obviously $(\boldsymbol{E}_l\times\boldsymbol{k},\diamond)$ is a commutative group which is, in fact, a direct product $(\boldsymbol{E}_l\times\{0\})\diamond(\{1\}\times\boldsymbol{k})$ of the subgroups $\boldsymbol{E}_l\times\{0\}$ and $\{1\}\times\boldsymbol{k}$.

For fixed $l\in\mathbb{N}$ we put
$$
\begin{array}{l}
\displaystyle \Gamma^{l,l}=\{c_1x+c_{l+1}x^{l+1}\in\Gamma^{l+1}: c_1\in\boldsymbol{E}_l, c_l\in\boldsymbol{k}\,\},\\ [.5ex]
\displaystyle \Gamma^{l,l}_1=\{x+c_{l+1}x^{l+1}\in\Gamma^{l+1,l+1}: c_{l+1}\in\boldsymbol{k}\,\}.
\end{array}
$$

\begin{lemma}\label{lx3}
For $l\in\mathbb{N}$ we have the following properties:
\begin{enumerate}[(i)]
\item $\Gamma^{l,l}\subset\Gamma^{l+1}$ is a commutative subgroup which is isomorphic to $(\boldsymbol{E}_l\times\boldsymbol{k},\diamond)$,
\item $\Gamma_1^{l,l}$ is a subgroup in $\Gamma^{l,l}$ which is isomorphi to $(\boldsymbol{k},+)$,
\item $\Gamma^{l,l}={\mathcal{L}}_l\circledcirc\Gamma^{l,l}_1$ is a~direct product of the subgroups ${\mathcal{L}}_l$ and $\Gamma^{l,l}_1$.
\end{enumerate}
\end{lemma}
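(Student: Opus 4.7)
The plan is to reduce everything to one explicit composition formula in $\Gamma^{l+1}$ and then transfer all three claims across the resulting identification of $\Gamma^{l,l}$ with $(\boldsymbol{E}_l\times\boldsymbol{k},\diamond)$. Concretely, I would pick $f(x)=c_1x+c_{l+1}x^{l+1}$ and $g(x)=d_1x+d_{l+1}x^{l+1}$ in $\Gamma^{l,l}$ and compute $(f\circ g)(x)=c_1g(x)+c_{l+1}g(x)^{l+1}$ in $\ps{k}{x}_{l+1}$, i.e.\ modulo $x^{l+2}$. By the binomial theorem every term of $g(x)^{l+1}$ other than $d_1^{l+1}x^{l+1}$ has degree at least $2l+1\ge l+2$ (this uses $l\ge 1$), so those terms vanish in $\Gamma^{l+1}$. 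The key observation is that $d_1\in\boldsymbol{E}_l$ forces $d_1^l=1$ and hence $d_1^{l+1}=d_1$, which yields
\[
(f\circ g)(x)=c_1d_1\,x+(c_1d_{l+1}+d_1c_{l+1})\,x^{l+1}.
\]

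Since $c_1d_1\in\boldsymbol{E}_l$ this lies in $\Gamma^{l,l}$, so $\Gamma^{l,l}$ is closed under substitution; the equations $c_1d_1=1$ and $c_1d_{l+1}+d_1c_{l+1}=0$ uniquely determine the inverse inside $\Gamma^{l,l}$. The displayed formula says precisely that $\psi:f\mapsto(c_1,c_{l+1})$ is a group homomorphism from $\Gamma^{l,l}$ onto $(\boldsymbol{E}_l\times\boldsymbol{k},\diamond)$, and $\psi$ is trivially bijective. Commutativity of $\Gamma^{l,l}$ is inherited from $(\boldsymbol{E}_l\times\boldsymbol{k},\diamond)$, which settles~\emph{(i)}.

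Part~\emph{(ii)} is then immediate: $\Gamma^{l,l}_1=\psi^{-1}(\{1\}\times\boldsymbol{k})$, and on this factor $\diamond$ degenerates to $(1,c_{l+1})\diamond(1,d_{l+1})=(1,c_{l+1}+d_{l+1})$, giving $\Gamma^{l,l}_1\cong(\boldsymbol{k},+)$. For~\emph{(iii)} I would invoke Fact~\ref{fxy1}: one has $\mathcal{L}_l=\psi^{-1}(\boldsymbol{E}_l\times\{0\})\subset\Gamma^{l,l}$ and $\mathcal{L}_l\cap\Gamma^{l,l}_1=\{L_1\}$, while the factorisation $c_1x+c_{l+1}x^{l+1}=L_{c_1}\circ(x+c_1^{-1}c_{l+1}x^{l+1})$ shows $\Gamma^{l,l}=\mathcal{L}_l\circ\Gamma^{l,l}_1$. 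Since $\Gamma^{l,l}$ is commutative, both subgroups are normal by Fact~\ref{fxy1}~\emph{(iii)}, and Fact~\ref{fxy1}~\emph{(ii)} delivers the direct product decomposition $\Gamma^{l,l}=\mathcal{L}_l\circledcirc\Gamma^{l,l}_1$.

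The only place where anything is actually computed is the expansion of $g(x)^{l+1}$ modulo $x^{l+2}$; the main (mild) obstacle is recognising that the identity $d_1^{l+1}=d_1$, together with the vanishing of all higher-degree binomial terms, is exactly what produces the operation $\diamond$, so that the isomorphism in~\emph{(i)} is natural rather than coincidental.
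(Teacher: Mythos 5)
Your proposal is correct and follows essentially the same route as the paper: compute $(f\circ g)(x)\equiv c_1d_1x+(c_1d_{l+1}+d_1c_{l+1})x^{l+1}\pmod{x^{l+2}}$ using $d_1^{l+1}=d_1$, read off the isomorphism with $(\boldsymbol{E}_l\times\boldsymbol{k},\diamond)$, identify $\Gamma^{l,l}_1$ as the kernel of $\pi^{l+1}_1$ restricted to $\Gamma^{l,l}$, and use the factorisation $f=L_{c_1}\circ(L_{c_1^{-1}}\circ f)$ for the direct product. Your explicit appeal to Fact~\ref{fxy1} for part \emph{(iii)} only makes transparent what the paper leaves implicit.
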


\begin{proof}
Let $f(x)=c_1x+c_{l+1}x^{l+1},g(x)=d_1x+d_{l+1}x^{l+1}\in \Gamma^{l,l}$. Then $d_1^{l+1}=d_1$ and
$$
(f\circ g)(x)=(c_1d_1)x+(c_1d_{l+1}+d_1c_{l+1})x^{l+1}\mod x^{l+2},
$$
so $f\circ g\in\Gamma^{l,l}$ and $f^{\circ-1}(x)=c_1^{-1}x+c_1^{-1}c_{l+1}x^{l+1}\in\Gamma^{l,l}$. Hence $\Lambda:\boldsymbol{E}_l\times\boldsymbol{k}\to\Gamma^{l,l}$, $\Lambda(c_1,c_{l+1})=c_1x+c_{l+1}x^{l+1}$ for $(c_1,c_{l+1})\in \boldsymbol{E}_l\times\boldsymbol{k}$ is a groups isomorphism. Furthermore, $\Gamma_1^{l,l}=\ker\pi^{l+1}_1|_{\Gamma^{l,l}}$ and clearly $\Gamma^{l,l}_1\cong\boldsymbol{k}$. Finally, every $f\in\Gamma^{l,l}$ with $\pi^{l+1}_1f=L_c\in{\mathcal{L}}_l$ we can uniquely write in the form $f=L_c\circ\left(L_{c^{-1}}\circ f\right)$ and $L_{c^{-1}}\circ f\in\Gamma^{l,l}_1$, which finishes the proof.
\end{proof}

In a description of solutions of the Acz{\'e}l-Jabotinsky differential equation we use also the following groups. For $l,s\in\mathbb{N}$ with and $2l+1\leq s$ let us consider the product $\boldsymbol{E}_l\times\boldsymbol{k}^{l+1}$ with an operation
$\overline{\diamond}:(\boldsymbol{E}_l\times\boldsymbol{k}^{l+1}) \times(\boldsymbol{E}_l\times\boldsymbol{k}^{l+1})\to\boldsymbol{E}_{l+1} \times\boldsymbol{k}^{l+1}$,
$$
(c_1,(c_j)_{j\in\{l\}\cup|s-l+1,s|})\overline{\diamond}(d_1,(d_j)_{j\in\{l\} \cup|s-l+1,s|}) =(c_1d_1,(c_1d_j+d_1^jc_j)_{j\in\{l\}\cup|s-l+1,s|})
$$
for $(c_1,(c_j)_{j\in\{l\}\cup|s-l+1,s|}),(d_1,(d_j)_{j\in\{l\}\cup|s-l+1,s|}) \in\boldsymbol{E}_l\times\boldsymbol{k}^{l+1}$.

Similarly, if $l,s\in\mathbb{N}$ with $l+1\leq s\leq2l$, we use the product $\boldsymbol{E}_l\times\boldsymbol{k}^l$ with an operation
$\widehat{\diamond}:(\boldsymbol{E}_l\times\boldsymbol{k}^l) \times(\boldsymbol{E}_l\times\boldsymbol{k}^l)\to \boldsymbol{E}_l\times\boldsymbol{k}^l$,
$$
(c_1,(c_j)_{j\in|s-l+1,s|})\widehat{\diamond}(d_1,(d_j)_{j\in|s-l+1,s|}) =(c_1d_1,(c_1d_j+d_1^jc_j)_{j\in|s-l+1,s|})
$$
for $(c_1,(c_j)_{j\in|s-l+1,s|}),(d_1,(d_j)_{j\in|s-l+1,s|}) \in\boldsymbol{E}_l\times\boldsymbol{k}^l$. Observe that for $l\geq2$ the groups $(\boldsymbol{E}_l\times\boldsymbol{k}^{l+1},\overline{\diamond})$ and $(\boldsymbol{E}_l\times\boldsymbol{k}^l,\overline{\diamond})$ need not be commutative. This depends on the cardinality of the group $\boldsymbol{E}_l$. More precisely, if $\{1\}\subsetneq\boldsymbol{E}_l$ then $d_1^j\neq d_1$ for any $1\neq d_1\in\boldsymbol{E}_l$ and some $j\in|s-l+1,s|$.

\section{The algebraic structure of groups of solutions}
\label{algstruct}

We describe now algebraic properties of groups of solutions of~$({\mathrm{AJ}}_s(H))$ for $s\in\mathbb{N}\cup\{\infty\}$. An important tool is here the inner automorphism ${\mathcal{A}}_T:\Gamma^s\to\Gamma^s$, ${\mathcal{A}}_T(F)=T^{-1}\circ F\circ T$ for $F\in\Gamma^s$, where $T\in\Gamma^s_1$ is fixed. We begin with an obvious consequence of Fact~\ref{f2} and Lemma~\ref{l6}.

\begin{corollary}\label{cstrAJ}
Fix $s\in\mathbb{N}\cup\{\infty\}$, $l\in|0,s-1|$, $H(x)=x^{l+1}+\sum_{i=l+2}^sh_ix^i\in\ps{k}{x}_s$. Let $T\in\Gamma^s_1$ transforms~$({\mathrm{AJ}}_s(H))$ to~$({\mathrm{AJ}}_s(\widetilde{H}))$, where $\widetilde{H}(x)=x^{l+1}+\delta x^{2l+1}$ with suitable $\delta\in\boldsymbol{k}$. Then ${\mathcal{A}}_T|_{{\mathcal{S}}({\mathrm{AJ}}_s(H))}: {\mathcal{S}}({\mathrm{AJ}}_s(H))\to {\mathcal{S}}({\mathrm{AJ}}_s(\widetilde{H}))$ is a groups isomorphism.
\end{corollary}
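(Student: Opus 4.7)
The plan is essentially a direct assembly of Fact~\ref{f2} and Lemma~\ref{l6}, together with the elementary fact that inner automorphisms in a group are group isomorphisms; the only things to verify are that $\mathcal{A}_T$ sends one solution set \emph{into} the other, and that the correspondence is \emph{onto} and bijective.

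First I would record that, since $T\in\Gamma^s_1\subset\Gamma^s$, the map $\mathcal{A}_T\colon\Gamma^s\to\Gamma^s$ given by $\mathcal{A}_T(F)=T^{\circ-1}\circ F\circ T$ is the inner automorphism of the group $(\Gamma^s,\circ)$ determined by $T$; in particular it is a group homomorphism with inverse $\mathcal{A}_{T^{\circ-1}}$. Hence its restriction to any subset of $\Gamma^s$ is automatically compatible with $\circ$, and, once the image is identified, will automatically be a group homomorphism between the corresponding subgroups.

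Next I would invoke Fact~\ref{f2}: by the very choice of $T$ (supplied by Lemma~\ref{l6}) we have $\widetilde{H}(x)=\bigl(\tfrac{\dd T}{\dd x}\bigr)^{-1}(H\circ T)(x)$, and Fact~\ref{f2} asserts precisely that
$$
\Phi\in\mathcal{S}(\mathrm{AJ}_s(H))\quad\Longleftrightarrow\quad \mathcal{A}_T(\Phi)=T^{\circ-1}\circ\Phi\circ T\in\mathcal{S}(\mathrm{AJ}_s(\widetilde{H})).
$$
The forward implication shows that $\mathcal{A}_T$ maps $\mathcal{S}(\mathrm{AJ}_s(H))$ into $\mathcal{S}(\mathrm{AJ}_s(\widetilde{H}))$, and, combined with the fact that $\mathcal{S}(\mathrm{AJ}_s(H))$ is a group under $\circ$ (Fact~\ref{f1}), the restriction $\mathcal{A}_T|_{\mathcal{S}(\mathrm{AJ}_s(H))}$ is a group homomorphism into $\mathcal{S}(\mathrm{AJ}_s(\widetilde{H}))$.

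For surjectivity I would use the reverse implication of the same equivalence: given any $\widetilde{\Phi}\in\mathcal{S}(\mathrm{AJ}_s(\widetilde{H}))$, set $\Phi:=T\circ\widetilde{\Phi}\circ T^{\circ-1}=\mathcal{A}_T^{-1}(\widetilde{\Phi})$; then $\widetilde{\Phi}=\mathcal{A}_T(\Phi)$ and by Fact~\ref{f2} (reverse direction) $\Phi\in\mathcal{S}(\mathrm{AJ}_s(H))$. Injectivity is immediate because $\mathcal{A}_T$ is already injective on all of $\Gamma^s$. Putting these pieces together yields that $\mathcal{A}_T|_{\mathcal{S}(\mathrm{AJ}_s(H))}$ is a bijective homomorphism onto $\mathcal{S}(\mathrm{AJ}_s(\widetilde{H}))$, i.e.\ a group isomorphism. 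I do not foresee any real obstacle here: there is no nontrivial computation, and the ``hard part'' — namely producing a $T$ that genuinely conjugates $H$ to the normal form $\widetilde{H}$ — has already been carried out in Lemma~\ref{l6}, while the equivalence of solutions under this conjugation is exactly Fact~\ref{f2}.
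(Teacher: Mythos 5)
Your proposal is correct and follows exactly the route the paper intends: the paper states Corollary~\ref{cstrAJ} as an immediate consequence of Fact~\ref{f2} (the equivalence $\Phi\in{\mathcal{S}}({\mathrm{AJ}}_s(H))\Leftrightarrow T^{\circ-1}\circ\Phi\circ T\in{\mathcal{S}}({\mathrm{AJ}}_s(\widetilde{H}))$ for $T\in\Gamma^s_1$) together with Lemma~\ref{l6} supplying such a $T$, which is precisely what you assemble, adding only the standard observation that the inner automorphism ${\mathcal{A}}_T$ of $\Gamma^s$ is bijective and compatible with $\circ$. No gaps; your write-up just makes explicit what the paper leaves as obvious.
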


Let us fix firstly $H(x)=x+\sum_{i=2}^sh_ix^i\in\ps{k}{x}_s$. We prove that the group ${\mathcal{S}}({\mathrm{AJ}}_s(H))$ of solutions of~$({\mathrm{AJ}}_s(H))$ is isomorphic then to $(\boldsymbol{k}^\star,\cdot)$. From Corollary~\ref{cstrAJ} and Lemma~\ref{l6} we get

\begin{theorem}\label{tNAJ1}
Fix $s\in\mathbb{N}\cup\{\infty\}$, $H\in\Gamma^s_1$ and let $T\in\Gamma^s_1$ transforms~$({\mathrm{AJ}}_s(H))$ to~$({\mathrm{AJ}}_s(\widetilde{H}))$, where $\widetilde{H}=L_1$. Then
\begin{enumerate}[(i)]
\item ${\mathcal{A}}_T({\mathcal{S}}({\mathrm{AJ}}_s(H)))= {\mathcal{S}}({\mathrm{AJ}}_s(\widetilde{H}))= {\mathcal{S}}({\mathrm{AJ}}_s(L_1))={\mathcal{L}}\cong (\boldsymbol{k}^\star,\cdot)$,
\item ${\mathcal{S}}({\mathrm{AJ}}_s(H))= {\mathcal{A}}_{T^{-1}}({\mathcal{L}})=\left\{ T\circ L_c\circ T^{-1}:c\in\boldsymbol{k}^\star\right\}\cong(\boldsymbol{k}^\star,\cdot)$.
\end{enumerate}
\end{theorem}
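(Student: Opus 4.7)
The plan is to exploit the reduction already set up by Corollary~\ref{cstrAJ} together with the explicit transformation of Lemma~\ref{l6} in order to reduce everything to the trivial normal form $\widetilde{H} = L_1$, solve that case by inspection, and then translate back. First, since $H \in \Gamma_1^s$ corresponds exactly to the case $l = 0$ in the notation of Lemma~\ref{l6}, that lemma applies with $\delta = 0$ and produces the required $T \in \Gamma_1^s$; the resulting $\widetilde{H}$ is literally $x = L_1$. Corollary~\ref{cstrAJ} then guarantees that ${\mathcal{A}}_T$ restricts to a group isomorphism from ${\mathcal{S}}({\mathrm{AJ}}_s(H))$ onto ${\mathcal{S}}({\mathrm{AJ}}_s(L_1))$, so only the latter group needs to be identified.

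Next I would determine ${\mathcal{S}}({\mathrm{AJ}}_s(L_1))$ by a direct coefficient comparison. The equation $({\mathrm{AJ}}_s(L_1))$ reads $\Phi(x) = x \cdot \frac{\dd\Phi}{\dd x}(x)$ (modulo $x^{s+1}$ when $s$ is finite). Writing $\Phi(x) = \sum_{n=1}^s c_n x^n$ and matching coefficients of $x^n$ yields $c_n = n c_n$, hence $(n-1) c_n = 0$ for every $n \in |1, s|$. Therefore $c_n = 0$ for all $n \geq 2$, so $\Phi = L_{c_1}$ with $c_1 \in \boldsymbol{k}^\star$ (nonzero because $\ord \Phi = 1$). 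This gives ${\mathcal{S}}({\mathrm{AJ}}_s(L_1)) = {\mathcal{L}}$, and the isomorphism ${\mathcal{L}} \cong (\boldsymbol{k}^\star, \cdot)$ is then immediate from $L_{c_1} \circ L_{c_2} = L_{c_1 c_2}$. This finishes part~(i).

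Part~(ii) follows formally by applying the inverse of ${\mathcal{A}}_T|_{{\mathcal{S}}({\mathrm{AJ}}_s(H))}$, namely ${\mathcal{A}}_{T^{-1}}|_{\mathcal{L}}$, to the equality ${\mathcal{A}}_T({\mathcal{S}}({\mathrm{AJ}}_s(H))) = {\mathcal{L}}$ already obtained. I do not expect any serious obstacle: the heavy lifting has been done by Lemma~\ref{l6} (which forces $\delta = 0$ precisely because $l = 0$, and even produces $T$ uniquely) and by Corollary~\ref{cstrAJ} (which upgrades the change of variable to an isomorphism of solution groups). The only substantive computation left is the one-line identification ${\mathcal{S}}({\mathrm{AJ}}_s(L_1)) = {\mathcal{L}}$ above.
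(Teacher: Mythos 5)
Your proposal is correct and follows essentially the same route as the paper's own proof: reduce to the normal form $\widetilde{H}=L_1$ via Lemma~\ref{l6} (with $l=0$, $\delta=0$) and Corollary~\ref{cstrAJ}, identify ${\mathcal{S}}({\mathrm{AJ}}_s(L_1))={\mathcal{L}}\cong(\boldsymbol{k}^\star,\cdot)$, and obtain (ii) by conjugating back with ${\mathcal{A}}_{T^{-1}}$. The only difference is that you spell out the coefficient comparison $(n-1)c_n=0$ which the paper leaves as an easy check.
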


\begin{proof}
On account of Lemma~\ref{l6} we know that there exists $T\in\Gamma^s_1$ which transforms~$({\mathrm{AJ}}_s(H))$ to~$({\mathrm{AJ}}_s(\widetilde{H}))$ with $\widetilde{H}=L_1$, i.e. to
$$
\Phi(x)=\frac{\dd \Phi}{\dd x}\cdot x.
$$
As one can easily to check, $\Phi(x)=cx$ for every $c\in\boldsymbol{k}^\star$ are the only solutions of this formal differential equations, that is ${\mathcal{S}}({\mathrm{AJ}}_s(\widetilde{H}))={\mathcal{L}}$. Obviously ${\mathcal{L}}\cong \boldsymbol{k}^\star$. The second statement is a consequence of the first one.
\end{proof}

Now we proceed with the second case, that is we study the structure of the group of solutions of ${\mathrm{AJ}}_s(H)$ for a fixed $H(x)=x^{l+1}+\sum_{i=l+2}^sh_ix^i$, where $s\in\mathbb{N}\cup\{\infty\}$, $l\in|1,s-1|$. Also here we can transform~$({\mathrm{AJ}}_s(H))$ to its normal form~$({\mathrm{AJ}}_s(\widetilde{H}))$ with $\widetilde{H}(x)=x^{l+1}+\delta x^{2l+1}$, where $\delta\in\boldsymbol{k}$ and $\delta=0$ provided $s\leq2l$. Let us note, that ${\mathcal{L}}_l\subset{\mathcal{S}}({\mathrm{AJ}}_s(\widetilde{H}))$ is a subgroup and $\pi^s_1\Phi\in{\mathcal{L}}_l$ for every $\Phi\in{\mathcal{S}}({\mathrm{AJ}}_s(\widetilde{H}))$.
Put
$$
{\mathcal{S}}_1({\mathrm{AJ}}_s(\widetilde{H}))={\mathcal{S}}({\mathrm{AJ}}_s(\widetilde{H}))\cap\Gamma^s_1= \{\Phi\in {\mathcal{S}}({\mathrm{AJ}}_s(\widetilde{H})):\pi^s_1\Phi=L_1\,\}.
$$
Since $\mathcal{S}_1({\mathrm{AJ}}_s(\widetilde{H}))=\ker\pi^s_1|_{\mathcal{S}({\mathrm{AJ}}_s(\tilde{H}))}$, so the family $\mathcal{S}_1({\mathrm{AJ}}_s(\widetilde{H}))$ is a normal divisor in $\mathcal{S}({\mathrm{AJ}}_s(\widetilde{H}))$. Furthermore, ${\mathcal{L}}_l\cap{\mathcal{S}}_1({\mathrm{AJ}}_s(\widetilde{H}))=\{L_1\}$ and if $\Phi\in{\mathcal{S}}({\mathrm{AJ}}_s(\widetilde{H}))$ with $\pi^s_1\Phi=L_c\in{\mathcal{L}}_l$ then $\Phi=L_c\circ\left(L_{c^{-1}}\circ \Phi\right)$ and $L_{c^{-1}}\circ \Phi\in{\mathcal{S}}_1({\mathrm{AJ}}_s(\widetilde{H}))$, hence ${\mathcal{S}}({\mathrm{AJ}}_s(\widetilde{H}))= {\mathcal{L}}_l\overline{\circledcirc}{\mathcal{S}}_1({\mathrm{AJ}}_s(\widetilde{H}))$ is a semidirect product (with respect to the substitution) of the subgroup ${\mathcal{L}}_l$ and the normal divisor ${\mathcal{S}}_1({\mathrm{AJ}}_s(\widetilde{H}))$. We have thus

\begin{theorem}\label{tNAJ}
Fix $s\in\mathbb{N}\cup\{\infty\}$, $l\in|1,s-1|$ and $H(x)=x^{l+1}+\sum_{i=l+2}^sh_ix^i\in\ps{k}{x}_s$. Let $T\in\Gamma^s_1$ transforms~$({\mathrm{AJ}}_s(H))$ to~$({\mathrm{AJ}}_s(\widetilde{H}))$, where $\widetilde{H}(x)=x^{l+1}+\delta x^{2l+1}$, $\delta\in\boldsymbol{k}$ and $\delta=0$ provided $s\leq 2l$. Then
\begin{enumerate}[(i)]
\item ${\mathcal{A}}_T({\mathcal{S}}({\mathrm{AJ}}_s(H)))= {\mathcal{S}}({\mathrm{AJ}}_s(\widetilde{H}))={\mathcal{L}}_l\overline{\circledcirc} {\mathcal{S}}_1(\mathrm{AJ}_s(\widetilde{H}))$,
\item
$$
\begin{array}{rcl}
{\mathcal{S}}({\mathrm{AJ}}_s(H))&=&{\mathcal{A}}_{T^{-1}}( {\mathcal{L}}_l\overline{\circledcirc} {\mathcal{S}}_1(\mathrm{AJ}_s(\widetilde{H})))\\[1ex]
&=&\left\{ T\circ \left(L_c\circ\Phi\right)\circ T^{-1}:c\in\boldsymbol{E}_l,\Phi\in {\mathcal{S}}_1({\mathrm{AJ}}_s(\widetilde{H})\,\right\}.
\end{array}
$$
\end{enumerate}
\end{theorem}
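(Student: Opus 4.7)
The plan is to reduce everything to the normal form by applying Fact~\ref{f2} together with Lemma~\ref{l6}, and then to obtain the semidirect product decomposition of ${\mathcal{S}}({\mathrm{AJ}}_s(\widetilde{H}))$ by invoking Lemma~\ref{lxy1} with the truncation map $\pi^s_1$ as the retraction. For the first equality in~(i), I would argue as follows: Lemma~\ref{l6} produces $T\in\Gamma^s_1$ transforming $({\mathrm{AJ}}_s(H))$ into $({\mathrm{AJ}}_s(\widetilde{H}))$ with $\widetilde{H}(x)=x^{l+1}+\delta x^{2l+1}$, and Fact~\ref{f2} says that $\Phi\mapsto T^{-1}\circ\Phi\circ T={\mathcal{A}}_T(\Phi)$ is a bijection between the two solution sets; since ${\mathcal{A}}_T$ is an inner automorphism of~$\Gamma^s$, this restriction is a group isomorphism, which gives the first equality in~(i).

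The substantive part is the semidirect product identity ${\mathcal{S}}({\mathrm{AJ}}_s(\widetilde{H}))={\mathcal{L}}_l\,\overline{\circledcirc}\,{\mathcal{S}}_1({\mathrm{AJ}}_s(\widetilde{H}))$. Here I would apply Lemma~\ref{lxy1} to $G:={\mathcal{S}}({\mathrm{AJ}}_s(\widetilde{H}))$, $H:={\mathcal{L}}_l$ and $\varphi:=\pi^s_1|_G$. Three short verifications are needed. First, ${\mathcal{L}}_l\subset G$: for $c\in\boldsymbol{E}_l$ one has $c^l=1$, so
\[
\widetilde{H}(cx)=c^{l+1}x^{l+1}+\delta c^{2l+1}x^{2l+1}=c\bigl(x^{l+1}+\delta x^{2l+1}\bigr)=\frac{\dd L_c}{\dd x}\cdot\widetilde{H}(x),
\]
which is exactly $({\mathrm{AJ}}_s(\widetilde{H}))$ for $\Phi=L_c$. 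Second, $\varphi(G)\subset{\mathcal{L}}_l$: the $n=l+1$ coefficient equation derived in Section~\ref{sequal} forces $c_1^{l+1}=c_1$ for every solution $\Phi(x)=c_1x+\sum_{j\geq 2}c_jx^j$, and since $c_1\neq 0$ this means $c_1\in\boldsymbol{E}_l$, i.e.\ $\pi^s_1\Phi=L_{c_1}\in{\mathcal{L}}_l$. Third, $\varphi|_{{\mathcal{L}}_l}=\id_{{\mathcal{L}}_l}$, which is immediate. Since $\ker\varphi={\mathcal{S}}_1({\mathrm{AJ}}_s(\widetilde{H}))$ by definition, Lemma~\ref{lxy1} delivers the required semidirect decomposition.

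Item~(ii) is then the image of~(i) under the group isomorphism ${\mathcal{A}}_{T^{-1}}$, the inverse of ${\mathcal{A}}_T$ on $\Gamma^s$: applying it to the semidirect product representation in~(i) shows that every element of ${\mathcal{S}}({\mathrm{AJ}}_s(H))$ has the form $T\circ(L_c\circ\Phi)\circ T^{-1}$ with $c\in\boldsymbol{E}_l$ and $\Phi\in{\mathcal{S}}_1({\mathrm{AJ}}_s(\widetilde{H}))$, and conversely. I do not foresee a real obstacle; the theorem is a bookkeeping assembly of Fact~\ref{f2}, Lemma~\ref{l6}, the leading-coefficient identity $c_1^{l+1}=c_1$, and the abstract Lemma~\ref{lxy1}. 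The most substantive point is the verification that the linear forms $L_c$ with $c\in\boldsymbol{E}_l$ actually lie in ${\mathcal{S}}({\mathrm{AJ}}_s(\widetilde{H}))$, and this is precisely what singles out the normal form $\widetilde{H}(x)=x^{l+1}+\delta x^{2l+1}$: both exponents occurring in $\widetilde{H}$ are congruent to $1\pmod l$, so $\widetilde{H}(cx)=c\widetilde{H}(x)$ whenever $c^l=1$.
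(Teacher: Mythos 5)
Your argument is correct and follows essentially the same route as the paper: reduce to the normal form via Fact~\ref{f2} and Lemma~\ref{l6} (i.e.\ Corollary~\ref{cstrAJ}), note that ${\mathcal{L}}_l\subset{\mathcal{S}}({\mathrm{AJ}}_s(\widetilde{H}))$ and that the $n=l+1$ coefficient relation $c_1^{l+1}=c_1$ forces $\pi^s_1\Phi\in{\mathcal{L}}_l$, and then split off the normal divisor ${\mathcal{S}}_1({\mathrm{AJ}}_s(\widetilde{H}))=\ker\pi^s_1|_{{\mathcal{S}}({\mathrm{AJ}}_s(\widetilde{H}))}$, finally transporting the decomposition by ${\mathcal{A}}_{T^{-1}}$. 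The only cosmetic difference is that you package the splitting through Lemma~\ref{lxy1} with the retraction $\pi^s_1$, whereas the paper verifies the decomposition $\Phi=L_c\circ(L_{c^{-1}}\circ\Phi)$, the trivial intersection, and normality of the kernel directly; the content is identical.
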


We have thus to study the algebraic structure of the group ${\mathcal{S}}_1({\mathrm{AJ}}_s(\widetilde{H}))$. Let us fix $s\in\mathbb{N}\cup\{\infty\}$, $l\in|1,s-1|$ and $\widetilde{H}(x)=x^{l+1}+\delta x^{2l+1}$, where $\delta\in\boldsymbol{k}$ and $\delta=0$ for $l+1\leq s\leq2l$. We begin with a technical result which allow us to prove the structure of the group~${\mathcal{S}}_1({\mathrm{AJ}}_s(\widetilde{H}))$.

\begin{lemma}\label{lx1}
Fix $s\in\mathbb{N}\cup\{\infty\}$ and $l\in|1,s-1|$. If for some $m\in|2,s|$ and $c_m\in\boldsymbol{k}^\star$ we have $\Phi(x)=x+c_mx^m+\sum_{i=m+1}^sc_ix^i\in {\mathcal{S}}_1({\mathrm{AJ}}_s(\widetilde{H}))$, then either $m=l+1$ or $s\in\mathbb{N}$ and $m\geq s-l+1$. In the last case $\Psi(x)=x+\sum_{i=s-l+1}^sc_ix^i\in{\mathcal{S}}_1(\mathrm{AJ}_s(\widetilde{H}))$ for all $c_{s-l+1},\ldots,c_s\in\boldsymbol{k}$.
\end{lemma}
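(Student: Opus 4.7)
My plan is to substitute the hypothesis on $\Phi$ into the normal-form equation $(\mathrm{AJ}_s(\widetilde H))$ and extract a single coefficient equation at a carefully chosen degree. Write $\Phi(x)=x+c_mx^m+\sum_{i>m}c_ix^i$ with $c_1=1$ and $c_j=0$ for $2\le j<m$, and expand both sides of
$$\Phi(x)^{l+1}+\delta\Phi(x)^{2l+1}=\Phi'(x)\cdot\bigl(x^{l+1}+\delta x^{2l+1}\bigr)$$
keeping only the lowest-order correction produced by $c_m$. Because the perturbation $\Phi-x$ has order $m\ge 2$, on the left $\Phi^{l+1}$ contributes $x^{l+1}+(l+1)c_mx^{l+m}+\ldots$ (no cross terms reach degree $l+m$ since $m\ge 2$), while $\delta\Phi^{2l+1}$ begins at degree $2l+1$ and its first $c_m$-contribution is at degree $2l+m$; on the right, $\Phi'=1+mc_mx^{m-1}+\ldots$ so $\Phi'\cdot x^{l+1}$ contributes $x^{l+1}+mc_mx^{l+m}+\ldots$ and $\Phi'\cdot\delta x^{2l+1}$ has its first $c_m$-contribution at degree $2l+m$.

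Next, I read off the coefficient of $x^{l+m}$ on both sides. The two cases $m=l+1$ and $m\ne l+1$ behave differently: if $m=l+1$ then $l+m=2l+1$, the term $\delta x^{2l+1}$ shows up symmetrically on both sides, and the coefficients match identically. If $m\ne l+1$, neither $\delta$-branch reaches degree $l+m$, so the equation gives
$$(l+1-m)\,c_m=0.$$
Since $\mathrm{char}\,\boldsymbol k=0$ and $c_m\ne 0$, this forces $m=l+1$ \emph{provided} the coefficient of $x^{l+m}$ is actually tracked by the equation. In the power-series case ($s=\infty$) it always is, so $m=l+1$ is forced. In the truncated case ($s\in\mathbb N$) the identity is modulo $x^{s+1}$, so the constraint is void exactly when $l+m>s$, i.e.\ $m\ge s-l+1$. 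This yields the stated dichotomy.

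For the second half I verify directly that any $\Psi(x)=x+\sum_{i=s-l+1}^{s}c_ix^i$ solves $(\mathrm{AJ}_s(\widetilde H))$. The point is an order estimate: $\Psi-x$ has order $\ge s-l+1$ and $\Psi'-1$ has order $\ge s-l$, hence
$$\Psi^{l+1}\equiv x^{l+1},\qquad \delta\Psi^{2l+1}\equiv\delta x^{2l+1},\qquad \Psi'\cdot\widetilde H\equiv \widetilde H\pmod{x^{s+1}},$$
because every correction carries an extra factor of order $\ge s-l$ multiplied against a factor of order $\ge l+1$, giving a total order $\ge s+1$. Thus both sides of $(\mathrm{AJ}_s(\widetilde H))$ reduce to $\widetilde H(x)$ modulo $x^{s+1}$, so $\Psi\in\mathcal S_1(\mathrm{AJ}_s(\widetilde H))$ for arbitrary $c_{s-l+1},\ldots,c_s\in\boldsymbol k$.

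The only delicate point is the bookkeeping in the first step: one must check that no other monomial of $\Phi^{l+1}$, $\Phi^{2l+1}$, $\Phi'\cdot x^{l+1}$ or $\Phi'\cdot\delta x^{2l+1}$ contributes to degree $x^{l+m}$ when $m\ne l+1$. This is immediate once one notes that $c_j=0$ for $2\le j<m$ and $m\ge 2$, so any cross term has order strictly larger than $l+m$; all remaining work is routine.
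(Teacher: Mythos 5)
Your proof is correct and follows essentially the same route as the paper: expand both sides of $({\mathrm{AJ}}_s(\widetilde{H}))$, compare the coefficient of $x^{m+l}$ to get $(l+1)c_m=mc_m$, and note that for finite $s$ this comparison is void exactly when $m+l\geq s+1$, i.e.\ $m\geq s-l+1$. The only difference is that you spell out the order-counting verification that every $\Psi(x)=x+\sum_{i=s-l+1}^{s}c_ix^i$ solves the equation, a step the paper merely asserts.
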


\begin{proof}
Assume that $\Phi(x)=x+c_mx^m+\sum_{i=m+1}^sc_ix^i\in{\mathcal{S}}_1({\mathrm{AJ}}_s(\widetilde{H}))$ with $c_m\in\boldsymbol{k}^\star$. Rearranging $(\Phi)^{l+1}$ and $(\Phi)^{2l+1}$ with respect to subsequent powers of~$x$ we get
$$
\begin{array}{rcl}
\displaystyle \left(\Phi(x))\right)^{l+1}&=& x^{l+1}+(l+1)c_mx^{m+l}+\ldots \\[.5ex]
\displaystyle \delta\left(\Phi(x))\right)^{2l+1}&=&\displaystyle \delta\left(x^{2l+1}+(2l+1)c_mx^{m+2l}+\ldots\right)
\end{array}
$$
and
$$
\begin{array}{lll}
\lefteqn{\frac{\dd \Phi}{\dd x}(x^{l+1}+\delta x^{2l+1})=x^{l+1}+mc_mx^{m+l}+\sum_{i=m+1}^sic_ix^{i+l}}\null\\
& \qquad\qquad\qquad\qquad & \null\displaystyle +\delta\left(x^{2l+1}+mc_mx^{m+2l}+\sum_{i=m+1}^sic_ix^{i+2l}\right).
\end{array}
$$
Let us compare both sides of~$({\mathrm{AJ}}_s(\widetilde{H}))$. Assume first that $s=\infty$. Comparing coefficients for $x^{m+l}$ we get $(l+1)c_m=mc_m$, what jointly with $c_m\neq0$ implies $m=l+1$. Finally, if $s$ is finite then we have two possibilities: either $m+l\leq s$ and then also $m=l+1$, or $m+l\geq s+1$, so $m\geq s-l+1$. In the lase case we have thus $\Psi(x)=x+\sum_{i=s-l+1}^sc_ix^i\in{\mathcal{S}}_1(\mathrm{AJ}_s(\widetilde{H}))$ for all $c_{s-l+1},\ldots,c_s\in\boldsymbol{k}$.
\end{proof}

Using Lemmas~~\ref{lx2} and~\ref{lx1} we are able to prove structural properties of ${\mathcal{S}}_1({\mathrm{AJ}}_s(\widetilde{H}))$.

\begin{lemma}\label{lxx1}
If $s,l\in\mathbb{N}$ with $l+1\leq s$, then we have
\begin{enumerate}[(i)]
\item $\pi^{\infty}_s{\mathcal{S}}_1(\mathrm{AJ}_{\infty}(\widetilde{H}))\subset {\mathcal{S}}_1(\mathrm{AJ}_s(\widetilde{H}))$,
\item ${\mathcal{S}}_1^l({\mathrm{AJ}}_s(\widetilde{H}))= \left\{x+\sum\limits_{i=s-l+1}^sc_ix^i:c_{s-l+1},\ldots,c_s \in\boldsymbol{k}\,\right\}\subset {\mathcal{S}}_1({\mathrm{AJ}}_s(\widetilde{H}))$,
\item $\pi^{\infty}_s{\mathcal{S}}_1(\mathrm{AJ}_{\infty}(\widetilde{H}))\cap {\mathcal{S}}_1^l({\mathrm{AJ}}_s(\widetilde{H}))=\{L_1\}$ for $s\geq2l+1$,
 \item $\pi^{\infty}_s{\mathcal{S}}_1(\mathrm{AJ}_{\infty}(\widetilde{H})) \subset{\mathcal{S}}_1^l({\mathrm{AJ}}_s(\widetilde{H}))$ and ${\mathcal{S}}_1(\mathrm{AJ}_s(\widetilde{H}))={\mathcal{S}}_1^l ({\mathrm{AJ}}_s(\widetilde{H}))$ for $s\leq2l$,
\item ${\mathcal{S}}_1^l({\mathrm{AJ}}_s(\widetilde{H}))$ is a commutative normal divisor in~${\mathcal{S}}_1({\mathrm{AJ}}_s(\widetilde{H}))$ which is  isomorphic to the additive group $\boldsymbol{k}^l$,
\item if $s\geq2l+1$, $\Phi(x)=x+c_{l+1}x^{l+1}+\sum\limits_{i=l+2}^sc_ix^i\in \pi^{\infty}_s{\mathcal{S}}_1(\mathrm{AJ}_{\infty}(\widetilde{H}))$ and $\Psi(x)=x+\sum\limits_{i=s-l+1}^sd_ix^i\in{\mathcal{S}}_1^l({\mathrm{AJ}}_s(\widetilde{H}))$ then
$$
(\Phi\circ\Psi)(x)=(\Psi\circ \Phi)(x)=x+\sum_{i=l+1}^{s-l}c_ix^i+\sum_{i=s-l+1}^s(c_i+d_i)x^i.
$$
\end{enumerate}
\end{lemma}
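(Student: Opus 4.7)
The plan is to take the six assertions in order, each drawing on Lemmas~\ref{lx1} and~\ref{lx2} or on a short direct computation.

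Parts (i) and (ii) are essentially free. Part (i) is the specialisation of Lemma~\ref{lx2} to $H=\widetilde{H}$, noting that $\pi^{\infty}_s\widetilde{H}=\widetilde{H}$ under the canonical identification and that the congruence $\tilde{\Phi}\equiv x\mod x^2$ carries through to the truncation, so the projection lies in $\Gamma_1^s$ as well. Part (ii) is the content of the last sentence of Lemma~\ref{lx1}, which exhibits every tail of the prescribed shape as a solution. For part (iii), I would take $\Phi$ in the intersection and lift it to some $\tilde{\Phi}\in\mathcal{S}_1(\mathrm{AJ}_{\infty}(\widetilde{H}))$; the fact that $\pi^{\infty}_s\tilde{\Phi}=\Phi\in\mathcal{S}_1^l$ forces the coefficients of $\tilde{\Phi}-x$ at indices $2,\ldots,s-l$ to vanish, while Lemma~\ref{lx1} with $s=\infty$ pins the leading nonzero index of $\tilde{\Phi}-x$ at exactly $l+1$. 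Since $s\geq 2l+1$ gives $l+1\leq s-l$, this is impossible unless $\tilde{\Phi}=L_1$, whence $\Phi=L_1$.

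Part (iv) is the same style of case analysis: for the first inclusion, Lemma~\ref{lx1} in the infinite case places $\tilde{\Phi}$ in the form $x+\sum_{i\geq l+1}d_ix^i$, and the interval $|s-l+1,s|$ covers $|l+1,s|$ precisely when $s\leq 2l$; for the equality $\mathcal{S}_1=\mathcal{S}_1^l$ under $s\leq 2l$, the same lemma forces the leading index of any nontrivial $\Phi-x$ to be $\geq s-l+1$, because the alternative $m=l+1\geq s-l+1$ already holds under this hypothesis. Parts (v) and (vi) carry most of the content, and I would handle them together by a single composition computation. For $\Phi(x)=x+\sum_{i\geq l+1}c_ix^i$ in $\pi^{\infty}_s\mathcal{S}_1(\mathrm{AJ}_{\infty}(\widetilde{H}))$ and $\Psi(x)=x+\sum_{j=s-l+1}^{s}d_jx^j$ in $\mathcal{S}_1^l$, expand
$$
(\Phi\circ\Psi)(x)=\Psi(x)+\sum_{i=l+1}^{s}c_i(\Psi(x))^i\mod x^{s+1}.
$$
Using $\Psi-x=O(x^{s-l+1})$, each binomial correction to $(\Psi(x))^i$ has order at least $i+s-l\geq s+1$ whenever $i\geq l+1$, so $(\Psi(x))^i\equiv x^i\mod x^{s+1}$ throughout the sum. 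A symmetric expansion of $\Psi\circ\Phi$ uses $\Phi-x=O(x^{l+1})$ together with the bound $j+l\geq s+1$ for $j\geq s-l+1$ to give $(\Phi(x))^j\equiv x^j\mod x^{s+1}$. Collecting coefficients produces the formula of~(vi). Specialising to $\Phi\in\mathcal{S}_1^l$ then yields commutativity of $\mathcal{S}_1^l$ and identifies $(c_{s-l+1},\ldots,c_s)\mapsto x+\sum c_ix^i$ as a group isomorphism onto $(\boldsymbol{k}^l,+)$. Normality of $\mathcal{S}_1^l$ in $\mathcal{S}_1$ follows because, by~(ii), $\mathcal{S}_1^l$ coincides with $\{\Phi\in\mathcal{S}_1:\Phi\equiv x\mod x^{s-l+1}\}$, and this congruence condition is preserved under conjugation inside $\Gamma_1^s$.

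The main obstacle I anticipate is the intermediate regime $l\geq 2$ and $l+1\leq s\leq 2l$, where $\mathcal{S}_1=\mathcal{S}_1^l$ by~(iv) but the clean degree count above can fail for indices $i\in|s-l+1,l|$; here the binomial corrections to $(\Psi(x))^i$ need not vanish automatically, and one must use the AJ equation itself (through the recursion~\eqref{16}) to conclude that the relevant mixing coefficients cancel and that the resulting abelian group is still isomorphic to $(\boldsymbol{k}^l,+)$, possibly after a non-identity parametrization. The bookkeeping in this subcase, and the verification that normality extends to it, is the point at which the computations are least routine.
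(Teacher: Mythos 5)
Your handling of parts (i)--(iv) and (vi) is correct and is essentially the paper's own argument: (i) is Lemma~\ref{lx2} applied to $\widetilde{H}$, (ii) is the final sentence of Lemma~\ref{lx1}, (iii) and (iv) come from the dichotomy ``$m=l+1$ or $m\geq s-l+1$'' of Lemma~\ref{lx1} together with the observation that $l+1\leq s-l$ exactly when $s\geq 2l+1$, and your order count for (vi) (corrections of order at least $i+s-l$, respectively $j+l$, hence $\geq s+1$) is the computation the paper carries out. For (v) with $s\geq 2l$ your argument also matches: tails compose by coefficientwise addition, giving a commutative group isomorphic to $(\boldsymbol{k}^l,+)$; your normality argument (that $\mathcal{S}_1^l(\mathrm{AJ}_s(\widetilde{H}))$ is the intersection of $\mathcal{S}_1(\mathrm{AJ}_s(\widetilde{H}))$ with the kernel of the truncation $\pi^s_{s-l}$, which is normal in all of $\Gamma^s_1$) is a harmless variant of the paper's, which uses $\ker\pi^s_{l+1}|_{\mathcal{S}_1}$ for $s\geq 2l+1$ and the equality $\mathcal{S}_1=\mathcal{S}_1^l$ for $s\leq 2l$.

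The subcase you flag, $l+1\leq s\leq 2l-1$, is indeed the gap in your proposal, but you should know that the paper does not close it either: its proof simply asserts $(\Psi_1\circ\Psi_2)(x)=x+\sum_{i=s-l+1}^s(c_i+d_i)x^i$ for all admissible $s$, which is precisely the degree count you distrust; it requires $2(s-l+1)-1\geq s+1$, i.e. $s\geq 2l$. For $s=2l-1$ a single symmetric correction $l\,c_ld_l x^s$ appears, the group is still abelian, and replacing $c_s$ by $c_s-\frac{l}{2}c_l^2$ restores the isomorphism with $(\boldsymbol{k}^l,+)$, so (v) survives there with a small repair. For $l\geq3$ and $l+1\leq s\leq 2l-2$, however, the rescue you propose --- invoking the recursion~\eqref{16} to force the mixing coefficients to cancel --- cannot work: by part (ii) every tail $x+\sum_{i=s-l+1}^sc_ix^i$ already solves $(\mathrm{AJ}_s(\widetilde{H}))$, so the equation imposes no further relations on these coefficients, and the group is genuinely non-commutative. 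Concretely, for $l=3$, $s=4$, $\widetilde{H}(x)=x^4$ one has $\mathcal{S}_1^l(\mathrm{AJ}_4(\widetilde{H}))=\Gamma^4_1$, and $f(x)=x+x^2$, $g(x)=x+2x^2$ give $(f\circ g)(x)=x+3x^2+4x^3+4x^4$ but $(g\circ f)(x)=x+3x^2+4x^3+2x^4$ modulo $x^5$. So in that range part (v) is false as stated (note that the extra hypothesis $(H(l,s,\delta))$ appended after the end of the source does not exclude this example, since $\delta=0$ there); no proof, yours or the paper's, can establish commutativity without restricting to $s\geq 2l-1$ or weakening the claim.
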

\begin{proof} The inclusion {\em (i)} is an obvious consequence of Lemma~\ref{lx2} whereas {\em (ii)} follows from Lemma~\ref{lx1}. The properties {\em (iii)} and {\em (iv)} are consequences of Lemma~\ref{lx1} and an observation that $s-l+1\geq l+2$ if and only if $s\geq2l+1$. Indeed, if for some $c_m\in\boldsymbol{k}^\star$ we have  $\Phi(x)=x+c_mx^m+\sum_{i=m+1}^{\infty}\in{\mathcal{S}}_1({\mathrm{AJ}}_{\infty}(\widetilde{H}))$, then $m=l+1$ by Lemma~\ref{lx1}. Hence $\pi^{\infty}_s\Phi(x)=x+c_{l+1}x^{l+1}+\sum_{i=l+2}^s\notin {\mathcal{S}}_1^l({\mathrm{AJ}}_s(\widetilde{H}))$ for $s\geq2l+1$ and $\pi^{\infty}_s\Phi(x)=x+c_{l+1}x^{l+1}+\sum_{i=l+2}^s\in {\mathcal{S}}_1^l({\mathrm{AJ}}_s(\widetilde{H}))$ provided $s\leq2l$. Further,
$$
(\Psi_1\circ \Psi_2)(x)=x+\sum_{i=s-l+1}^s(c_i+d_i)x^i \quad \mbox{ and }\quad \Psi_1^{\circ-1}(x)=x+\sum_{i=s-l+1}^s(-c_i)x^i,
$$
for $\Psi_1(x)=x+\sum_{i=s-l+2}^sc_ix^i,\Psi_2(x)=x+\sum_{i=s-l+2}^sd_ix^i \in{\mathcal{S}}_1^l(\mathrm{AJ}_s(\widetilde{H}))$,  hence ${\mathcal{S}}_1^l(\mathrm{AJ}_s(\widetilde{H}))$ is isomorphic to the additive group $\boldsymbol{k}^l$. In the case $s\geq2l+1$, for  $\pi^s_{l+1}: {\mathcal{S}}_1(\mathrm{AJ}_s(\widetilde{H}))\to\Gamma^{l,l}_1$ we get ${\mathcal{S}}_1^l({\mathrm{AJ}}_s(\widetilde{H}))= \ker\pi^s_{l+1}|_{{\mathcal{S}}_1(\mathrm{AJ}_s(\tilde{H}))}$ by Lemma~\ref{lx1} whereas ${\mathcal{S}}_1(\mathrm{AJ}_s(\widetilde{H}))= {\mathcal{S}}_1^l({\mathrm{AJ}}_s(\widetilde{H}))$ for $s\leq2l$ (also from Lemma~\ref{lx1}), so in both cases ${\mathcal{S}}_1^l({\mathrm{AJ}}_s(\widetilde{H}))$ is a~normal divisor in ${\mathcal{S}}_1(\mathrm{AJ}_s(\widetilde{H}))$.

Finally, for $s\geq2l+1$ and $\Phi(x)=x+c_{l+1}x^{l+1}+\sum_{i=l+2}^sc_ix^i\in \pi^{\infty}_s{\mathcal{S}}_1({\mathrm{AJ}}_{\infty}(\widetilde{H}))$, $\Psi(x)=x+\sum_{i=s-l+1}^sd_ix^i\in{\mathcal{S}}_1^l ({\mathrm{AJ}}_s(\widetilde{H}))$ we have
$$
(\Phi\circ \Psi)(x)= x+\sum_{i=s-l+1}^sd_ix^i+\sum_{j=l+1}^sc_j\left( x+\sum_{i=s-l+1}^sd_ix^i\right)^j\mod x^{s+1},
$$
so $(\Phi\circ \Psi)(x)=x+\sum\limits_{i=l+1}^{s-l}c_ix^i+\sum_{i=s-l+1}^s(c_i+d_i)x^i$. On the other hand,
$$
(\Psi\circ \Phi)(x)= x+\sum_{i=l+1}^sc_ix^i+\sum_{j=s-l+1}^sd_j\left( x+\sum_{i=l+1}^sc_ix^i\right)^j\mod x^{s+1},
$$
hence $(\Psi\circ \Phi)(x)=x+\sum\limits_{i=l+1}^{s-l}c_ix^i+\sum_{i=s-l+1}^s(c_i+d_i)x^i$. This finishes the proof.
\end{proof}

We finish the study of this case with the following corollary.

\begin{corollary}\label{t3}
Fix $s\in\mathbb{N}$ and $l\in|1,s-1|$. Let $\widetilde{H}(x)=x^{l+1}+\delta x^{2l+1}$, where $\delta\in\boldsymbol{k}$ and $\delta=0$ whenever  $l+1\leq s\leq2l$.
\begin{enumerate}[(i)]
\item If $\pi^{\infty}_{l+1}: {\mathcal{S}}_1({\mathrm{AJ}}_{\infty}(\widetilde{H}))\to \Gamma^{l,l}_1$ is an isomorphism then
    $$
    {\mathcal{S}}_1({\mathrm{AJ}}_s(\widetilde{H}))\cong\left(\pi^{\infty}_s {\mathcal{S}}_1({\mathrm{AJ}}_{\infty}(\widetilde{H}))\right)\circledcirc {\mathcal{S}}_1^l({\mathrm{AJ}}_s(\widetilde{H}))\cong \boldsymbol{k}\times\boldsymbol{k}^l=\boldsymbol{k}^{l+1}
    $$
    provided $s\geq 2l+1$,
\item ${\mathcal{S}}_1({\mathrm{AJ}}_s(\widetilde{H}))= {\mathcal{S}}_1^l({\mathrm{AJ}}_s(\widetilde{H}))\cong \boldsymbol{k}^l$ for $s\leq2l$.
\end{enumerate}
\end{corollary}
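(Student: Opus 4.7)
The strategy is to view Corollary~\ref{t3} as the packaging of the structural facts collected in Lemma~\ref{lxx1}, organised via the algebraic criterion in Lemma~\ref{lxy1}, so no new series computations are required.

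For part (ii), where $l+1\leq s\leq 2l$, by Lemma~\ref{lxx1}~(iv) the equality ${\mathcal{S}}_1({\mathrm{AJ}}_s(\widetilde{H}))={\mathcal{S}}_1^l({\mathrm{AJ}}_s(\widetilde{H}))$ already holds, and by Lemma~\ref{lxx1}~(v) the latter group is isomorphic to $(\boldsymbol{k}^l,+)$. Thus (ii) is immediate.

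For part (i), where $s\geq 2l+1$, I would first observe that Lemma~\ref{lx1} implies two things about the projection $\pi^s_{l+1}$ restricted to $\mathcal{S}_1({\mathrm{AJ}}_s(\widetilde{H}))$: the image lies in $\Gamma^{l,l}_1$ (because for any $\Phi\in\mathcal{S}_1$ the coefficients $c_2,\dots,c_l$ vanish), and the kernel equals $\mathcal{S}_1^l({\mathrm{AJ}}_s(\widetilde{H}))$ (because a $\Phi\in\mathcal{S}_1$ with $\pi^s_{l+1}\Phi=L_1$ has smallest nonzero index $m\geq l+2>l+1$, forcing $m\geq s-l+1$). Hence $\pi^s_{l+1}:\mathcal{S}_1({\mathrm{AJ}}_s(\widetilde{H}))\to\Gamma^{l,l}_1$ is a group homomorphism with kernel $\mathcal{S}_1^l({\mathrm{AJ}}_s(\widetilde{H}))$. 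Next, using the standing hypothesis that $\pi^{\infty}_{l+1}:\mathcal{S}_1({\mathrm{AJ}}_{\infty}(\widetilde{H}))\to\Gamma^{l,l}_1$ is an isomorphism and the identity $\pi^{\infty}_{l+1}=\pi^s_{l+1}\circ\pi^{\infty}_s$, I would conclude that $\pi^{\infty}_s$ is injective on $\mathcal{S}_1({\mathrm{AJ}}_{\infty}(\widetilde{H}))$, and that $\pi^s_{l+1}$ restricted to the subgroup $H:=\pi^{\infty}_s\mathcal{S}_1({\mathrm{AJ}}_{\infty}(\widetilde{H}))\subseteq\mathcal{S}_1({\mathrm{AJ}}_s(\widetilde{H}))$ (Lemma~\ref{lxx1}~(i)) is a group isomorphism onto $\Gamma^{l,l}_1$. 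Composing with its inverse produces a homomorphism $\varphi:\mathcal{S}_1({\mathrm{AJ}}_s(\widetilde{H}))\to H$ which is the identity on $H$ and whose kernel is $\mathcal{S}_1^l({\mathrm{AJ}}_s(\widetilde{H}))$.

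At this point Lemma~\ref{lxy1} applies and yields $\mathcal{S}_1({\mathrm{AJ}}_s(\widetilde{H}))=H\,\overline{\odot}\,\mathcal{S}_1^l({\mathrm{AJ}}_s(\widetilde{H}))$. The promotion from semidirect to direct product is handled by Lemma~\ref{lxx1}~(vi), which asserts that elements of $H$ and of $\mathcal{S}_1^l({\mathrm{AJ}}_s(\widetilde{H}))$ commute, so by the second half of Lemma~\ref{lxy1} (or Fact~\ref{fxy1}~(ii)) the product is direct. Finally, $H\cong\mathcal{S}_1({\mathrm{AJ}}_{\infty}(\widetilde{H}))\cong\Gamma^{l,l}_1\cong(\boldsymbol{k},+)$ via the hypothesised isomorphism together with Lemma~\ref{lx3}~(ii), and $\mathcal{S}_1^l({\mathrm{AJ}}_s(\widetilde{H}))\cong(\boldsymbol{k}^l,+)$ by Lemma~\ref{lxx1}~(v), giving $\mathcal{S}_1({\mathrm{AJ}}_s(\widetilde{H}))\cong\boldsymbol{k}\times\boldsymbol{k}^l=\boldsymbol{k}^{l+1}$.

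The only delicate point I foresee is the bookkeeping around the surjectivity and injectivity of $\pi^s_{l+1}|_H$: one must use the hypothesis on $\pi^{\infty}_{l+1}$ precisely to manufacture the retraction $\varphi$ without inadvertently invoking it on $\mathcal{S}_1({\mathrm{AJ}}_s(\widetilde{H}))$ directly. Everything else is a direct application of the preceding lemmas.
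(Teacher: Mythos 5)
Your proposal is correct and follows essentially the same route as the paper: part (ii) from Lemma~\ref{lxx1}~\emph{(iv)}--\emph{(v)}, and part (i) by using the hypothesis on $\pi^{\infty}_{l+1}$ to make $\pi^s_{l+1}$ restricted to $\pi^{\infty}_s{\mathcal{S}}_1({\mathrm{AJ}}_{\infty}(\widetilde{H}))$ an isomorphism onto $\Gamma^{l,l}_1$, composing with its inverse to get a retraction with kernel ${\mathcal{S}}_1^l({\mathrm{AJ}}_s(\widetilde{H}))$, and then applying Lemma~\ref{lxy1} together with the commutation in Lemma~\ref{lxx1}~\emph{(vi)}. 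Your write-up is in fact slightly more explicit than the paper's on two points it leaves implicit, namely that the image of $\pi^s_{l+1}|_{{\mathcal{S}}_1({\mathrm{AJ}}_s(\widetilde{H}))}$ lies in $\Gamma^{l,l}_1$ and that the commutation is what upgrades the semidirect product to a direct one.
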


\begin{proof}
The condition {\em (ii)} follows from Lemma~\ref{lxx1}~{\em (iv)}. Let $2l+1\leq s<\infty$. If $\pi^{\infty}_{l+1}: {\mathcal{S}}_1({\mathrm{AJ}}_{\infty}(\widetilde{H}))\to \Gamma^{l,l}_1$ is an isomorphism then $\pi^s_{l+1}:\pi^{\infty}_s({\mathcal{S}}_1({\mathrm{AJ}}_{\infty} (\widetilde{H}))) \to\Gamma^{l,l}_1$ is also an isomorphism. Clearly $\pi^{\infty}_s({\mathcal{S}}_1({\mathrm{AJ}}_{\infty}(\widetilde{H}))) \subset {\mathcal{S}}_1({\mathrm{AJ}}_{s}(\widetilde{H}))$ by Lemma~\ref{lxx1}~{\em(i)}, hence $\pi^s_{l+1}: {\mathcal{S}}_1({\mathrm{AJ}}_s(\widetilde{H}))\to\Gamma^{l,l}_1$ is an epimorphism. Define (an isomorphism)
$$
\Psi:=\left( \pi^s_{l+1}|_{\pi^{\infty}_s({\mathcal{S}}_1 ({\mathrm{AJ}}_{\infty}(\tilde{H})))} \right)^{-1}:\Gamma^{l,l}_1\to \pi^{\infty}_s({\mathcal{S}}_1({\mathrm{AJ}}_{\infty}(\tilde{H}))).
$$
Then $\Psi\circ\pi^s_{l+1}|_{{\mathcal{S}}_1({\mathrm{AJ}}_s(\widetilde{H}))}: {\mathcal{S}}_1({\mathrm{AJ}}_s(\widetilde{H}))\to \pi^{\infty}_s{\mathcal{S}}_1({\mathrm{AJ}}_{\infty}(\widetilde{H}))$ is an epimorphism with $\Psi\circ\pi^s_{l+1}|_{\pi^{\infty}_s{\mathcal{S}}_{\infty} ({\mathrm{AJ}}_s(\widetilde{H}))} =\id_{{\mathcal{S}}_1({\mathrm{AJ}}_{\infty}(\widetilde{H}))}$. Moreover, $\ker\pi^s_{l+1}|_{{\mathcal{S}}_1({\mathrm{AJ}}_s(\widetilde{H}))} ={\mathcal{S}}^l_1(\mathrm{AJ}_s(\widetilde{H}))$ and $\Psi$ is an isomorphism, whence  $\ker\Psi\circ\pi^s_{l+1}|_{{\mathcal{S}}_1({\mathrm{AJ}}_s(\widetilde{H}))} ={\mathcal{S}}^l_1(\mathrm{AJ}_s(\widetilde{H}))$. This jointly with Lemma~\ref{lxy1} proves {\em (i)}.
\end{proof}

\section{The Acz{\'e}l-Jabotynski formal differential equation -- the case $l=0$}

In Theorem~\ref{tNAJ1} we show the general solution of the Acz{\'e}l-Jabotynski formal differential equation in the case $l=0$. Unfortunately, the form of the solution give no dependence of solutions on the generator $H(x)=x+\sum_{j=1}^sh_jx^j$. We give now another form of solutions with indicated direct dependence of them on $h_j$'s.

\begin{theorem}[{cf.~\cite[Theorem 1]{Reich1}}]\label{t1}
Fix $H(x)=x+\sum_{n=2}^sh_nx^n\in\Gamma_1^s$. There exists a
sequence $(K_n)_{n\in|2,s|}$,
$K_n(y,t_2\ldots,t_n)\in\boldsymbol{q}[y;t_2,\ldots,t_n]$ of universal polynomials given recurrently by
$$%\begin{equation}\label{12}
\begin{array}{lll}
\lefteqn{K_n(y,t_2,\ldots,t_n)=\frac{1}{n-1}
\left(t_n\sum_{j=0}^{n-2}y^j+\right.}\null\\
& & \null\displaystyle\sum_{i=2}^{n-1}t_i\sum_{\overline{u}_n\in
U_{n,i}}B_{\overline{u}_n}y^{u_1}(y^2-y)^{i-u_1-1}
\prod_{j=2}^{n-i+1}K_j(y,t_1,\ldots,t_j)^{u_j}\\
& & \null\displaystyle\;\;\left.-\sum_{i=2}^{n-1}t_i
(n-i+1)K_{n-i+1}(y,t_2\ldots,t_{n-i+1})\right)\;\;\;\mbox{ for }\,n\in|2,s|.
\end{array}
$$%\end{equation}
such that to each $c_1\in\boldsymbol{k}^\star$
\begin{equation}\label{13}
\Phi_{c_1}(x)=c_1x+\sum_{n=2}^s(c_1^2-c_1)
K_n(c_1;h_2,\ldots,h_n)x^n
\end{equation}
is a unique solution of~$({\mathrm{AJ}}_s(H))$. The family ${\mathcal{S}}({\mathrm{AJ}}_s(H))$ of all
solutions~$\Phi_{c_1}$ of the equation~$({\mathrm{AJ}}_s(H))$ is a~commutative group under substitution isomorphic to $(\boldsymbol{k}^\star,\cdot)$.
\end{theorem}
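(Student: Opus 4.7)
The plan is to establish the formula by induction on $n \in |2, s|$, showing that every solution $\Phi \in \mathcal{S}(\mathrm{AJ}_s(H))$ has $c_n = (c_1^2 - c_1) K_n(c_1; h_2, \ldots, h_n)$ with $K_n$ given by the stated recurrence. The starting point is the reduced system \eqref{11} from Section~\ref{sequal}, which already shows that $c_1 \in \boldsymbol{k}^\star$ is the only free parameter and that each $c_n$ with $n \geq 2$ is uniquely determined by $c_1, \ldots, c_{n-1}$ and $h_2, \ldots, h_n$ (since the coefficient $n-1 \neq 0$ in characteristic zero). For $n=2$ the equation gives $c_2 = h_2(c_1^2 - c_1)$, so $K_2(y, t_2) = t_2$ in agreement with the recurrence (evaluated with empty inner sums).

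For the inductive step, substitute $c_j = (c_1^2 - c_1) K_j(c_1; h_2, \ldots, h_j)$ for $j \in |2, n-1|$ into the $n$-th line of \eqref{11}. Three algebraic identities each contribute a factor of $(c_1^2 - c_1)$. First, $c_1^n - c_1 = (c_1^2 - c_1)\sum_{j=0}^{n-2} c_1^j$ handles the $h_n$-term. Second, for $\overline{u}_n \in U_{n,i}$ with $i \in |2, n-1|$, the defining identity $\sum_{j=2}^{n-i+1} u_j = i - u_1$ yields
$$
\prod_{j=2}^{n-i+1} c_j^{u_j} = (c_1^2 - c_1)^{i - u_1} \prod_{j=2}^{n-i+1} K_j(c_1; h_2, \ldots, h_j)^{u_j};
$$
moreover in the range $i \leq n-1$ one has $u_1 \leq i - 1$ (since $u_1 = i$ would force $n = i$), so $(c_1^2 - c_1)^{i - u_1} = (c_1^2 - c_1) \cdot (c_1^2 - c_1)^{i - u_1 - 1}$ extracts a single factor cleanly. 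Third, $(n-i+1) c_{n-i+1} = (c_1^2 - c_1)(n-i+1) K_{n-i+1}$ directly. After factoring out $(c_1^2 - c_1)$ and dividing by $n-1$, what remains is precisely the stated recurrence for $K_n$. Since the recurrence \eqref{11} determines $c_n$ uniquely from $c_1, \ldots, c_{n-1}$, we conclude $\mathcal{S}(\mathrm{AJ}_s(H)) = \{\Phi_{c_1} : c_1 \in \boldsymbol{k}^\star\}$ and the map $\lambda: c_1 \mapsto \Phi_{c_1}$ is a bijection from $\boldsymbol{k}^\star$ onto the solution set.

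For the group structure, by Fact~\ref{f1} the composition $\Phi_{c_1} \circ \Phi_{d_1}$ is again a solution of $(\mathrm{AJ}_s(H))$, and its first-order coefficient equals $c_1 d_1$. Uniqueness of the solution with prescribed leading coefficient then forces $\Phi_{c_1} \circ \Phi_{d_1} = \Phi_{c_1 d_1}$, so $\lambda$ is a group isomorphism from $(\boldsymbol{k}^\star, \cdot)$ onto $\mathcal{S}(\mathrm{AJ}_s(H))$. Commutativity of $\mathcal{S}(\mathrm{AJ}_s(H))$ is an immediate corollary.

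The main obstacle is the combinatorial bookkeeping verifying that $(c_1^2 - c_1)$ divides the inner sum uniformly across all multi-indices $\overline{u}_n \in U_{n,i}$. This hinges both on the identity $\sum_{j\geq 2} u_j = i - u_1$ (which is part of the definition of $U_{n,i}$) and on the strict inequality $u_1 \leq i - 1$ in the range $i \in |2, n-1|$, which guarantees that the exponent $i - u_1 - 1$ appearing in the recurrence for $K_n$ is a non-negative integer. Once this is in place, the rest amounts to rearranging the reduced system \eqref{11} and invoking uniqueness of the recursive construction.
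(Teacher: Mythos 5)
Your proposal is correct and follows essentially the same route as the paper: induction on the reduced system~\eqref{11}, extracting the factor $(c_1^2-c_1)$ via $\sum_{j\geq2}u_j=i-u_1$ together with $i-u_1\geq1$ (which is exactly Lemma~\ref{l3}~\emph{(v)}), and then identifying ${\mathcal{S}}({\mathrm{AJ}}_s(H))$ with $(\boldsymbol{k}^\star,\cdot)$ through the uniqueness of the solution with prescribed linear coefficient and Fact~\ref{f1}. The only cosmetic difference is that the paper computes $K_2,K_3$ by hand before the induction, while you subsume them into the general recurrence.
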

\begin{proof}
We know that $c_1\in\boldsymbol{k}^\star$ is arbitrary and, from~\eqref{11} we get $c_2=h_2(c_1^2-c_1)$ and $c_3=(c_1^2-c_1)\left(\left(
\frac{h_3}{2}+h_2^2\right)(c_1+1)-2h_2^2\right)$. Define polynomials $K_2\in\boldsymbol{q}[y;t_2]$, $K_3\in\boldsymbol{q}[y;t_2,t_3]$ by
$K_2(y,t_2)=t_2$ and $K_3(y,t_2,t_3)=\left( \frac{t_3}{2}+t_2^2\right)(y+1)-2t_2^2$.

Let us fix $n\in|3,s|$ and assume that there exist  polynomials $K_j(y,t_2,\ldots,t_j)\in\boldsymbol{q}[y;t_2,\ldots,t_j]$ such that $c_j=(c_1^2-c_1)K_j(c_1,h_2,\ldots,h_j)$ for $j\in|2,n-1|$. Then
$$
\begin{array}{lll}
\lefteqn{\sum_{i=2}^{n-1}h_i\left(\sum_{\overline{u}_n\in
U_{n,i}}B_{\overline{u}_n}c_1^{u_1}\prod_{j=2}^{n-i+1}c_j^{u_j}-
(n-i+1)c_{n-i+1}\right)}\null  \\
& & \null\displaystyle\quad
=\sum_{i=2}^{n-1}h_i\sum_{\overline{u}_n\in
U_{n,i}}B_{\overline{u}_n}c_1^{u_1}
\prod_{j=2}^{n-i+1}\left((c_1^2-c_1)K_j(c_1,h_2,\ldots,h_j)\right)^{u_j}\\
& & \null\displaystyle\qquad\qquad-\sum_{i=2}^{n-1}(n-i+1)h_i
(c_1^2-c_1)K_{n-i+1}(c_1,h_2,\ldots,h_{n-i+1})\\
& & \null\displaystyle\quad
=\sum_{i=2}^{n-1}h_i\sum_{\overline{u}_n\in
U_{n,i}}B_{\overline{u}_n}c_1^{u_1}(c_1^2-c_1)^{i-u_1}
\prod_{j=2}^{n-i+1}K_j(c_1,h_2,\ldots,h_j)^{u_j}\\
& & \null\displaystyle\qquad\qquad-\sum_{i=2}^{n-1}(n-i+1)h_i(c_1^2-c_1)K_{n-i+1}(c_1,h_2,\ldots,h_{n-i+1}).
\end{array}
$$
Consequently, from~\eqref{11} with $c_n$ on the left hand side we obtain
$$
\begin{array}{lll}
\lefteqn{(n-1)c_n=h_n(c_1^n-c_1)}\null\\
& & \null\displaystyle+\sum_{i=2}^{n-1}h_i\sum_{\overline{u}_n\in
U_{n,i}}B_{\overline{u}_n}c_1^{u_1}(c_1^2-c_1)^{i-u_1}
\prod_{j=2}^{n-i+1}K_j(c_1,h_2,\ldots,h_j)^{u_j}\\
& & \null\displaystyle\qquad\qquad-\sum_{i=2}^{n-1}(n-i+1)h_i(c_1^2-c_1)K_{n-i+1}(c_1,h_2,\ldots,h_{n-i+1}),
\end{array}
$$%\end{equation}
and next, on account of Lemma~\ref{l3}~{\em(v)} we infer
$$%\begin{equation}\label{14}
\begin{array}{lll}
\lefteqn{(n-1)c_n=(c_1^2-c_1)\left(h_n\sum_{j=0}^{n-2}c_1^j+\right.}\null\\
& & \null\displaystyle\quad \sum_{i=2}^{n-1}h_i\sum_{\overline{u}_n\in
U_{n,i}}B_{\overline{u}_n}c_1^{u_1}(c_1^2-c_1)^{i-u_1-1}
\prod_{j=2}^{n-i+1}K_j(c_1,h_2,\ldots,h_j)^{u_j}\\
& & \null\displaystyle\qquad\qquad\left.-\sum_{i=2}^{n-1}(n-i+1)h_iK_{n-i+1}(c_1,h_2,\ldots,h_{n-i+1})\right).
\end{array}
$$%\end{equation}
Define $K_n(y,t_2,\ldots,t_n)\in\boldsymbol{q}[y;t_2,\ldots,t_n]$ by
$$
\begin{array}{lll}
\lefteqn{K_n(y,t_2,\ldots,t_n)=\frac{1}{n-1}\left( t_n\sum_{j=0}^{n-2}y^j+\right.}\null\\
& & \null\displaystyle +
\sum_{i=2}^{n-1}t_i\sum_{\overline{u}_n\in
U_{n,i}}B_{\overline{u}_n}y^{u_1}(y^2-y)^{i-u_1-1}
\prod_{j=2}^{n-i+1}K_j(y,t_2,\ldots,t_j)^{u_j}\\
& & \null\displaystyle\qquad\qquad\left.-\sum_{i=2}^{n-1}(n-i+1)t_iK_{n-i+1}(y,t_2,\ldots,t_{n-i+1})\right).
\end{array}
$$
Clearly we have then
\begin{equation}\label{14}
c_n=(c_1^2-c_1)K_n(c_1,h_2,\ldots,h_n).
\end{equation}
Conversely, if $c_1\in\boldsymbol{k}^\star$ is arbitrarily fixed and $c_n$ for $n\in|2,s|$ are given by~\eqref{14}, then the system of equalities~\eqref{11} is satisfied, so $\Phi_{c_1}$  given by~\eqref{13} is a solution of~$({\mathrm{AJ}}_s(H))$.

Finally, by Fact~\ref{f1} we know that for a fixed $H$ the set  ${\mathcal{S}}({\mathrm{AJ}}_s(H))$ of solutions of~$({\mathrm{AJ}}_s(H))$ is a group. By what has been proved, we infer that for the canonical epimorphism $\pi^s_1$ we have that $\pi^s_1|_{{\mathcal{S}}({\mathrm{AJ}}_s(H))}:  {\mathcal{S}}({\mathrm{AJ}}_s(H))\to \Gamma^1$ is an isomorphism. Hence  ${\mathcal{S}}({\mathrm{AJ}}_s(H))\cong\Gamma_1\cong\boldsymbol{k}^\star$.
\end{proof}

\section{The Aczel-Jabotynski formal differential equation -- case $l\geq1$}

We consider now the group ${\mathcal{S}}({\mathrm{AJ}}_s(\widetilde{H})$ of all solutions of the tansformed Acz{\'e}l-Jabotinsky deifferential equation~$({\mathrm{AJ}}_s(\widetilde{H})$, where $s\in\mathbb{N}\cup\{\infty\}$, $l\in|1,s-1|$ are fixed and $\widetilde{H}(x)=x^{l+1}+\delta x^{2l+1}$, where $\delta\in\boldsymbol{k}$ and $\delta=0$ provided $s\leq2l$. By Theorem~\ref{tNAJ} we have ${\mathcal{S}}({\mathrm{AJ}}_s(\widetilde{H})={\mathcal{L}}_l \overline{\circledcirc}{\mathcal{S}}_1({\mathrm{AJ}}_s(\widetilde{H})$. In the description of ${\mathcal{S}}_1({\mathrm{AJ}}_s(\widetilde{H}))$ for finite~$s$ we use the group ${\mathcal{S}}_1({\mathrm{AJ}}_{\infty}(\widetilde{H}))$ and we focus now our attention on this group that is we are going now to determine all solutions  $\widetilde{\Phi}(x)=x+c_{l+1}x^{l+1}+\sum_{n=l+2}^{\infty} \widetilde{c}_nx^n\in{\mathcal{S}}_1({\mathrm{AJ}}_{\infty}(\widetilde{H})$ of the differential equation~$({\mathrm{AJ}}_{\infty}(\widetilde{H})$. We have
$$
\begin{array}{lll}
\lefteqn{\frac{\dd \widetilde{\Phi}}{\dd x}\cdot(x^{l+1}+\delta x^{2l+1})=
\left(
1+(l+1)\widetilde{c}_{l+1}x^l+\sum_{n=l+2}^{\infty}n\widetilde{c}_nx^{n-1}\right) \cdot(x^{l+1}+\delta x^{2l+1})}\null\\
& \qquad\qquad\qquad\qquad & \null\displaystyle
=x^{l+1}+((l+1)\widetilde{c}_{l+1}+\delta)x^{2l+1}+\sum_{n=2l+2}^{3l} (n-l)\widetilde{c}_{n-l}X^n \\
& &  \null\displaystyle \qquad\qquad\qquad + \sum_{n=3l+1}^{\infty}\left((n-l)\widetilde{c}_{n-l}+(n-2l)\delta
\widetilde{c}_{n-2l} \right)X^n.
\end{array}
$$
On the other hand (cf. Corollary~\ref{c1} with $m=l$ and~\eqref{xxy})
$$
\begin{array}{lll}
%\lefteqn{
(\widetilde{\Phi}(x))^{l+1}+\delta(\widetilde{\Phi}(x))^{2l+1}\\
\;\displaystyle = \left(x+\widetilde{c}_{l+1}x^{l+1}+\sum_{n=l+2}^{\infty}\widetilde{c}_nx^n \right)^{l+1}+\delta\left(
x+\widetilde{c}_{l+1}x^{l+1}+\sum_{n=l+2}^{\infty}\widetilde{c}_nx^n \right)^{2l+1}\\
\;\displaystyle =
x^{l+1}+((l+1)\widetilde{c}_{l+1}+\delta)x^{2l+1}+\sum_{n=2l+2}^{3l} \left(l\widetilde{c}_{n-l}
+\sum_{\overline{u}_n\in\widetilde{U}^{l+1}_{n,l+1}} B_{\overline{u}_n}\prod_{j=l+1}^{n-l-1}
\widetilde{c}_j^{u_j}\right)x^n\\
\;\null\displaystyle \;\;
+\sum_{n=3l+1}^{\infty}\left((l+1)\widetilde{c}_{n-l}+ \sum_{\overline{u}_n\in\widetilde{U}^{l+1}_{n,l+1}} B_{\overline{u}_n}\prod_{j=l+1}^{n-l-1}
\widetilde{c}_j^{u_j}%\right.\\& & \null\displaystyle \qquad\qquad\qquad\qquad \left.
+\delta\sum_{\overline{u}_n\in \widetilde{U}^{l+1}_{n,2l+1}}B_{\overline{u}_n}
\prod_{j=l+1}^{n-2l}\widetilde{c}_j^{u_j} \right)x^n.
\end{array}
$$
Hence, by comparing coefficient standing at subsequent powers of~$x$, from~$({\mathrm{AJ}}_{\infty}(\widetilde{H})$ we obtain $\widetilde{c}_l\in\boldsymbol{k}$ and the following system of equalities
$$%\begin{equation}\label{18}
\begin{array}{lll}
\lefteqn{
(n-l)\widetilde{c}_{n-l}=(l+1)\widetilde{c}_{n-l} +\sum_{\overline{u}_n\in
\widetilde{U}^{l+1}_{n,l+1}}B_{\overline{u}_n}\prod_{j=l+1}^{n-l-1} \widetilde{c}_j^{u_j},
\quad n\in|2l+2,3l|}\null\\
\lefteqn{ \displaystyle
(n-l)\widetilde{c}_{n-l}=(l+1)\widetilde{c}_{n-l} +\sum_{\overline{u}_n\in
\widetilde{U}^{l+1}_{n,l+1}}B_{\overline{u}_n}\prod_{j=l+1}^{n-l-1} \widetilde{c}_j^{u_j}}\null\\
& & \null\displaystyle \quad +\delta \sum_{\overline{u}_n\in
\widetilde{U}^{l+1}_{n,2l+1}}B_{\overline{u}_n}\prod_{j=l+1}^{n-2l} \widetilde{c}_j^{u_j}
-(n-2l)\delta\widetilde{c}_{n-2l},\quad n\geq 3l+1,
\end{array}
$$
or, equivalently,
\begin{equation}\label{19}
\begin{array}{lll}
\lefteqn{
(n-2l-1)\widetilde{c}_{n-l}=\sum_{\overline{u}_n\in
\widetilde{U}^{l+1}_{n,l+1}}B_{\overline{u}_n}\prod_{j=l+1}^{n-l-1} \widetilde{c}_j^{u_j},
\quad n\in|2l+2,3l|}\null\\
\lefteqn{(n-2l-1)\widetilde{c}_{n-l}=\sum_{\overline{u}_n\in
\widetilde{U}^{l+1}_{n,l+1}}B_{\overline{u}_n}\prod_{j=l+1}^{n-l-1} \widetilde{c}_j^{u_j}}\null\\
& & \null\displaystyle +\delta \sum_{\overline{u}_n\in
\widetilde{U}^{l+1}_{n,2l+1}}B_{\overline{u}_n}\prod_{j=l+1}^{n-2l} \widetilde{c}_j^{u_j}
-(n-2l)\delta\widetilde{c}_{n-2l},\quad n\geq 3l+1.
\end{array}
\end{equation}

Now, we determine all solutions of~\eqref{19}. We prove

\begin{theorem}[{cf.~\cite[Theorem 2, Lemmas 2 and 3]{Reich1}}]\label{t2}
Fix $l\geq1$. There exists a~sequence of universal
polynomials $Q^{l+1}_n(y,t)\in\boldsymbol{q}[y,t]$ given recurrently by
\begin{equation}\label{seq4}
\begin{array}{lll}
\lefteqn{Q^{l+1}_{l+1}(y,t)=y,}\null\\
\lefteqn{Q^{l+1}_{rl+1}(y,t)=\frac{1}{(r-1)l}\Bigg(\sum_{\overline{u}_{n}\in
\widecheck{U}^{l+1}_{n,l+1}}B_{\overline{u}_n}
\prod_{j=1}^{r-1}Q^{l+1}_{jl+1}
(y,t)^{u_{jl+1}}}\null\\[1ex]
& & \null\displaystyle\qquad  +t%\frac{t}{(r-1)l}
\sum_{\overline{u}_{n}\in
\widetilde{U}^{l+1}_{n,2l+1}}B_{\overline{u}_n}\prod_{j=1}^{r-1} Q^{l+1}_{jl+1}(y,t)^{u_{jl+1}}\\
& & \null\displaystyle\qquad %-t\frac{(r-1)l+1}{(r-1)l}
-t((r-1)l+1)Q^{l+1}_{(r-1)l+1}(y,t) \Bigg), \mbox{ for }\,r\geq2\;\mbox{ with }\, n=(r+1)l+1,
\end{array}
\end{equation}
 such that to each $\widetilde{c}_{l+1}\in\boldsymbol{k}$
\begin{equation}\label{21}
\widetilde{\Phi}_{\widetilde{c}_{l+1}}(x)=x+\widetilde{c}_{l+1}x^{l+1}+
\sum_{p=2}^{\infty}Q^l_{pl+1}(\widetilde{c}_{l+1},\delta)x^{pl+1} \in{\mathcal{S}}_1({\mathrm{AJ}}_{\infty}(\widetilde{H})) \subset{\mathcal{N}}^{\infty,l}
\end{equation}
is a unique solution of~$({\mathrm{AJ}}_{\infty}(\widetilde{H}))$ in ${\mathcal{S}}_1({\mathrm{AJ}}_{\infty}(\widetilde{H}))$, where $\widetilde{H}(x)=x^{l+1}+\delta x^{2l+1}$. Then $\pi^{\infty}_{l+1}:({\mathrm{AJ}}_{\infty}(\widetilde{H}))\to \Gamma^{l,l}_1$ is an isomorphism, the group ${\mathcal{S}}_1({\mathrm{AJ}}_{\infty}(\widetilde{H}))$ is com\-mu\-tative and isomorphic to $(\boldsymbol{k},+)$.
\end{theorem}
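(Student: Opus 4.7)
The plan is to extract $\widetilde\Phi$ coefficient by coefficient from the system~\eqref{19} and then to identify ${\mathcal S}_1({\mathrm{AJ}}_\infty(\widetilde H))$ with $\Gamma^{l,l}_1$ via $\pi^\infty_{l+1}$. First, I would show that every solution $\widetilde\Phi(x)=x+\sum_{n\geq 2}\widetilde c_n x^n\in{\mathcal S}_1({\mathrm{AJ}}_\infty(\widetilde H))$ lies in the semicanonical locus ${\mathcal N}^{\infty,l}$. For the low coefficients $\widetilde c_j$ with $j\in|2,l|$, I would specialise~\eqref{16} to $\widetilde H$ (using indices $n\in|l+2,2l|$): since $h_{l+2}=\ldots=h_{2l}=0$ and $c_1=1$, all right hand sides vanish and $\widetilde c_2=\ldots=\widetilde c_l=0$ follow successively. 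For $n\geq l+2$ with $n\notin\mathbb N_l$, I would then induct on $n$ in~\eqref{19}: because $n-l,\,n-2l\equiv n\pmod{l}$, the inductive hypothesis kills $\delta\widetilde c_{n-2l}$, Lemma~\ref{l5} empties $\widetilde U^{l+1}_{n,l+1}$ and $\widetilde U^{l+1}_{n,2l+1}$, and $n\notin\mathbb N_l$ forces $n\neq 2l+1$, so the factor $(n-2l-1)$ is invertible and $\widetilde c_{n-l}=0$.

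Once the semicanonical shape is established, I would derive the recurrence~\eqref{seq4} and simultaneously obtain uniqueness. For $r\geq 2$, setting $n=(r+1)l+1$ in~\eqref{19}, the factor $(n-2l-1)=(r-1)l$ is invertible in $\boldsymbol k$. Applying Corollary~\ref{c3}~{\em(ii)}, together with the vanishing $\widetilde c_j=0$ for $j\notin\mathbb N_l$, collapses the two right hand sums into sums over $\widecheck U^{l+1}_{n,l+1}$ and $\widetilde U^{l+1}_{n,2l+1}$ that involve only the $\widetilde c_{jl+1}$ with $1\leq j\leq r-1$. Taking $Q^{l+1}_{l+1}(y,t):=y$ and defining $Q^{l+1}_{rl+1}$ inductively by~\eqref{seq4} then yields $\widetilde c_{rl+1}=Q^{l+1}_{rl+1}(\widetilde c_{l+1},\delta)$ for every $r\geq 1$. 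The missing index $n=2l+1$ imposes no constraint, since the computation preceding~\eqref{19} shows that both sides of $({\mathrm{AJ}}_\infty(\widetilde H))$ have $x^{2l+1}$-coefficient equal to $(l+1)\widetilde c_{l+1}+\delta$; hence $\widetilde c_{l+1}\in\boldsymbol k$ is genuinely free and~\eqref{21} is the unique candidate with prescribed $\widetilde c_{l+1}$.

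For existence I would run the construction backwards: for each $\widetilde c_{l+1}\in\boldsymbol k$, set $\widetilde c_{rl+1}:=Q^{l+1}_{rl+1}(\widetilde c_{l+1},\delta)$ for $r\geq 1$ and $\widetilde c_n:=0$ otherwise, and verify directly that every equation of~\eqref{19} holds --- those with $n\in\mathbb N_l$ by the very construction of $Q^{l+1}$, and those with $n\notin\mathbb N_l$ trivially by Lemma~\ref{l5}. The map $\pi^\infty_{l+1}|_{{\mathcal S}_1({\mathrm{AJ}}_\infty(\widetilde H))}\colon{\mathcal S}_1({\mathrm{AJ}}_\infty(\widetilde H))\to\Gamma^{l,l}_1$, $\widetilde\Phi_{\widetilde c_{l+1}}\mapsto x+\widetilde c_{l+1}x^{l+1}$, is then a restriction of a group epimorphism, hence a group homomorphism, and is bijective by the existence and uniqueness just obtained. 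Composing with the isomorphism $\Gamma^{l,l}_1\cong(\boldsymbol k,+)$ of Lemma~\ref{lx3}~{\em(ii)} yields ${\mathcal S}_1({\mathrm{AJ}}_\infty(\widetilde H))\cong(\boldsymbol k,+)$, in particular commutative. The principal obstacle is the combinatorial step in deriving~\eqref{seq4}: one must correctly apply Corollary~\ref{c3}~{\em(ii)} to collapse the sum over $\widetilde U^{l+1}_{n,l+1}=\overline U^{l+1}_{n,l+1}$ into a sum over $\widecheck U^{l+1}_{n,l+1}$ with products $\prod_{j=1}^{r-1}\widetilde c_{jl+1}^{u_{jl+1}}$, and likewise for $\widetilde U^{l+1}_{n,2l+1}=\widehat U^{l+1}_{n,2l+1}$, while tracking the multinomial factors $B_{\overline u_n}$ correctly under the reindexing $j\mapsto jl+1$.
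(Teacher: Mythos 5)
Your proposal is correct and follows essentially the same route as the paper: it solves the coefficient system~\eqref{19} recursively, uses Corollary~\ref{c3}/Lemma~\ref{l5} to force $\widetilde{c}_j=0$ for $j\notin\mathbb{N}_l$, defines the universal polynomials by~\eqref{seq4}, and deduces the isomorphism with $\Gamma^{l,l}_1\cong(\boldsymbol{k},+)$ from existence and uniqueness via $\pi^{\infty}_{l+1}$ and Lemma~\ref{lx3}~\emph{(ii)}. The only cosmetic deviations are that you derive $\widetilde{c}_2=\ldots=\widetilde{c}_l=0$ from~\eqref{16}, where the paper instead writes solutions as $x+\widetilde{c}_{l+1}x^{l+1}+\ldots$ from the outset on the strength of Lemma~\ref{lx1}, that you treat $l=1$ and $l\geq2$ uniformly while the paper splits these cases, and that you make the converse (existence) verification and the vacuousness of the $n=2l+1$ equation explicit.
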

\begin{proof}
Assume first that $l=1$. Then $|2l+2,3l|=\emptyset$ and $\widehat{U}^2_{n,3}=U_{n,3}$ so the system of equalities~\eqref{19}  we reduce to
\begin{equation}\label{22}
(n-3)\widetilde{c}_{n-1}=\sum_{\overline{u}_n\in
\overline{U}^2_{n,2}}B_{\overline{u}_n}\prod_{j=2}^{n-2}\widetilde{c}_j^{u_j} +\delta \sum_{\overline{u}_n\in U_{n,3}}B_{\overline{u}_n}\prod_{j=2}^{n-2}\widetilde{c}_j^{u_j}-\delta(n-2)\widetilde{c}_{n-2},\,n\geq4.
\end{equation}
Fix $\widetilde{c}_2$ arbitrarily. If we put $Q^2_2(y,t)=y$ then then $\widetilde{c}_2=Q_2^2(\widetilde{c}_2,\delta)$. For $n=4$ we have $\overline{U}_{4,2}=\{(0,2,0,0)\}$ and
$U_{4,3}=\{(2,1,0,0)\}$, so from~\eqref{22} we get
$$
\widetilde{c}_3=\frac{2!}{2!}\widetilde{c}_2^2+\delta\frac{3!}{2!}\widetilde{c}_2-\delta\cdot2\widetilde{c}_2=\widetilde{c}_2^2+\delta\widetilde{c}_2=:
Q^2_3(\widetilde{c}_2,\delta).
$$
Assuming that for some $n\geq4$ we have
$\widetilde{c}_{j}=Q^2_{j}(\widetilde{c}_2,\delta)$
for $j\in|2,n-2|$, from~\eqref{22} we obtain
$\widetilde{c}_{n-1}=Q^2_{n-1}(\widetilde{c}_2,\delta)$, where
\begin{equation}\label{516}
\begin{array}{lll}
\lefteqn{Q^2_{n-1}(y,t)=\frac{1}{n-3}\left(\sum_{\overline{u}_n\in
\overline{U}^2_{n,2}}B_{\overline{u}_n}\prod_{j=2}^{n-2} Q^2_j(y,t)^{u_j}\right.}\null\\
&\qquad\qquad & \null\displaystyle \left.+\delta\sum_{\overline{u}_n\in U_{n,3}}B_{\overline{u}_n}\prod_{j=2}^{n-2}Q^2_j(y,t)^{u_j}-\delta(n-2)Q^2_{n-2}(y,t)\rule{0pt}{8mm}\right).
\end{array}
\end{equation}
On this way we find $\widetilde{c}_n$ for all $n\geq 3$. Note, that~\eqref{516} is the same as~\eqref{seq4} with~$l=2$.

Fix $l\geq2$ now. Define $Q^{l+1}_{l+1}(y,t)=y$ and consider first~\eqref{19} for $n=2l+2$. Clearly $2l+2\notin\mathbb{N}_l$ and from Corollary~\ref{c3}~{\em (i)} we get
$\sum\limits_{\overline{u}_{2l+2}\in \widetilde{U}^{l+1}_{2l+2,l+1}}B_{\overline{u}_{2l+2}} \prod\limits_{j=l+1}^{l+1}\widetilde{c}_j^{u_j}=0$. Hence $\widetilde{c}_{l+2}=0$. If we assume that for some $n\in|2l+2,3l|$ (note that $n\notin\mathbb{N}_l$) we have $\widetilde{c}_j=0$ for $j\in|l+2,n-l-1|$ then by Corollary~\ref{c3}~{\em (i)} we obtain $\sum\limits_{\overline{u}_n\in
\widetilde{U}^{l+1}_{n,l+1}}B_{\overline{u}_n}\prod\limits_{j=l+1}^{n-l-1} \widetilde{c}_j^{u_j}=0$ which implies $\widetilde{c}_{n-l}=0$. Thus $\widetilde{c}_j=0$ for each $j\in|l+2,2l|$.

Let us consider the equality for $n=3l+1$ now. We proved that $\widetilde{c}_j=0$ for $j\in|l+2,2l|$. Further, we have
$$
%\begin{array}{l}
\widetilde{U}^l_{3l+1,l+1}=\{l-1,0,\ldots,0,
\raise1.2pt\hbox{$\stackrel{l+1}{2}$},0,\ldots,0\}%,\\
\mbox{ and }
\widetilde{U}_{3l+1,2l+1}^l=\{2l,0,\ldots,0,
\raise1.2pt\hbox{$\stackrel{l+1}{1}$},0,\ldots,0\},
%\end{array}
$$
hence
$$
\begin{array}{lll}
\lefteqn{l\widetilde{c}_{2l+1}=\sum_{\overline{u}_{3l+1}\in
\widetilde{U}_{3l+1,l+1}^{l+1}}B_{\overline{u}_{3l+1}} \prod_{j=l}^{2l}\widetilde{c}_j^{u_j}+\delta \sum_{\overline{u}_{3l+1}\in
\widetilde{U}_{3l+1,2l+1}^l}B_{\overline{u}_n}\prod_{j=l+1}^{l+1}
\widetilde{c}_j^{u_j}-(l+1)\delta\widetilde{c}_{l+1}}\null\\[1.5ex]
&\qquad\quad\;\; & \null\displaystyle=\frac{(l+1)!}{(l-1)!2!}\widetilde{c}_{l+1}^2+\delta
\frac{(2l+1)!}{(2l)!}\widetilde{c}_{l+1}-\delta
(l+1)\widetilde{c}_{l+1}=\frac{l(l+1)}{2}\widetilde{c}_{l+1}^2+l\delta \widetilde{c}_{l+1}.
\end{array}
$$
Thus $\widetilde{c}_{2l+1}=\frac{l+2}{2}\widetilde{c}_{l+1}^2+\delta \widetilde{c}_{l+1}$ and let us define $Q^l_{2l+1}(y,t):=\frac{l+1}{2}y^2+ty\in\boldsymbol{q}[y,t]$.

Finally, assume that for some $n\geq 3l+2$ we have
\begin{equation}\label{24}
\begin{array}{ll}
\widetilde{c}_j=0 & \mbox{ for }\, j\in|2,n-l-1|\setminus\mathbb{N}_l,\\
\widetilde{c}_{pl+1}=Q^{l+1}_{pl+1}(\widetilde{c}_{l+1};\delta) & \mbox{ for }\, p\in|0,r-1|,
\end{array}
\end{equation}
where $r\geq2$ is the last positive integer such that
$(r-1)l+1\leq n-l-1$. By our assumption~\eqref{24}, on account of Corollary~\ref{c3} we have
$$
\begin{array}{lll}
\lefteqn{\sum_{\overline{u}_n\in
\widetilde{U}^{l+1}_{n,l+1}}B_{\overline{u}_n}\prod_{j=l+1}^{n-l-1} \widetilde{c}_j^{u_j}
+\delta \sum_{\overline{u}_n\in
U^{l+1}_{n,2l+1}}B_{\overline{u}_n}\prod_{j=l+1}^{n-2l} \widetilde{c}_j^{u_j}
-(n-2l)\delta\widetilde{c}_{n-2l}}\null\\
& \qquad & \null\displaystyle =\sum_{\overline{u}_n\in \widecheck{U}^{l+1}_{n,l+1}}B_{\overline{u}_n}\prod_{j\in|l+1,n-l-1| \cap\mathbb{N}_l}
\widetilde{c}_j^{u_j}\\[1ex]
& & \null\displaystyle \qquad\qquad\qquad+\delta\sum_{\overline{u}_n\in U^{l+1}_{n,2l+1}}B_{\overline{u}_n}\prod_{j\in|l+1,n-2l|\cap\mathbb{N}_l} \widetilde{c}_j^{u_j}
 -(n-2l)\delta\widetilde{c}_{n-2l},
\end{array}
$$
consequently, we will consider the equality
\begin{equation}\label{25}
\begin{array}{lll}
\lefteqn{(n-1-2l)\widetilde{c}_{n-l}=\sum_{\overline{u}_n\in
\widecheck{U}_{n,l+1}^{l+1}}B_{\overline{u}_n}\prod_{j\in|1,n-l-1| \cap\mathbb{N}_l}
\widetilde{c}_j^{u_j}}\null\\[1ex]
&\qquad\qquad & \null \displaystyle +\delta
\sum_{\overline{u}_n\in
U_{n,2l+1}^{l+1}}B_{\overline{u}_n}\prod_{j\in|1,n-2l|\cap\mathbb{N}_l}
\widetilde{c}_j^{u_j}
 -(n-2l)\delta\widetilde{c}_{n-2l}.
\end{array}
\end{equation}
If $n\notin\mathbb{N}_l$, then similarly as above one can show that $\widetilde{c}_{n-l}=0$. If $n\in\mathbb{N}_l$, that is $n=(r+1)l+1$, then from~\eqref{25} we get
$$%\begin{equation}\label{26}
\begin{array}{lll}
\lefteqn{(r-1)l\widetilde{c}_{rl+1} =
\sum_{\overline{u}_{n}\in
\widecheck{U}^{l+1}_{n,l+1}}B_{\overline{u}_{n}}
\widetilde{c}_1^{u_1} \prod_{j=1}^{r-1}
\widetilde{c}_{jl)+1}^{u_{jl+1}}}\null\\
& & \displaystyle +\delta \sum_{\overline{u}_{n}\in
U^{l+1}_{n,2l+1}}B_{\overline{u}_{n}}\widetilde{c}_1^{u_1} \prod_{j=1}^{r-1} \widetilde{c}_{jl+1}^{u_{jl+1}}
-((r-1)l+1)\delta\widetilde{c}_{(r-1)l+1}\;\mbox{ with }\,n=(r+1)l+1.
\end{array}
$$%\end{equation}
 Thus we may define a polynomial $Q^{l+1}_{rl+1}(y,t)\in\boldsymbol{q}[y,t]$ as in~\eqref{seq4} such that
$\widetilde{c}_{rl+1}=Q^{l+1}_{rl+1}(\widetilde{c}_2,\delta)$. On this way we obtain $\widetilde{c}_j$ for every $j\geq l+1$.

We have proved, that to each $\widetilde{c}_l\in\boldsymbol{k}$ there exists a unique solution $\Phi_{\widetilde{c}_l}\in{\mathcal{S}}_1({\mathrm{AJ}}_{\infty}(\widetilde{H}))$. Consequently $\pi^{\infty}_{l+1}:{\mathcal{S}}_1({\mathrm{AJ}}_{\infty}(\widetilde{H})) \to\Gamma^{l,l}_1$ is an epimorphism, hence ${\mathcal{S}}_1({\mathrm{AJ}}_{\infty}(\widetilde{H}))$ is isomorphic to $(\boldsymbol{k},+)$ by Theorem~\ref{t3} and Lemma~\ref{lx3}~{\em (ii)}.
\end{proof}

We are able to give the desctiption of the group of solutions of the Aczel-Jabotinsky differential equation~$({\mathrm{AJ}}_{\infty}(H))$. From Theorems~\ref{tNAJ} and \ref{t2} we obtain

\begin{corollary}\label{c4}
Fix $l\in\mathbb{N}$ and $H(x)=x^{l+1}+\sum_{j=l+2}^{\infty}h_jx^j\in\ps{k}{x}$. Let $T(x)=x+\sum_{j=2}^{\infty}v_jx^j\in\Gamma_1$ transforms $({\mathrm{AJ}}_{\infty}(H))$ to~$({\mathrm{AJ}}_{\infty}(\widetilde{H}))$, where $\widetilde{H}(x)=x^{l+1}+\delta x^{2l+1}$ with some $\delta\in\boldsymbol{k}$. Define a~sequence of universal polynomials $Q^l_n(y,t)\in\boldsymbol{q}[y,t]$ by~\eqref{seq4}.
\begin{enumerate}[(i)]
\item To every $c_1\in\boldsymbol{E}_l$ and $c_{l+1}\in\boldsymbol{k}$
$$
\begin{array}{lll}
\lefteqn{\widehat{\Phi}_{c_1,c_l}=L_{c_1}\circ\widetilde{\Phi}_{c_1^{-1}c_l} =c_1x+c_{l+1}x^{l+1} }\null\\
&\qquad&\null\displaystyle +
\sum_{p=2}^{\infty}c_1Q^{l+1}_{pl+1}(c_1^{-1}c_{l+1},\delta) x^{pl+1}\in{\mathcal{S}}({\mathrm{AJ}}_{\infty}(\widetilde{H})) \subset{\mathcal{N}}^{\infty,l}
\end{array}
$$
is the unique solution of~$({\mathrm{AJ}}_{\infty}(\widetilde{H}))$, where $\widetilde{\Phi}_{\widetilde{c}_{l+1}}\in{\mathcal{S}}_1({\mathrm{AJ}}_{\infty}(\widetilde{H}))$ is a solution of~$({\mathrm{AJ}}_{\infty}(\widetilde{H}))$ described in Theorem~\ref{t2}. The group ${\mathcal{S}}({\mathrm{AJ}}_{\infty}(\widetilde{H}))$ of all solutions of~$({\mathrm{AJ}}_{\infty}(\widetilde{H}))$ is a direct product ${\mathcal{L}}_l\circledcirc {\mathcal{S}}_1({\mathrm{AJ}}_{\infty}(\widetilde{H}))$ of the subgroups ${\mathcal{L}}_l$ and ${\mathcal{S}}_1({\mathrm{AJ}}_{\infty}(\widetilde{H}))$ and it is then isomorphic to $\Gamma^{l,l}\cong(\boldsymbol{E}_l\times\boldsymbol{k})$.
\item To every $c_1\in\boldsymbol{E}_l$ and $c_{l+1}\in\boldsymbol{k}$
$$
\begin{array}{rcl}
\Phi_{c_1,c_{l+1}}&=&T\circ\widehat{\Phi}_{c_1,c_{l+1}}\circ T^{\circ-1}=T\circ\left(L_{c_1}\circ\widetilde{\Phi}_{c_1^{-1}c_l}\right)\circ T^{\circ-1}\\
&=&\left(T\circ L_{c_1}\circ T^{\circ-1}\right)\circ\left(T\circ \widetilde{\Phi}_{c_1^{-1}c_l}\circ T^{\circ-1}\right)
\end{array}
$$
is the unique solution of the Aczel-Jabotinsky differential equation~$({\mathrm{AJ}}_{\infty}(H))$, where $\widehat{\Phi}_{c_1^{-1}c_l}\in{\mathcal{S}}_1({\mathrm{AJ}}_{\infty}(\widetilde{H}))$ is a solution of~$({\mathrm{AJ}}_{\infty}(\widetilde{H}))$. Hence the group ${\mathcal{S}}({\mathrm{AJ}}_{\infty}(H))$ of all solutions of~$({\mathrm{AJ}}_{\infty}(H))$ is is a direct product ${\mathcal{A}}_T({\mathcal{L}}_l)\circledcirc {\mathcal{A}}_T({\mathcal{S}}_1({\mathrm{AJ}}_{\infty}(\widetilde{H})))$ of the subgroups ${\mathcal{A}}_T({\mathcal{L}}_l)$ and ${\mathcal{A}}_T({\mathcal{S}}_1({\mathrm{AJ}}_{\infty}(\widetilde{H})))$ and it is also isomorphic to $\Gamma^{l,l}\cong(\boldsymbol{E}_l\times\boldsymbol{k})$.
\end{enumerate}
\end{corollary}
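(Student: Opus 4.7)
The plan is to combine the semidirect product decomposition supplied by Theorem~\ref{tNAJ} (applied with $s=\infty$) with the explicit parametrisation of ${\mathcal{S}}_1({\mathrm{AJ}}_{\infty}(\widetilde{H}))$ from Theorem~\ref{t2}, and then to upgrade the semidirect product to a direct product by establishing commutativity between the two factors. For part~(i), I would start from Theorem~\ref{tNAJ} in the trivial case $T=\id$, $H=\widetilde{H}$, obtaining
$$
{\mathcal{S}}({\mathrm{AJ}}_{\infty}(\widetilde{H}))={\mathcal{L}}_l\overline{\circledcirc} {\mathcal{S}}_1({\mathrm{AJ}}_{\infty}(\widetilde{H})),
$$
so every solution factors uniquely as $L_{c_1}\circ\Psi$ with $c_1\in\boldsymbol{E}_l$ and $\Psi\in{\mathcal{S}}_1({\mathrm{AJ}}_{\infty}(\widetilde{H}))$. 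Theorem~\ref{t2} identifies $\Psi=\widetilde{\Phi}_{\widetilde{c}_{l+1}}$ for a unique $\widetilde{c}_{l+1}\in\boldsymbol{k}$; comparing the coefficient of $x^{l+1}$ on both sides of $L_{c_1}\circ\widetilde{\Phi}_{\widetilde{c}_{l+1}}(x)=c_1x+c_{l+1}x^{l+1}+\ldots$ forces $\widetilde{c}_{l+1}=c_1^{-1}c_{l+1}$, and substituting produces the closed form claimed for $\widehat{\Phi}_{c_1,c_{l+1}}$.

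The key step is to turn the semidirect decomposition into a direct one. Here I would use that, by Theorem~\ref{t2}, the explicit formula $\widetilde{\Phi}_{\widetilde{c}_{l+1}}(x)=x+\widetilde{c}_{l+1}x^{l+1}+\sum_{p\geq 2}Q^{l+1}_{pl+1}(\widetilde{c}_{l+1},\delta)x^{pl+1}$ shows that $\widetilde{\Phi}_{\widetilde{c}_{l+1}}\in{\mathcal{N}}^{\infty,l}$. Invoking the characterisation ${\mathcal{N}}^{\infty,l}=\{f\in\Gamma^{\infty}:f\circ L_c=L_c\circ f\}$ given in Section~\ref{ringfps} for a primitive $c\in\boldsymbol{E}_l$, together with the cyclicity of $\boldsymbol{E}_l$, yields that $\widetilde{\Phi}_{\widetilde{c}_{l+1}}$ commutes with every $L_{c_1}$, $c_1\in\boldsymbol{E}_l$. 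Fact~\ref{fxy1}(iii)--(ii) then promotes the semidirect product to a direct product ${\mathcal{L}}_l\circledcirc{\mathcal{S}}_1({\mathrm{AJ}}_{\infty}(\widetilde{H}))$. Since ${\mathcal{L}}_l\cong\boldsymbol{E}_l$ and ${\mathcal{S}}_1({\mathrm{AJ}}_{\infty}(\widetilde{H}))\cong(\boldsymbol{k},+)$ (Theorem~\ref{t2}), the overall group is isomorphic to $\boldsymbol{E}_l\times\boldsymbol{k}$, which by Lemma~\ref{lx3}~(i) is in turn isomorphic to $\Gamma^{l,l}$.

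For part~(ii) I would transport the structure by the inner automorphism ${\mathcal{A}}_{T^{-1}}$. Corollary~\ref{cstrAJ} guarantees that ${\mathcal{A}}_T$ is a group isomorphism ${\mathcal{S}}({\mathrm{AJ}}_{\infty}(H))\to{\mathcal{S}}({\mathrm{AJ}}_{\infty}(\widetilde{H}))$; its inverse pulls back both the closed-form expression and the direct-product decomposition from~(i). The identity
$$
T\circ(L_{c_1}\circ\widetilde{\Phi}_{c_1^{-1}c_{l+1}})\circ T^{\circ-1} =\bigl(T\circ L_{c_1}\circ T^{\circ-1}\bigr)\circ\bigl(T\circ\widetilde{\Phi}_{c_1^{-1}c_{l+1}}\circ T^{\circ-1}\bigr)
$$
exhibits the two subgroups ${\mathcal{A}}_T({\mathcal{L}}_l)$ and ${\mathcal{A}}_T({\mathcal{S}}_1({\mathrm{AJ}}_{\infty}(\widetilde{H})))$ explicitly; since inner automorphisms preserve direct products and isomorphism types, the final isomorphism with $\Gamma^{l,l}\cong\boldsymbol{E}_l\times\boldsymbol{k}$ is automatic.

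The only non-routine step is the passage from a semidirect to a direct product in~(i), that is, the verification that ${\mathcal{L}}_l$ and ${\mathcal{S}}_1({\mathrm{AJ}}_{\infty}(\widetilde{H}))$ commute elementwise. This hinges on the observation---already built into Theorem~\ref{t2}---that every solution in ${\mathcal{S}}_1({\mathrm{AJ}}_{\infty}(\widetilde{H}))$ is semicanonical of order~$l$, so once that theorem is in hand the remainder of Corollary~\ref{c4} is essentially a book-keeping exercise combining Theorems~\ref{tNAJ}, \ref{t2} and Fact~\ref{fxy1}.
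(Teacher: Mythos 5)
Your proposal is correct and follows essentially the same route as the paper: decompose each solution as $L_{c_1}\circ\widetilde{\Phi}_{c_1^{-1}c_{l+1}}$ via Theorem~\ref{tNAJ} and Theorem~\ref{t2}, upgrade the semidirect product to a direct one by noting that ${\mathcal{S}}_1({\mathrm{AJ}}_{\infty}(\widetilde{H}))\subset{\mathcal{N}}^{\infty,l}$ commutes elementwise with ${\mathcal{L}}_l$ (Fact~\ref{fxy1}), and transport everything by ${\mathcal{A}}_T$ for part (ii). The only cosmetic difference is that the paper deduces the isomorphism with $\Gamma^{l,l}$ from the restricted projection $\pi^{\infty}_{l+1}$ being an isomorphism, while you compose the isomorphisms of the two factors and invoke Lemma~\ref{lx3}, which is equivalent.
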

\begin{proof}
Fix a solution $\widehat{\Phi}(x)=c_1x+\sum_{j=2}^{\infty}c_jx^j\in{\mathcal{S}}({\mathrm{AJ}}_{\infty}(\widetilde{H}))$. Then $c_1\in\boldsymbol{E}_l$ and from the proof of Theorem~\ref{tNAJ} we get $\widehat{\Phi}=L_{c_1}\circ\left(L_{c_1^{-1}}\circ\widehat{\Phi}\right)\in {\mathcal{L}}_l\overline{\circledcirc}{\mathcal{S}}_1({\mathrm{AJ}}_{\infty}(\widetilde{H}))$ and $(L_{c_1^{-1}}\circ\widehat{\Phi})(x)=x+\sum_{j=2}^{\infty}c_1^{-1}c_jx^j\in{\mathcal{S}}_1({\mathrm{AJ}}_{\infty}(\widetilde{H}))$. From Lemma~\ref{lx1} we obtain then $(L_{c_1^{-1}}\circ\widehat{\Phi})(x)=x+c_1^{-1}c_{l+1}x^{l+1} +\sum_{j=l+2}^{\infty}c_1^{-1}c_jx^j$. Theorem~\ref{t2} implies that $\pi^{\infty}_{l+1}|_{{\mathcal{S}}_1({\mathrm{AJ}}_{\infty}(\tilde{H}))}: {\mathcal{S}}_1({\mathrm{AJ}}_{\infty}(\widetilde{H}))\to\Gamma^{l,l}_1$ is an epimorphism and moreover
$$
(L_{c_1^{-1}}\circ\widehat{\Phi})(x)=x+c_1^{-1}c_{l+1}x^{l+1}+ \sum_{p=2}^{\infty}Q^{l+1}_{pl+1}(c_1^{-1}c_{l+1},\delta)x^{pl+1} \in{\mathcal{S}}_1({\mathrm{AJ}}_{\infty}(\widetilde{H})) \subset{\mathcal{N}}^{\infty,l}.
$$
Note that $L_{c_1}\in{\mathcal{L}}_l$ and $L_{c_1^{-1}}\circ\widehat{\Phi}\in{\mathcal{N}}^{\infty,l}$, so $L_{c_1}$ and $L_{c_1^{-1}}\circ\widehat{\Phi}$ commute. Hence both, ${\mathcal{L}}_l$ and ${\mathcal{S}}_1({\mathrm{AJ}}_{\infty}(\widetilde{H}))$ are normal divisors in ${\mathcal{S}}({\mathrm{AJ}}_{\infty}(\widetilde{H}))$, therefore ${\mathcal{S}}({\mathrm{AJ}}_{\infty}(\widetilde{H}))= {\mathcal{L}}_l\circledcirc{\mathcal{S}}_1({\mathrm{AJ}}_{\infty}(\widetilde{H}))$. Finally,
$$
\widehat{\Phi}(x)=\widehat{\Phi}_{c_1,c_l}(x)=c_1x+c_{l+1}x^{l+1}+ \sum_{p=2}^{\infty}c_1Q^{l+1}_{pl+1}(c_1^{-1}c_l,\delta)x^{pl+1},
$$
$\pi^{\infty}_s|_{{\mathcal{S}}({\mathrm{AJ}}_{\infty}(\tilde{H}))}: {\mathcal{S}}({\mathrm{AJ}}_{\infty}(\widetilde{H}))\to\Gamma^{l,l}$ is an epimorphism with $\ker\pi^{\infty}_s|_{{\mathcal{S}}({\mathrm{AJ}}_{\infty}(\tilde{H}))}= \{L_1\}$, so ${\mathcal{S}}({\mathrm{AJ}}_{\infty}(\widetilde{H})) \cong\Gamma^{l,l}\cong(\boldsymbol{E}_l\times\boldsymbol{k})$.

The statement~{\em (ii)} is then a consequence of~{\em (i)} and Theorem~\ref{tNAJ}~{\em (ii)}.
\end{proof}

We now consider the case $s\in\mathbb{N}$. The proof of the results we present here runns on a similar way to the proof of Corollary~\ref{c4} an it will be omitted.

\begin{corollary}\label{c5}
Fix $l\in\mathbb{N}$, $s\geq2l+1$ and $H(x)=x^{l+1}+\sum_{j=l+2}^sh_jx^j\in\ps{k}{x}_s$. Assume that $T(x)=x+\sum_{j=2}^sv_jx^j\in\Gamma^s_1$ transforms $({\mathrm{AJ}}_s(H))$ to~$({\mathrm{AJ}}_s(\widetilde{H}))$, $\widetilde{H}(x)=x^{l+1}+\delta x^{2l+1}$ with some $\delta\in\boldsymbol{k}$. Let $r\in\mathbb{N}$ be the last positive integer such that $rl+1\leq s$ and define a~sequence of  polynomials $Q^{l+1}_n(y,t)\in\boldsymbol{q}[y,t]$.
\begin{enumerate}[(i)]
\item To every $c_1\in\boldsymbol{E}_l$ and $c_{l+1},c_{s-l+1},\ldots,c_s\in\boldsymbol{k}$
$$
\begin{array}{lll}
\lefteqn{\widehat{\Phi}_{c_1,c_{l+1},c_{s-l+1},\ldots,c_s}=L_{c_1}\circ\left( \widetilde{\Phi}_{c_1^{-1}c_{l+1}}\circ \overline{\Phi}_{c_1^{-1}c_{s-l+1},\ldots,c_1^{-1}c_s}\right) }\null\\
& \qquad\quad &\null\displaystyle  =c_1x+c_{l+1}x^{l+1} +
\sum_{p=2}^{r-1}c_1Q^{l+1}_{pl+1}(c_1^{-1}c_{l+1},\delta)x^{pl+1}+ \sum_{j=s-l+1}^{rl}c_jx^j\\
& & \null\displaystyle\quad +\left(c_1Q^{l+1}_{rl+1}(c_1^{-1}c_{l+1},\delta)+c_{rl+1}\right)x^{rl+1}+ \sum_{j=rl+2}^sc_jx^j\in{\mathcal{S}}({\mathrm{AJ}}_s(\widetilde{H}))
\end{array}
$$
is the unique solution of~$({\mathrm{AJ}}_s(\widetilde{H}))$, where $\overline{\Phi}_{c_1^{-1}c_{s-l+1},\ldots,c_1^{-1}c_s}\in{\mathcal{S}}_1^l ({\mathrm{AJ}}_s(\widetilde{H}))$, $\widetilde{\Phi}_{\widetilde{c}_{l+1}}= \pi^{\infty}_s\widecheck{\Phi}_{\widetilde{c}_{l+1}}$ and $\widecheck{\Phi}_{\widetilde{c}_{l+1}}\in{\mathcal{S}}_1({\mathrm{AJ}}_{\infty}( \widetilde{H}))$ is a solution of~$({\mathrm{AJ}}_{\infty}(\widetilde{H}))$ described in Theorem~\ref{t2}. Then the group ${\mathcal{S}}({\mathrm{AJ}}_s(\widetilde{H}))$ of all solutions of~$({\mathrm{AJ}}_s(\widetilde{H}))$ is a semi-direct product ${\mathcal{L}}_l\overline{\circledcirc}\left(\pi^{\infty}_s{\mathcal{S}}_1 ({\mathrm{AJ}}_{\infty}(\widetilde{H}))\circledcirc{\mathcal{S}}_1^l ({\mathrm{AJ}}_s(\widetilde{H}))\right)$ of subgroups ${\mathcal{L}}_l$, $\pi^{\infty}_s{\mathcal{S}}_1({\mathrm{AJ}}_{\infty}(\widetilde{H}))$ and ${\mathcal{S}}_1^l({\mathrm{AJ}}_s(\widetilde{H}))$ and it is then isomorphic to $(\boldsymbol{E}_l\times\boldsymbol{k}^{l+1},\overline{\diamond})$.
\item To every $c_1\in\boldsymbol{E}_l$ and $c_{l+1},c_{s-l+1},\ldots,c_s\in\boldsymbol{k}$
$$
\begin{array}{rcl}
\Phi_{c_1,c_{l+1},c_{s-l+1},\ldots,c_s}&=&T\circ\widehat{\Phi}_{c_1,c_{l+1}, c_{s-l+1},\ldots,c_s}\circ T^{\circ-1}\\
&=&T\circ\left(L_{c_1}\circ\widetilde{\Phi}_{c_1^{-1}c_{l+1}} \circ \overline{\Phi}_{c_1^{-1}c_{s-l+1},\ldots,c_1^{-1}c_s}\right)\circ T^{\circ-1}\\
&=&\left(T\circ L_{c_1}\circ T^{\circ-1}\right)\circ\left(T\circ \widetilde{\Phi}_{c_1^{-1}c_{l+1}}\circ T^{\circ-1}\right)\\
& & \qquad\qquad\quad \circ\left( T\circ \overline{\Phi}_{c_1^{-1}c_{s-l+1},\ldots,c_1^{-1}c_s}\circ T^{\circ-1}\right)
\end{array}
$$
is the unique solution of the Aczel-Jabotinsky differential equation~$({\mathrm{AJ}}_s(H))$, where $\widetilde{\Phi}_{\widetilde{c}_{l+1}}=\pi^{\infty}_s\widecheck{\Phi}_{\widetilde{c}_{l+1}}$, $\widecheck{\Phi}_{\widetilde{c}_{l+1}}\in{\mathcal{S}}_1({\mathrm{AJ}}_{\infty}(\widetilde{H}))$ is a solution of~$({\mathrm{AJ}}_{\infty}(\widetilde{H}))$ and $\overline{\Phi}_{c_1^{-1}c_{s-l+1},\ldots,c_1^{-1}c_s}\in{\mathcal{S}}_1^l ({\mathrm{AJ}}_s(\widetilde{H}))$. Hence the group ${\mathcal{S}}({\mathrm{AJ}}_s(H))$ of all solutions of~$({\mathrm{AJ}}_s(H))$ is a semidirect product
$$
{\mathcal{A}}_T({\mathcal{L}}_l)\overline{\circledcirc}\left( {\mathcal{A}}_T(\pi^{\infty}_s{\mathcal{S}}_1({\mathrm{AJ}}_{\infty}( \widetilde{H})))\circledcirc{\mathcal{A}}_T({\mathcal{S}}_1^l ({\mathrm{AJ}}_s(\widetilde{H})))\right)
$$
of the subgroups ${\mathcal{A}}_T({\mathcal{L}}_l)$, ${\mathcal{A}}_T(\pi^{\infty}_s{\mathcal{S}}_1({\mathrm{AJ}}_{\infty}(\widetilde{H})))$ and ${\mathcal{A}}_T({\mathcal{S}}_1^l({\mathrm{AJ}}_s(\widetilde{H})))$ and it is also isomorphic to $(\boldsymbol{E}_l\times\boldsymbol{k}^{l+1},\overline{\diamond})$.
\end{enumerate}
\end{corollary}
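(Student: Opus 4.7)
The plan is to reduce everything to Corollary~\ref{c4} by imitating its scheme, with the extra factor ${\mathcal{S}}_1^l({\mathrm{AJ}}_s(\widetilde{H}))$ supplying the $l$ additional parameters that did not occur in the formal case. For part~(i) I would start with Theorem~\ref{tNAJ}, which already gives
$$
{\mathcal{S}}({\mathrm{AJ}}_s(\widetilde{H}))={\mathcal{L}}_l\,\overline{\circledcirc}\,{\mathcal{S}}_1({\mathrm{AJ}}_s(\widetilde{H})),
$$
and then apply Corollary~\ref{t3}(i) together with Lemma~\ref{lxx1}(vi) to rewrite the normal factor as the internal direct product $\pi^{\infty}_s{\mathcal{S}}_1({\mathrm{AJ}}_{\infty}(\widetilde{H}))\,\circledcirc\,{\mathcal{S}}_1^l({\mathrm{AJ}}_s(\widetilde{H}))$, the commutativity of the two subgroups coming precisely from Lemma~\ref{lxx1}(vi). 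Parameterising the infinite-order factor by $\widetilde{c}_{l+1}\in\boldsymbol{k}$ via Theorem~\ref{t2} and the truncated tail by $(\overline{c}_{s-l+1},\ldots,\overline{c}_s)\in\boldsymbol{k}^l$ via Lemma~\ref{lxx1}(ii), I can then write every solution uniquely as $\widehat{\Phi}=L_{c_1}\circ\widetilde{\Phi}_{c_1^{-1}c_{l+1}}\circ\overline{\Phi}_{c_1^{-1}c_{s-l+1},\ldots,c_1^{-1}c_s}$, and expanding the three factors with the aid of the polynomials $Q^{l+1}_{pl+1}$ from Theorem~\ref{t2} yields the displayed power-series formula for $\widehat{\Phi}_{c_1,c_{l+1},c_{s-l+1},\ldots,c_s}$.

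The central computation is the identification of the substitution law on this triple-factor decomposition with the rule $\overline{\diamond}$. I plan to use three commutation facts: (a)~every $\widetilde{\Phi}\in\pi^{\infty}_s{\mathcal{S}}_1({\mathrm{AJ}}_{\infty}(\widetilde{H}))$ lies in $\mathcal{N}^{s,l}$ and therefore commutes with every $L_{d_1}$, $d_1\in\boldsymbol{E}_l$; (b)~by Lemma~\ref{lxx1}(vi) elements of $\pi^{\infty}_s{\mathcal{S}}_1({\mathrm{AJ}}_{\infty}(\widetilde{H}))$ commute with elements of ${\mathcal{S}}_1^l({\mathrm{AJ}}_s(\widetilde{H}))$; and (c)~the direct computation $\overline{\Phi}\circ L_{d_1}=L_{d_1}\circ\overline{\Phi}^{(d_1)}$, where $\overline{\Phi}^{(d_1)}$ has coefficient $d_1^{j-1}\overline{c}_j$ at $x^j$ for $j\in|s-l+1,s|$. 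Pushing the middle linear factor $L_{d_1}$ leftward across $\widetilde{\Phi}_{c_1^{-1}c_{l+1}}\circ\overline{\Phi}_{c_1^{-1}c_{s-l+1},\ldots}$ and then exploiting the additive-group structures $(\boldsymbol{k},+)$ and $(\boldsymbol{k}^l,+)$ of the two commuting subgroups, a short bookkeeping calculation transforms the parameter pair $(c_1,c_{l+1},(c_j))$, $(d_1,d_{l+1},(d_j))$ into $(c_1d_1,\,c_1d_{l+1}+d_1c_{l+1},\,(c_1d_j+d_1^jc_j))$; this is precisely $\overline{\diamond}$, the identity $d_1^{l+1}=d_1$ for $d_1\in\boldsymbol{E}_l$ being what produces the exponent $j=l+1$ required by the statement.

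Part~(ii) is then purely formal: by Theorem~\ref{tNAJ}(ii) the inner automorphism ${\mathcal{A}}_{T^{-1}}\colon{\mathcal{S}}({\mathrm{AJ}}_s(\widetilde{H}))\to{\mathcal{S}}({\mathrm{AJ}}_s(H))$ is a group isomorphism, so applying it factor by factor to the decomposition of~(i) yields
$$
{\mathcal{S}}({\mathrm{AJ}}_s(H))={\mathcal{A}}_T({\mathcal{L}}_l)\,\overline{\circledcirc}\,\bigl({\mathcal{A}}_T(\pi^{\infty}_s{\mathcal{S}}_1({\mathrm{AJ}}_{\infty}(\widetilde{H})))\,\circledcirc\,{\mathcal{A}}_T({\mathcal{S}}_1^l({\mathrm{AJ}}_s(\widetilde{H})))\bigr),
$$
together with the claimed explicit form of $\Phi_{c_1,c_{l+1},c_{s-l+1},\ldots,c_s}$ and the isomorphism to $(\boldsymbol{E}_l\times\boldsymbol{k}^{l+1},\overline{\diamond})$. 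The main obstacle I anticipate is precisely step~(c) and the subsequent matching of parameters with $\overline{\diamond}$; once this correspondence is pinned down the rest of the argument is assembly of Theorems~\ref{tNAJ} and~\ref{t2}, Corollary~\ref{t3}, Lemma~\ref{lxx1}, Fact~\ref{fxy1} and Lemma~\ref{lxy1}.
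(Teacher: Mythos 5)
Your proposal is correct and follows exactly the route the paper intends: the proof of Corollary~\ref{c5} is omitted there as being ``similar to the proof of Corollary~\ref{c4}'', and your argument is precisely that adaptation, combining Theorem~\ref{tNAJ} and Theorem~\ref{t2} with Corollary~\ref{t3} and Lemma~\ref{lxx1} to split off the truncated tail factor ${\mathcal{S}}_1^l({\mathrm{AJ}}_s(\widetilde{H}))$ and then identifying the group law with $\overline{\diamond}$ via the commutation and conjugation computations. No gaps; the key twist $\overline{\Phi}\circ L_{d_1}=L_{d_1}\circ\overline{\Phi}^{(d_1)}$ with coefficients $d_1^{j-1}\overline{c}_j$ is exactly what makes the product with ${\mathcal{L}}_l$ only semidirect and yields the exponents $d_1^j$ in $\overline{\diamond}$.
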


\begin{corollary}\label{c6}
Fix $2\leq l+1\leq s\leq2l$ and $H(x)=x^{l+1}+\sum_{j=l+2}^sh_jx^j\in\ps{k}{x}_s$. Assume that $T(x)=x+\sum_{j=2}^sv_jx^j\in\Gamma^s_1$ transforms $({\mathrm{AJ}}_s(H))$ to~$({\mathrm{AJ}}_s(\widetilde{H}))$, where $\widetilde{H}(x)=x^{l+1}$.
\begin{enumerate}[(i)]
\item To every $c_1\in\boldsymbol{E}_l$ and $c_{s-l+1},\ldots,c_s\in\boldsymbol{k}$
$$
\widehat{\Phi}_{c_1,c_{s-l+1},\ldots,c_s}=L_{c_1}\circ \overline{\Phi}_{c_1^{-1}c_{s-l+1},\ldots,c_1^{-1}c_s}=c_1x+ \sum_{j=s-l+1}^sc_jx^j\in{\mathcal{S}}({\mathrm{AJ}}_s(\widetilde{H}))
$$
is the unique solution of~$({\mathrm{AJ}}_s(\widetilde{H}))$, where $\overline{\Phi}_{c_1^{-1}c_{s-l+1},\ldots,c_1^{-1}c_s}\in{\mathcal{S}}_1^l ({\mathrm{AJ}}_s(\widetilde{H}))$. The group ${\mathcal{S}}({\mathrm{AJ}}_s(\widetilde{H}))$ of all solutions of~$({\mathrm{AJ}}_s(\widetilde{H}))$ is a semidirect product ${\mathcal{L}}_l\overline{\circledcirc}{\mathcal{S}}_1^l ({\mathrm{AJ}}_s(\widetilde{H}))$ of the subgroups ${\mathcal{L}}_l$ and ${\mathcal{S}}_1^l({\mathrm{AJ}}_s(\widetilde{H}))$ and it is then isomorphic to $(\boldsymbol{E}_l\times\boldsymbol{k}^l,\widehat{\diamond})$.
\item To every $c_1\in\boldsymbol{E}_l$ and $c_{s-l+1},\ldots,c_s\in\boldsymbol{k}$
$$
\begin{array}{rcl}
\Phi_{c_1,c_{s-l+1},\ldots,c_s}&=&T\circ\widehat{\Phi}_{c_1,c_{l+1},c_{s-l+1}, \ldots,c_s}\circ T^{\circ-1}\\
&=&T\circ\left(L_{c_1}\circ \overline{\Phi}_{c_1^{-1}c_{s-l+1},\ldots,c_1^{-1}c_s}\right)\circ T^{\circ-1}\\
&=&\left(T\circ L_{c_1}\circ T^{\circ-1}\right)\circ\left( T\circ \overline{\Phi}_{c_1^{-1}c_{s-l+1},\ldots,c_1^{-1}c_s}\circ T^{\circ-1}\right)
\end{array}
$$
is the unique solution of the Aczel-Jabotinsky differential equation~$({\mathrm{AJ}}_s(H))$, where $\overline{\Phi}_{c_1^{-1}c_{s-l+1},\ldots,c_1^{-1}c_s}\in{\mathcal{S}}_1^l ({\mathrm{AJ}}_s(\widetilde{H}))$. Hence the group ${\mathcal{S}}({\mathrm{AJ}}_s(H))$ of all solutions of~$({\mathrm{AJ}}_s(H))$ is a semidirect product ${\mathcal{A}}_T({\mathcal{L}}_l)\overline{\circledcirc} {\mathcal{A}}_T({\mathcal{S}}_1^l({\mathrm{AJ}}_s(\widetilde{H})))$
of the subgroups ${\mathcal{A}}_T({\mathcal{L}}_l)$ and ${\mathcal{A}}_T({\mathcal{S}}_1^l({\mathrm{AJ}}_s(\widetilde{H})))$ and it is also isomorphic to $(\boldsymbol{E}_l\times\boldsymbol{k}^l,\widehat{\diamond})$.
\end{enumerate}
\end{corollary}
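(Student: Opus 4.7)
The plan is to reduce the general $H$ to the normal form $\widetilde{H}(x)=x^{l+1}$ (the constant $\delta$ is forced to vanish by the hypothesis $s\leq 2l$) and then to read off the structure of $\mathcal{S}(\mathrm{AJ}_s(\widetilde{H}))$ from the results already in place. By Lemma~\ref{l6} there is $T\in\Gamma_1^s$ transforming $(\mathrm{AJ}_s(H))$ to $(\mathrm{AJ}_s(\widetilde{H}))$, and Corollary~\ref{cstrAJ} identifies $\mathcal{A}_T$ as a group isomorphism $\mathcal{S}(\mathrm{AJ}_s(H))\to\mathcal{S}(\mathrm{AJ}_s(\widetilde{H}))$. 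Consequently statement~(ii) will follow from~(i) by pulling back along $\mathcal{A}_{T^{-1}}$, using that inner automorphisms preserve normal subgroups and hence semi-direct decompositions.

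For part~(i), Theorem~\ref{tNAJ}~(i) gives the factorization $\mathcal{S}(\mathrm{AJ}_s(\widetilde{H}))=\mathcal{L}_l\overline{\circledcirc}\mathcal{S}_1(\mathrm{AJ}_s(\widetilde{H}))$, while Corollary~\ref{t3}~(ii) collapses the normal factor in the range $l+1\leq s\leq 2l$ to $\mathcal{S}_1^l(\mathrm{AJ}_s(\widetilde{H}))$. By Lemma~\ref{lxx1}~(ii) this subgroup is precisely $\{x+\sum_{j=s-l+1}^sc_jx^j:c_j\in\boldsymbol{k}\}\cong(\boldsymbol{k}^l,+)$. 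Every solution of $(\mathrm{AJ}_s(\widetilde{H}))$ therefore factors uniquely as $\widehat{\Phi}=L_{c_1}\circ\overline{\Phi}$ with $c_1\in\boldsymbol{E}_l$ and $\overline{\Phi}\in\mathcal{S}_1^l(\mathrm{AJ}_s(\widetilde{H}))$; substituting $\overline{\Phi}=\overline{\Phi}_{c_1^{-1}c_{s-l+1},\ldots,c_1^{-1}c_s}$ produces the displayed closed form for $\widehat{\Phi}_{c_1,c_{s-l+1},\ldots,c_s}$.

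To complete part~(i) one must verify that the bijective parametrization $(c_1,(c_{s-l+1},\ldots,c_s))\mapsto\widehat{\Phi}_{c_1,(c_{s-l+1},\ldots,c_s)}$ is a group isomorphism onto $(\boldsymbol{E}_l\times\boldsymbol{k}^l,\widehat{\diamond})$. I would do this by expanding $(\widehat{\Phi}_{c_1,(c_j)}\circ\widehat{\Phi}_{d_1,(d_j)})(x)$ modulo $x^{s+1}$ and comparing the coefficients of $x^j$ for $j\in\{1\}\cup|s-l+1,s|$ against $(c_1d_1,(c_1d_j+d_1^jc_j)_j)$. The key expansion is $\Psi(x)^j=d_1^jx^j+jd_1^{j-1}\sum_{k\geq s-l+1}d_kx^{j+k-1}+\cdots$ for $j\geq s-l+1$, in which higher-order monomials live at degree at least $2(s-l+1)-1$; in the range $s\leq 2l$ the indexing constraints on $c$ and $d$ force the surviving contributions at the relevant degrees to rearrange into the $\widehat{\diamond}$-law, with the constraint $c_1^l=1$ used to simplify the powers of $c_1$ that appear.

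The main obstacle will be exactly this last coefficient check. One must balance two competing effects: the hypothesis $s\leq 2l$ makes many cross terms land beyond $x^{s+1}$, but there are borderline cases (e.g.\ $s=2l-1$ with $j=k=l$) in which a term of degree exactly $s$ appears in the expansion of $\Psi^j$, and one has to track which of these residual contributions remain and verify that they reassemble into $\widehat{\diamond}$. This is the step the authors indicate runs analogously to the proof of Corollary~\ref{c4}, and it is precisely where the hypothesis $s\leq 2l$ is essential: for $s\geq 2l+1$ an additional middle factor $\pi^{\infty}_s\mathcal{S}_1(\mathrm{AJ}_{\infty}(\widetilde{H}))$ appears via Theorem~\ref{t2}, yielding the different conclusion in Corollary~\ref{c5}.
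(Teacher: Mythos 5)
Your reduction is the route the paper intends (its proof of this corollary is omitted as being analogous to Corollary~\ref{c4}): Lemma~\ref{l6} and Corollary~\ref{cstrAJ} to pass to $\widetilde{H}(x)=x^{l+1}$, Theorem~\ref{tNAJ} for ${\mathcal{S}}({\mathrm{AJ}}_s(\widetilde{H}))={\mathcal{L}}_l\overline{\circledcirc}{\mathcal{S}}_1({\mathrm{AJ}}_s(\widetilde{H}))$, and Corollary~\ref{t3}~(ii) together with Lemma~\ref{lxx1}~(ii) to identify the normal factor with $\{x+\sum_{j=s-l+1}^{s}c_jx^j\}$; this correctly yields the displayed form of the solutions and the semidirect decomposition, and part (ii) does follow from part (i) by conjugating with $T$. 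The genuine gap is the step you yourself call the main obstacle: the claim that for $s\le 2l$ the surviving cross terms ``reassemble into the $\widehat{\diamond}$-law'' is false. The cross terms in $\Psi(x)^j$ start at degree $2(s-l+1)-1=2s-2l+1$, which is $\le s$ whenever $s\le 2l-1$, and they do not cancel: already for $l=2$, $s=3$ one has $(x+c_2x^2+c_3x^3)\circ(x+d_2x^2+d_3x^3)=x+(c_2+d_2)x^2+(c_3+d_3+2c_2d_2)x^3$, so the coordinate map you propose to check is not a homomorphism onto $(\boldsymbol{E}_l\times\boldsymbol{k}^l,\widehat{\diamond})$; in this case an isomorphism still exists, but only after the quadratic change of coordinates $c_3\mapsto c_3-c_2^2$, which a direct comparison of coefficients will never produce.

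Worse, on part of the stated range the conclusion itself fails, so no verification can be completed. Take $\boldsymbol{k}=\mathbb{Q}$, $l=3$, $s=4$, $H(x)=\widetilde{H}(x)=x^4$ (so $T=\mathrm{id}$). Every $c_1x+c_2x^2+c_3x^3+c_4x^4$ with $c_1\in\boldsymbol{E}_3=\{1\}$ solves $({\mathrm{AJ}}_4(x^4))$, so ${\mathcal{S}}({\mathrm{AJ}}_4(x^4))=\Gamma^4_1$, which is non-commutative: $(x+x^2)\circ(x+x^3)\equiv x+x^2+x^3+2x^4$ while $(x+x^3)\circ(x+x^2)\equiv x+x^2+x^3+3x^4 \pmod{x^5}$; but $(\boldsymbol{E}_3\times\boldsymbol{k}^3,\widehat{\diamond})$ is abelian here. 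In general ${\mathcal{S}}_1^l({\mathrm{AJ}}_s(\widetilde{H}))$ is non-abelian exactly when $2(s-l+1)\le s$, i.e.\ for $l+1\le s\le 2l-2$ (so $l\ge 3$), and a torsion-free nilpotent non-abelian group cannot embed into $(\boldsymbol{E}_l\times\boldsymbol{k}^l,\widehat{\diamond})$, whose subgroups are all virtually abelian; note that Lemma~\ref{lxx1}~(v) and Corollary~\ref{t3}~(ii), on which you lean, drop the same cross terms, so they cannot be used to close this gap. Your plan (and the statement) works only for $s\in\{2l-1,2l\}$: for $s=2l$ all cross terms land beyond $x^{s+1}$ and your coefficient comparison goes through verbatim, and for $s=2l-1$ the single symmetric cross term $lc_ld_lx^{2l-1}$ is absorbed by replacing $c_{2l-1}$ with $c_{2l-1}-\tfrac{l}{2}c_l^2$, a correction which is equivariant under the ${\mathcal{L}}_l$-action and hence compatible with the semidirect structure. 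You should therefore either restrict to $s\ge 2l-1$ (adding the coordinate change in the borderline case) or replace the target group for $s\le 2l-2$.
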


\section{Examples and problems}

One can observe that for $s\in\mathbb{N}$, $s\geq3$ and $l\in|2,s-1|$ the groups ${\mathcal{S}}({\mathrm{AJ}}_s(H))$ of all solutions of $({\mathrm{AJ}}_s(H))$ need not be commutative. Indeed, they are then isomorphic either to $(\boldsymbol{E}_l\times\boldsymbol{k}^{l+1},\overline{\diamond})$ for $s\geq2l+1$ or to $(\boldsymbol{E}_l\times\boldsymbol{k}^l,\widehat{\diamond})$ in the case $s\leq2l$ and these groups need not be commutative.

\begin{example}
Assume that $l+1=s=3$. Clearly $\boldsymbol{E}_2=\{-1,1\}$. Let us consider a family
$$
{\mathcal{F}} =\left\{ \Phi(x)=c_1x+c_2x^2+c_3x^3: c_1\in\boldsymbol{E}_2, c_2,c_3\in\boldsymbol{k}\,\right\}.
$$
We show that ${\mathcal{F}}={\mathcal{S}}({\mathrm{AJ}}_3(H))$ for $H(x)=x^3$. Indeed
$$
\begin{array}{ll}
\displaystyle
\left(\Phi(x)\right)^3=\left(c_1x+c_2x^2+c_3x^3\right)^3=c_1^3x^3 & \mod x^4,\\[1ex]
\displaystyle \frac{\dd \Phi}{\dd x}\cdot H(x)=\left(c_1+2c_2x+3c_2x^2\right)\cdot x^3=c_1x^3 & \mod x^4.
\end{array}
$$
Hence $\Phi(x)\in {\mathcal{S}}({\mathrm{AJ}}_3(H))$ and we see that there are no other solutions. Take now $\Phi_1(x)=c_1x+c_2x^2$, $\Phi_2(x)=-x$ for some $c_1\in\boldsymbol{E}_2$ and $c_2\in\boldsymbol{k}^*$. Then $\Phi_1,\Phi_2\in{\mathcal{S}}({\mathrm{AJ}}_3(H))$ and
$$
\begin{array}{ll}
(\Phi_1\circ \Phi_2)(x)=c_1(-x)+c_2(-x)^2=-c_1x+c_2x^2 & \mod x^4,\\[.5ex]
(\Phi_2\circ \Phi_1)(x)=-(c_1 x+c_2x^2)=-c_1x-c_2x^2 & \mod x^4.
\end{array}
$$
Thus $\Phi_1\circ\Phi_2\neq\Phi_2\circ\Phi_1$, so ${\mathcal{S}}({\mathrm{AJ}}_3(H))$ is not commutative.
\end{example}

\begin{example}
Fix $l,s\in\mathbb{N}$, $l\geq2$ and $l\leq s\leq 2l$. Then on account of Corollary~\ref{c6} the family
$$
{\mathcal{F}}={\mathcal{L}}_l\overline{\ocirc} {\mathcal{S}}_1^l({\mathrm{AJ}}_s(x^l))=\left\{c_1x+\sum_{j=s-l+1}^s c_1c_jx^j:c_1\in\boldsymbol{E}_l, c_{s-l+1},\ldots,c_s\in\boldsymbol{k}\,\right\}
$$
is a group of solutions of $({\mathrm{AJ}}_s(x^{l+1}))$. The group ${\mathcal{F}}$ is clearly isomorphic to the group $(\boldsymbol{E}_l\times\boldsymbol{k}^l,\widehat{\diamond})$ and it is not commutative provided $\{1\}\subsetneq\boldsymbol{E}_l$. Hence we get an example of noncommutative group being a semidirect product of commutative groups.
\end{example}

We construct now a general example. For fixed $l\geq1$ we put
$$
(kl+1)!_{l}:=\prod_{j=0}^k(jl+1).
$$
Define a family $(F^{(l)}_t)_{t\in\boldsymbol{k}}$,
\begin{equation}\label{genex}
F^{(l+1)}_t(x)=x+\sum_{n=1}^{\infty} \frac{((n-1)l+1)!_{l}}{n!}t^nx^{nl+1}\in\Gamma^s_1 \quad\mbox{ for }\,t\in\boldsymbol{k}.
\end{equation}

Applying techniques used in the proof of Theorem~1 in~\cite{Jabl2007a} one can easily prove the following fact.

\begin{fact}
Let $s\in\mathbb{N}\cup\{\infty\}$. The family $(F^{(2)})_{t\in\boldsymbol{k}}$
$$
F^{(2)}_t(x)=x+\sum_{n=2}^{\infty}t^{n-1}x^n \qquad\mbox{ for }\,t\in\boldsymbol{k}
$$
is a one-parameter group of formal power series which means that
$$
F^{(2)}_{t_1+t_2}=F^{(2)}_{t_1}\circ F^{(2)}_{t_2} \qquad\mbox{ for }\,t_1,t_2\in\boldsymbol{k}.
$$
Then
\begin{enumerate}[(i)]
\itemsep-2pt
\item $(F^{(2)}_t)_{t\in\boldsymbol{k}}$ is a commutative group of solutions of the Acz{\'e}l-Jabotinsky formal differential equation $({\mathrm{AJ}}_{\infty}(x^2))$ with the generator $H(x)=x^2$,
\item  for $s\in\mathbb{N}$, $s\geq2$, the family
$$
{\widehat{\mathcal{F}}}=\left\{ \widehat{F}^{(2)}_{t,c}(x)=x+\sum_{n=2}^{s-1}t^{n-1}x^n+\left( c+t^{s-1}\right)x^s:\; t,c\in\boldsymbol{k}\,\right\},
$$
is a commutative group of solutions of the Acz{\'e}l-Jabotinsky formal differential equation $({\mathrm{AJ}}_s(x^2))$ with the generator $H(x)=x^2$ for finite $s\geq3$.
\end{enumerate}
\end{fact}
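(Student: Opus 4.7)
The plan is to exploit the closed-form identity
\begin{equation*}
F^{(2)}_t(x)=\sum_{n=1}^{\infty}t^{n-1}x^n=\frac{x}{1-tx}\in\ps{\boldsymbol{k}}{x},
\end{equation*}
which holds since $1-tx$ is a unit of $\ps{\boldsymbol{k}}{x}$ with inverse $\sum_{n=0}^{\infty}(tx)^n$. From this every claim in the fact reduces to a short rational-function computation. First I would verify the one-parameter group law by substituting and simplifying:
\begin{equation*}
F^{(2)}_{t_1}\bigl(F^{(2)}_{t_2}(x)\bigr)=\frac{x/(1-t_2x)}{1-t_1\cdot x/(1-t_2x)}=\frac{x}{1-(t_1+t_2)x}=F^{(2)}_{t_1+t_2}(x).
\end{equation*}

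For part (i), differentiating gives $\frac{\dd F^{(2)}_t}{\dd x}(x)=(1-tx)^{-2}$, so
\begin{equation*}
\frac{\dd F^{(2)}_t}{\dd x}(x)\cdot x^2=\frac{x^2}{(1-tx)^2}=\bigl(F^{(2)}_t(x)\bigr)^2=H\bigl(F^{(2)}_t(x)\bigr),
\end{equation*}
which is precisely $(\mathrm{AJ}_{\infty}(x^2))$. That $\{F^{(2)}_t:t\in\boldsymbol{k}\}$ is a commutative group then follows from the one-parameter identity: it is the image of the abelian group $(\boldsymbol{k},+)$ under the homomorphism $t\mapsto F^{(2)}_t$ into $(\Gamma^{\infty},\circ)$, and it is closed under inverses since $F^{(2)}_{-t}=(F^{(2)}_t)^{\circ-1}$.

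For part (ii) the key observation is the factorization
\begin{equation*}
\widehat{F}^{(2)}_{t,c}(x)=\bigl(\pi^{\infty}_sF^{(2)}_t\bigr)\circ\bigl(x+cx^s\bigr)\mod x^{s+1},
\end{equation*}
which one establishes by direct substitution, using the triviality $(x+cx^s)^n\equiv x^n\pmod{x^{s+1}}$ valid for all $n\geq 2$. The first factor lies in $\mathcal{S}(\mathrm{AJ}_s(x^2))$ by Lemma~\ref{lx2} applied to part~(i); the second factor lies there too, since $(x+cx^s)^2\equiv x^2\equiv(1+scx^{s-1})\cdot x^2\pmod{x^{s+1}}$. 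Hence by Fact~\ref{f1} the composition $\widehat{F}^{(2)}_{t,c}$ is a solution. The same substitution calculation shows that $\pi^{\infty}_sF^{(2)}_{t'}$ and $x+c'x^s$ commute modulo $x^{s+1}$ for every $t',c'\in\boldsymbol{k}$, which combined with the one-parameter law of part (i) projected via $\pi^{\infty}_s$ yields
\begin{equation*}
\widehat{F}^{(2)}_{t_1,c_1}\circ\widehat{F}^{(2)}_{t_2,c_2}=\widehat{F}^{(2)}_{t_1+t_2,\,c_1+c_2},
\end{equation*}
which at once gives closure of $\widehat{\mathcal{F}}$, existence of inverses (parameters $-t,-c$), and commutativity.

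The only non-mechanical step is spotting the factorization $\widehat{F}^{(2)}_{t,c}=(\pi^{\infty}_sF^{(2)}_t)\circ(x+cx^s)$ in part (ii); once that is in hand, everything else reduces to the geometric-series closed form for $F^{(2)}_t$ and the elementary congruence $(x+cx^s)^n\equiv x^n\pmod{x^{s+1}}$ for $n\geq 2$, with no further appeal to the recursions derived in Sections~\ref{sequal}--\ref{algstruct}.
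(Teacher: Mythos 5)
Your argument is correct, and it is worth noting that the paper does not actually write out a proof of this fact: it only remarks that it can be obtained by the techniques of Theorem~1 of~\cite{Jabl2007a}, i.e.\ by a coefficient-wise verification of the translation equation $F^{(2)}_{t_1+t_2}=F^{(2)}_{t_1}\circ F^{(2)}_{t_2}$ via the substitution formula~\eqref{7} and binomial identities, after which membership in ${\mathcal{S}}({\mathrm{AJ}}_s(x^2))$ still has to be extracted. Your route is self-contained and more elementary: the closed form $F^{(2)}_t(x)=x(1-tx)^{-1}$ (a computation with units of $\ps{k}{x}$, legitimate since $\ord F^{(2)}_{t_2}\geq1$ and $1-t_1F^{(2)}_{t_2}(x)$ is a unit) gives the group law and the identity $(F^{(2)}_t(x))^2=\frac{\dd F^{(2)}_t}{\dd x}(x)\cdot x^2$ in one line each, so part (i) needs no appeal to the recursions of Sections~\ref{sequal}--\ref{algstruct}; and the factorization $\widehat{F}^{(2)}_{t,c}=(\pi^{\infty}_sF^{(2)}_t)\circ(x+cx^s)\modd x^{s+1}$, combined with Lemma~\ref{lx2} and Fact~\ref{f1}, settles part (ii). The resulting composition law $\widehat{F}^{(2)}_{t_1,c_1}\circ\widehat{F}^{(2)}_{t_2,c_2}=\widehat{F}^{(2)}_{t_1+t_2,c_1+c_2}$ yields closure, inverses and commutativity simultaneously, and it exhibits concretely, for $l=1$ and $\widetilde{H}=H=L_1\cdot x^2$ (so $\delta=0$), the decomposition $\pi^{\infty}_s{\mathcal{S}}_1({\mathrm{AJ}}_{\infty}(\widetilde{H}))\circledcirc{\mathcal{S}}_1^l({\mathrm{AJ}}_s(\widetilde{H}))$ of Corollary~\ref{c5}, which the paper obtains only abstractly. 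Two micro-steps deserve an explicit sentence in a final write-up: $(x+c_1x^s)\circ(x+c_2x^s)\equiv x+(c_1+c_2)x^s\modd x^{s+1}$ (the same congruence you already use), and the fact that $\pi^{\infty}_s:\Gamma^{\infty}\to\Gamma^s$ preserves $\circ$, which is what legitimizes projecting the one-parameter law; with these inserted, your derivation of the composition law, and hence of the whole fact, is complete.
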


\begin{remark}
We have thus construct explicit form of the group of solutions of the Acz{\'e}l-Jabotinsky formal differential equation for the generator $H(x)=x^2$.
\end{remark}

\begin{problem}
Is it possible to construct an explicit form of the group of solutions of the Acz{\'e}l-Jabotinsky formal differential equation for the generator $\widetilde{H}(x)=x^{l+1}+\delta x^{2l+1}$, where $\delta\in\boldsymbol{k}^*$ and $l\geq1$?
\end{problem}

\begin{problem}
Is the family $(F^{(l+1)}_t)_{t\in\boldsymbol{k}}$ defined by~\eqref{genex} a one-parameter group of formal power series for arbitrary $l\geq2$?
\end{problem}

\begin{remark}
In the case $l\geq2$ and $l+1\leq s\leq4l$ one can check that $\pi^{\infty}_s\left((F^{(l+1)}_t)_{t\in\boldsymbol{k}}\right)$ is a~one-parametr group of formal power series. We also checked\footnote{using {\em Computer Algebra System {\bf Maxima}}} that for $s\leq 15$ and $l\in\{2,3,4,5\}$ the family $\pi^{\infty}_s\left((F^{(l+1)}_t)_{t\in\boldsymbol{k}}\right)$ is a~one-parameter group of formal power series.
\end{remark}

In particular we get then

\begin{example}
The family
$$
\begin{array}{rcl}
{\mathcal{F}}&=&\displaystyle {\mathcal{L}}_2\overline{\ocirc}{\mathcal{S}}_1( {\mathrm{AJ}}_{15}(x^3))={\mathcal{L}}_2\overline{\ocirc}\left( \pi^{\infty}_{15}\left((F^{(3)}_{c_3})_{c_3\in\boldsymbol{k}}\right)\ocirc {\mathcal{S}}_1^{2}({\mathrm{AJ}}_{15}(x^3))\right)\\[2ex]
&=&\displaystyle  \left\{ c_1x+c_1c_3x^3+\frac{3}{2}c_1c_3^2x^5+ \frac{5}{2}c_1c_3^3x^7+ \right.\\[3ex]
& & \displaystyle  \qquad\quad \frac{35}{8}c_1c_3^4x^9+ \frac{63}{8}c_1c_3^5x^{11}+\frac{231}{16}c_1c_3^6x^{13} +   c_1c_{14}x^{14}+\\[3ex]
& & \displaystyle  \qquad\qquad\qquad \left.c_1\left(\frac{429}{16} c_3^7+c_{15}\right)x^{15}: c_1\in\{-1,1\}, c_3,c_{14},c_{15}\in\boldsymbol{k}\,\right\}
\end{array}
$$
is a non-commutative group of solutions of the Acz{\'e}l-Jabotinsky formal differential equation $({\mathrm{AJ}}_{15}(x^3))$ with the generator $H(x)=x^3$.
\end{example}

\begin{remark}
The coefficients $c_1c_{14}$ and $c_1\left(\frac{429}{16} c_3^7+c_{15}\right)$ for the powers $x^{14}$ and $x^{15}$ can formally be written as $\widetilde{c}_{14}$ and $\widetilde{c}_{15}$ with arbitrary values $\widetilde{c}_{14},\widetilde{c}_{15}\in\boldsymbol{k}$. We write them in the present form to show the structure of the group ${\mathcal{F}}$ as a product of groups.
\end{remark}

\subsection*{Statements and Declarations}

\subsubsection*{Funding}

The research of W. Jab{\l}o{\'n}ski was partially supported by the Faculty of Applied Mathematics AGH UST statutory tasks and dean grant within subsidy of Ministry of Science and Higher Education.

\subsubsection*{Competing Interests}

The authors declare they have no financial interests.

\subsubsection*{Data availability}

Not applicable.


\begin{thebibliography}{99}

%\bibitem{Henrici74}
%Henrici, P., Applied and computational complex analysis, %Vol. I, Power series - integration - conformal mapping - %location of zeros, John Wiley \& Sons, New York-London-%Sydney-Toronto 1974

%\bibitem{Hille76}
%Hille, E., {\em Ordinary differential equations in the %complex domain}, Pure \& Aapplied Mathematics, A %Wiley-Interscience Publication, John Wiley \& Sons, New York %London Sydney Toronte 1976.

%\bibitem{Grobner67}
%Gr{\"o}bner, W., {\em Die Lie-Reihen und ihre Anwendungen},
%Deutscher Verl. d. Wiss. Berlin 1967.

%\bibitem{Jablonski97}
%W. Jab{\l}o{\'n}ski, {\em On some subsemigroups of the %group $L_s^1$}, Rocznik Nauk.--Dydak. WSP w~Krakowie %14/189 (1997), 101-119.

%\bibitem{Jablonski99}
%Jab{\l}o{\'n}ski, W., {\em On extensibility of some %homomorphisms}, Rocznik Nauk.--Dydak. WSP w~Krakowie %16/207 (1999), 35-43.

%\bibitem{Jablonski2007a}
%Jab{\l}o{\'n}ki, W., {\em On existence of subsemigroups %of the group $L_s^1$ and extensibility of homomorphisms %into         $L_s^1$}, Aequationes Math. 74 (2007), %219-225.

%\bibitem{Jablonski2007b}
%Jab{\l}o{\'n}ski, W., {\em Homomorphisms of the group $L_s^1$ into the group $L_r^1$ for $r\leq3$}, Aequationes %Math. 74 (2007), 287-309.

\bibitem{JablReich2005}
W. Jab{\l}o{\'n}ski, L. Reich, {\em On the solutions of the translation equation in rings of formal power series}, Abh. Math. Sem. Univ. Hamburg 75 (2005), 179-201.

\bibitem{JablReich2005a}
W. Jab{\l}o{\'n}ski, L. Reich, {\em On the form of homomorphisms into the differential group $L_s^1$ and their extensibility}, Result. Math. 47 (2005), 61-68.

\bibitem{Jabl2007a}
Wojciech Jabłoński, {\em On existence of subsemigroups of the group $L_s^1$ and extensibility of homomorphisms into $L_s^1$}, Aequationes Math. 74 (2007), 219--225.

\bibitem{JablReich2007}
W. Jab{\l}o{\'n}ski, L. Reich, {\em On the standard form of the solution of the translation equation in rings of formal power series}, Math. Panon. 18/2 (2007), 169-187.

\bibitem{JablReich2008}
W. Jab{\l}o{\'n}ski, L. Reich, {\em On homomorphisms of an abelian group into the group of invertible formal power series}, Publ. Math. Debrecen 73/1-2 (2008), 25-47.

%\bibitem{La}
%Lang, S., Algebra, Addison-Wesley Publishing Company 1965.

\bibitem{Reich1}
L. Reich, {\em On a differential equaion arrising in iteration theory in rings of formal power series in one variable}, in: Lecture Notes in Mathematics 1163. Iteration Theory and its Functional Equations, Springer-Verlag Berlin Heidelberg New York Tokyo 1985.

\bibitem{Reich2}
L. Reich, {\em On Families of Commuting Formal Power Series}, Selected Topics in Functional Equations. Proceedings of a seminar held in Graz, May 22-23, 1986. Grazer Math. Ber. 285-296 (1988), Ber. Nr. 294.

%\bibitem{ReichSchwaiger75}
%Reich, L., Schwaiger, J., {\em {\"U}ber analytische %Iterierbatkeit formaler Potenzreihenvektoren}, %{\"O}sterreich. Akad. Wiss. Math.-Naturwiss. Kl. S.-B. II %{\bf 184} (1975), no. 8-10, 599-617.

%\bibitem{ReichSchwaiger77}
%Reich, L., Schwaiger, J., {\em {\"U}ber einen Satz von Shl.
%Sternberg in der Theorie der analytischen Iterationen}, %Monatshefte f{\"u}r Mathematik 83 (1977), 207-221.

%\bibitem{Sch}
%Scheinberg, S., {\em Power series in one variable}, J. Math. %Anal. Appl.
\end{thebibliography}
\end{document}